\newcolumntype{.}{D{.}{.}{1.3}}
\newcommand\dashrule{\leavevmode\xleaders\hbox{-}\hfill\kern0pt}
\def\argmin{\operatornamewithlimits{arg\,min}}
\def\diag{\operatorname{diag}}
\newcommand{\ba}{{\bvec{a}}}
\newcommand{\bb}{\bvec{b}}
\newcommand{\bc}{\bvec{c}}
\newcommand{\bg}{\bvec{g}}
\newcommand{\bs}{\bvec{s}}
\newcommand{\bt}{\bvec{t}}
\newcommand{\bu}{\bvec{u}}
\newcommand{\bv}{\bvec{v}}
\newcommand{\bx}{\bvec{x}}
\newcommand{\by}{\bvec{y}}
\newcommand{\bz}{\bvec{z}}
\newcommand{\bA}{{\bf A}}
\newcommand{\bB}{{\bf B}}
\newcommand{\bD}{{\bf D}}
\newcommand{\bF}{{\bf F}}
\newcommand{\bG}{{\bf G}}
\newcommand{\bH}{{\bf H}}
\newcommand{\bI}{{\bf I}}
\newcommand{\bL}{{\bf L}}
\newcommand{\bN}{{\bf N}}
\newcommand{\bQ}{{\bf Q}}
\newcommand{\bS}{{\bf S}}
\newcommand{\bT}{{\bf T}}
\newcommand{\bU}{{\bf U}}
\newcommand{\bV}{{\bf V}}
\newcommand{\bX}{{\bf X}}
\newcommand{\bY}{{\bf Y}}
\newcommand{\bZ}{{\bf Z}}
\newcommand{\calI}{{\mathcal{I}}}
\newcommand{\calL}{{\mathcal{L}}}
\newcommand{\balpha}{\mbox{\boldmath $\alpha$}}
\newcommand{\bsigma}{\mbox{\boldmath $\sigma$}}
\newcommand{\bPi}{\mbox{\boldmath $\Pi$}}
\newcommand{\1}{\mbox{\boldmath $1$}}
\newcommand{\0}{\mbox{\boldmath $0$}}
\newcommand{\be}{\begin{eqnarray}}
\newcommand{\ee}{\end{eqnarray}}
\newcommand{\matrixb}{\left[ \begin{array}}
\newcommand{\matrixe}{\end{array} \right]}
\newcommand{\tr}{\mathop{\rm tr}\nolimits}
\def\*{\circledast}
\newcommand{\bvec}[1]{\boldsymbol{#1}}
\newcommand{\ve}{\bvec{e}}
\def\vectorize{\operatorname{vec}}
\newcommand{\vtr}[1]{\vectorize\hspace{-.3ex}\left(#1\right)}
\newcommand{\tensor}[1]{\boldsymbol{\mathscr{\MakeUppercase{#1}}}} 
\newcommand{\tG}{\tensor{G}}
\newcommand{\tX}{\tensor{X}}
\newcommand{\tY}{\tensor{Y}}
\renewcommand{\ltimes}{\mathlarger{\mathlarger{\ltimes}}}
\newsavebox{\@brx}
\newcommand{\llangle}[1][]{\savebox{\@brx}{\(\m@th{#1\langle}\)}%
  \mathopen{\copy\@brx\kern-0.5\wd\@brx\usebox{\@brx}}}
\newcommand{\rrangle}[1][]{\savebox{\@brx}{\(\m@th{#1\rangle}\)}%
  \mathclose{\copy\@brx\kern-0.5\wd\@brx\usebox{\@brx}}}
\begin{document}
\sloppy
\title{Quadratic Programming Over Ellipsoids}
\subtitle{with Applications to Constrained Linear Regression and Tensor Decomposition}


\author{Anh-Huy Phan
\and Masao Yamagishi 
\and Danilo Mandic
\and Andrzej Cichocki
}


\institute{A.-H. Phan and A. Cichocki \at Lab for Advanced Brain Signal Processing, Brain Science Institute, RIKEN, Wakoshi, Japan \\
\email{(phan,cia)@brain.riken.jp} \\
{A. Cichocki \at Skolkovo Institute of Science and Technology (Skoltech), Russia}\\
M.Yamagishi	\at Tokyo Institute of Technology, Japan\\
\email{myamagi@sp.ce.titech.ac.jp} 
{D. Mandic \at  Imperial College, London, United Kingdom}\\
\email{d.mandic@imperial.ac.uk} 
}

%
\date{Received: date / Accepted: date}

\maketitle

\begin{abstract}
A novel algorithm to solve the quadratic programming problem over ellipsoids is proposed. This is achieved by splitting the problem into two optimisation sub-problems, quadratic programming over a sphere and orthogonal projection. Next, an augmented-Lagrangian algorithm is developed for this multiple constraint optimisation. Benefit from the fact that the QP over a single sphere can be solved in a closed form by solving a secular equation \cite{GANDER1989815} and \cite{Hager2001}, we derive a tighter bound of the minimiser of the secular equation. 
We also propose to generate a new psd matrix with a low condition number from the matrices in the quadratic constraints. This correction method improves convergence of the proposed augmented-Lagrangian algorithm.
Finally, applications of the quadratically constrained QP to bounded linear regression and tensor decompositions are presented. 
\end{abstract}

\section{Introduction}

Quadratic programming over a single sphere is one of basic optimisation problems and has been extensively studied. 
For example, the problem was first considered as an eigenvalue problem with linear constraints $\bN^T \bx = \bt$
 by Gander, Golub and Matt \cite{GANDER1989815}. After eliminating the linear constraint, the constrained eigenvalue problem becomes a QP problem over a single sphere. The authors solved a secular equation using an iterative algorithm which starts from an initial point determined based on the eigenvalue of the quadratic term.

A similar study was presented by Hager as a minimization of a quadratic function over a sphere \cite{Hager2001}. Hager also considered solving a rational function. 
Rojas, Santos and Sorensen \cite{Rojas:2008:ALM:1326548.1326553} developed a trust-region algorithm which can be applied to this problem.
Some extensions to solving large-scale problems were proposed in \cite{Sorensen:1997:MLS,DBLP:journals/mp/RendlW97}.

The spherically constrained QP (SCQP) problem was reinvented many times. In \cite{ChenGao2013}, Chen and Gao presented a globally optimal solution to the QP with a variable vector constrained inside a ball. They formulated the problem as one-dimensional canonical duality problem and proposed associated numerical algorithms.

For this particular problem, by considering the variable vector in the Stiefel manifold, we can also apply optimization algorithms on a manifold to solve this problem, e.g., using the ManOpt toolbox \cite{manopt} or  the Cran-Nicholson update scheme\cite{Zaiwen_2012}.

\begin{figure*}
\centering
\includegraphics[width=.71\linewidth, trim = 0.0cm .0cm 0cm 0cm,clip=true]{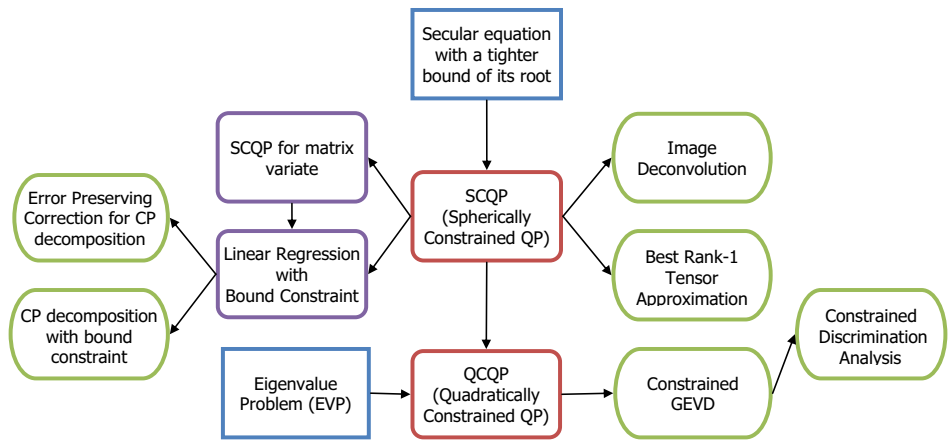}
\caption{Flow chart of the Spherically Constrained QP and Quadratically Constrained QP and their novel applications presented in this paper.}
\label{fig_flowchart}
\end{figure*}

A more sophisticated problem is that of minimizing a convex quadratic function over an intersection of ellipsoids $\bx^T \, \bH_m \, \bx = 1$, 
bearing in mind that quadratic equalities characterize non-convex sets. The non-convex quadratic optimisation problem with quadratic equality constraints is known to be NP-hard \cite{Linderoth2005,DBLP:journals/siamjo/ArimaKK13,Burer2014}.
%
%
%
%
Nevertheless, since gradients and Hessians of the constrained functions can be derived in an analytical form, the problem can be solved using interior-point algorithms for nonlinearly constrained minimization\cite{Holmstršm97tomlab}. Alternatively, one can cast a convex quadratic and quadratically constrained optimization problem into a conic optimization problem which can be solved efficiently, e.g, using the Mosek optimisation toolbox\cite{mosek}. 
The QP problem with quadratic inequality constraints can also be solved efficiently using the Modified proportioning with gradient projections \cite{Dostl:2009:OQP:1593031,Dostal2012}. 
Some other common approaches are to convexify the problem using 
semidefinite relaxation techniques \cite{DBLP:journals/jacm/GoemansW95}, second-order cone programming\cite{Kim00secondorder}, or mixed SOCP-SDP relaxations \cite{Burer2014}.

For some particular cases, e.g., a quadratic function with two quadratic constraints, \cite{Ben-Tal1996} shows that under a suitable assumption, the problem can be solved in polynomial time. Similarly, with some simple convex relaxations the solution can even return the optimal values  \cite{Locatelli:2015:RQP:2902389.2902762}.

In this paper, we develop algorithms for QP with quadratic constraints $\bx^T \bH_m \, \bx = 1$, and present novel applications of this optimisation. First, we consider the simple QP over a sphere. In the same spirit as Gander, Golub and Matt \cite{GANDER1989815}, Hager \cite{Hager2001}, we solve the problem by finding a root of a secular equation.
Normalisation and conversion methods are introduced to simplify the problem to the one with a smaller number of parameters, when the vector in the linear term comprises zero-entries, or when the matrix in the quadratic term has identical eigenvalues.
The conversion is particularly useful for the SCQP for matrix variate in Section~\ref{sec:sqp_matrixvariate}.
We show that the solution to such a constrained QP problem can be deduced from a minimiser of a much smaller similar QP for a vector variate.
For the ordinary SCQP, we present new results for finding good bounds of the minimiser. To this end, we perform a slightly different normalisation to that in \cite{GANDER1989815} and \cite{Hager2001}. With this new bound, we can even find a good estimate to the global minimiser through solving a truncated problem with a few terms. It is shown that the solution can be found in closed form for some particular cases without resorting iterative algorithms.

In Section~\ref{sec:regression}, we present the linear regression with a bound constraint and formulate it as an equivalent SCQP. 
This problem has applications in deriving the norm correction method for the CANDECOMP/PARAFAC tensor decomposition (CPD), and for developing the algorithm for the bounded CPD\cite{Phan_CPnormcorrection}.  

In Section~\ref{sec:qpqc}, we will present an algorithm to solve the Quadratic programming over elliptic constraints. The problem with multiple constraints is split into two optimisation sub-problems, one is the quadratic programming over a sphere, and the other being the orthogonal projection. An augmented-Lagrangian algorithm is next developed for this problem. We suggest generating a new psd matrix with a low condition number from the matrices in the quadratic constraints. This correction method is proved to improve convergence of the proposed augmented-Lagrangian algorithm.

We present novel applications of the quadratic programming over a sphere to tensor decompositions, including finding a best rank-1 tensor approximation to symmetric tensors of order-4 \cite{7952616}, and constrained discrimination analysis. 

In Section~\ref{sec:TT_geig}, we introduce constrained generalized eigenvalue decomposition, in which eigenvectors impose low-rank structures. The problem is then converted to sub-problems related to the ordinary GEVD and the QP over multiple quadratic constraints. Throughout the paper, we provide many examples, including image deconvolution, best rank-1 tensor approximation and image classification, to verify and illustrate our algorithms. 
In addition, a flow chart  in Fig.~\ref{fig_flowchart} summarises the studied methods and their applications.

\section{Quadratic Programming Over A Single Sphere}

Consider a quadratic programming problem with a constraint that the variable vector is on a sphere, i.e., unit-length vector. 

\begin{definition}[Quadratic Programming over a Single Sphere]\label{prob_qps}
Given a positive semi-definite matrix $\bQ$ of size $K \times K$  and a  vector $\bb$ of length $K$, the quadratic programming over a sphere solves the optimisation problem 
\be
&\min  \quad  &\frac{1}{2} \, \bx^T \, \bQ \bx + \bb^T \bx  \label{equ_qp_sphere} \, ,\quad\mathrm{{s.t.}}  \quad   \bx^T \bx = 1  \, .
\ee
\end{definition}

For the case when $\bb$ is a zero vector, the problem (\ref{equ_qp_sphere}) becomes that of finding the smallest eigenvectors of the matrix $\bQ$. Here, we do not consider this case. In addition, the matrix $\bQ$ only needs to be symmetric so that the positive semi-definite condition on matrix $\bQ$ can be relaxed.
We first show that the QP in (\ref{equ_qp_sphere}) can be converted to a problem whereby the matrix $\bQ$ is diagonal, and has positive eigenvalues. Then, we simplify the optimization task to that with distinct eigenvalues and non-zero entries $\bb$.

\subsection{Normalisation, reparameterization and simplication}

We shall denote the eigenvalue decomposition of the matrix $\bQ$ in (\ref{equ_qp_sphere}) by
\be 
\bQ = \bU \, \diag(\bsigma) \bU^T \notag 
\ee
where $\bsigma = [0 \le \sigma_1 \le \sigma_2 \le \cdots \le \sigma_K]$ comprises the eigenvalues of $\bQ$, $\bU$ is an orthonormal matrix of size $K\times K$, which consists of eigen-vectors of $\bQ$. 

Since the vector $\bb$ is non-zero, we can perform the following normalisation and reparameterization 
\be
	{\tilde\bx} &= &\bU^T \, \bx    \, , \quad  
	\bc  =  \bU^T \frac{\bb}{\|\bb\|} \,, \notag\\
	\bs &=& [s_1, \ldots, s_K]^T  \, ,  \quad  s_k = \frac{\sigma_k - \sigma_1}{\|\bb\|} + 1  \,  \notag
\ee
so that ${\tilde\bx}$ and $\bc$ are unit-length vectors, 
 ${\tilde\bx}^T {\tilde\bx} = 1$ and $\bc^T \bc = 1$, and $s_1 = 1 \le s_2 \le \cdots \le s_K$.
Hence, the optimal solution $\bx$ to the QP problem in (\ref{equ_qp_sphere}) can be derived from the following QP
\be
&\min  \quad  & \frac{1}{2} \, {\tilde\bx}^T \, \diag({\bs})  \, {\tilde\bx} + \bc^T {\tilde\bx}\label{equ_qp_sphere2} , \quad
\text{s.t.}  \quad   {\tilde\bx}^T {\tilde\bx} = 1  \,,
\ee
where $\bc^T \bc = 1$ and $\bs = [s_1 = 1 \le s_2 \le \cdots \le s_K]$.

We next show that the problem (\ref{equ_qp_sphere2}) can be simplified to the case with distinct eigenvalues, i.e., $s_1 < s_2 < \cdots < s_K$. 

We shall denote by $J$ the number of distinct eigenvalues, $\tilde{\bs} = [\tilde{s}_1 = 1 < \tilde{s}_2 < \cdots < \tilde{s}_{J}]$, over a set of $K$ eigenvalues, $s_k$,  in (\ref{equ_qp_sphere2}), and classify $\bc = [{\bc}_{1}, {\bc}_{2}, \ldots, {\bc}_{J}]$ into $J$ sub-vectors,  whereby each ${\bc}_{j}$ consists of entries $c_{k}$ such that $s_k = \tilde{s}_j$, i.e., $\bc_j = [c_{k \in \calI_j}]$, where $\calI_j = \{k:  s_k = \tilde{s}_j\}$. In addition, we shall define a vector 
\be
\tilde{\bc}  = [\|\bc_1\|, \|\bc_2\|, \ldots, \|\bc_J\|]\,.\notag
\ee
Then, the following relation holds, the proof of which is provided in Appendix~\ref{sec:proof_lem_sqp_simplify}.
\begin{lemma}\label{lem_sqp_simplify}
The minimiser to (\ref{equ_qp_sphere2}) can be deduced from the minimiser to an SCQP with distinct eigenvalues, that is 
\be
&\min  \quad  & \frac{1}{2} \, \bz^T \, \diag(\tilde{\bs} )  \, {\bz} + \tilde\bc^T {\bz}  \quad 
\text{subject to}  \quad   {\bz}^T {\bz} = 1 \notag \,,
\ee
as 
$\displaystyle 
\bx_{\calI_j} = \frac{z_j}{\|\bc_j\|}  \, \bc_j$, 
for a non zero vector $\bc_j$, or an arbitrary vector on the ball $\|\bx_{\calI_j}\|^2 =  z_j^2$ for a zero $\bc_j$, $j = 1, \ldots, J$. 
\end{lemma}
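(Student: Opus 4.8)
The plan is to exploit the block structure created by the repeated eigenvalues so as to collapse each eigenspace to a single coordinate. First I would regroup the objective of (\ref{equ_qp_sphere2}) according to the index sets $\calI_j$. Writing $\bx_{\calI_j}$ for the subvector of $\tilde\bx$ supported on $\calI_j$ and using $s_k = \tilde{s}_j$ for every $k \in \calI_j$, the quadratic part equals $\frac{1}{2}\sum_{j=1}^{J} \tilde{s}_j \, \|\bx_{\calI_j}\|^2$, the linear part equals $\sum_{j=1}^{J} \bc_j^T \bx_{\calI_j}$, and the constraint $\tilde\bx^T \tilde\bx = 1$ becomes $\sum_{j=1}^{J} \|\bx_{\calI_j}\|^2 = 1$. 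Hence the value of the objective and the feasibility of $\tilde\bx$ depend on each block $\calI_j$ only through the two scalars $\|\bx_{\calI_j}\|$ and $\bc_j^T \bx_{\calI_j}$. This observation is what makes the reduction to one variable $z_j$ per group possible.

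Next I would establish that the optimal value of (\ref{equ_qp_sphere2}) coincides with that of the reduced SCQP in $\bz$, by proving two matching inequalities. For the first, given any feasible $\bz$ with $\bz^T\bz = 1$, I set $\bx_{\calI_j} = (z_j/\|\bc_j\|)\,\bc_j$ whenever $\bc_j \neq \mathbf{0}$ and pick any $\bx_{\calI_j}$ with $\|\bx_{\calI_j}\| = |z_j|$ when $\bc_j = \mathbf{0}$. Then $\|\bx_{\calI_j}\|^2 = z_j^2$, so $\tilde\bx$ is feasible, and $\bc_j^T \bx_{\calI_j} = z_j\,\|\bc_j\| = \tilde{c}_j\, z_j$ (which also holds trivially when $\bc_j = \mathbf{0}$). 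Substituting into the regrouped objective shows that $\tilde\bx$ attains exactly $\frac{1}{2}\bz^T\diag(\tilde\bs)\bz + \tilde\bc^T \bz$, so the minimum of (\ref{equ_qp_sphere2}) is at most that of the reduced problem.

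For the reverse inequality I would use Cauchy--Schwarz within each block. Given any feasible $\tilde\bx$, put $r_j = \|\bx_{\calI_j}\|$ and define $z_j = -r_j$, so that $\bz^T\bz = \sum_j r_j^2 = 1$ is feasible for the reduced problem and $z_j^2 = r_j^2$. Since $\bc_j^T \bx_{\calI_j} \ge -\|\bc_j\|\,\|\bx_{\calI_j}\| = \tilde{c}_j\, z_j$, the regrouped objective of $\tilde\bx$ is bounded below by $\frac{1}{2}\sum_j \tilde{s}_j z_j^2 + \sum_j \tilde{c}_j z_j = \frac{1}{2}\bz^T\diag(\tilde\bs)\bz + \tilde\bc^T\bz$, i.e. by the reduced objective at $\bz$. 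Hence the minimum of the reduced problem is at most that of (\ref{equ_qp_sphere2}). Combining the two inequalities gives equality of the optimal values, and applying the construction of the first step to a minimiser $\bz^\star$ of the reduced problem yields a minimiser $\tilde\bx$ of (\ref{equ_qp_sphere2}) of the stated form; the answer $\bx$ to (\ref{equ_qp_sphere}) is then recovered through $\bx = \bU\tilde\bx$.

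The argument itself is short; the part that needs care is the bookkeeping of signs and the degenerate blocks. Concretely, the construction in the forward step must reproduce the objective for \emph{every} feasible $\bz$, including negative $z_j$, which is what forces the reduced variable to range over the whole sphere rather than a nonnegative cone, and the $\bc_j = \mathbf{0}$ case must be handled separately since the direction of $\bx_{\calI_j}$ is then unconstrained — precisely the ``arbitrary vector on the ball'' clause in the statement. I expect this case analysis, rather than any analytic difficulty, to be the main thing to get right.
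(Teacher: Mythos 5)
Your proof is correct and rests on the same core observation as the paper's: within each block of identical eigenvalues the objective depends on $\bx_{\calI_j}$ only through $\|\bx_{\calI_j}\|$ and $\bc_j^T\bx_{\calI_j}$, so Cauchy--Schwarz (equivalently, minimising a linear form over a sphere of fixed radius) collapses each block to a single scalar. If anything, your two-sided comparison of optimal values is tidier and more complete than the paper's appendix, which writes out only the ``simple case'' of one group of repeated leading eigenvalues and argues by partial minimisation at a fixed block norm $d$.
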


In the sequel, Sections~\ref{sec:cn_zeros}-\ref{sec:c1_zero} show that for most cases with zero entries  $\tilde{c}_j = 0$, e.g., when $j> 1$, the optimal $z_j^{\star}$ is zero. Hence, $\tilde{\bx}_{\calI_j}$ is also a zero vector. 

Next, we consider the case when the entries of the vector $\bc$ are nonzero. The case with zero entries $c_k$ can be deduced from the former case.  

\subsection{The case when all $c_k$ are non-zeros}\label{sec::nonzero_cn}

The Lagrangian function of the problem in (\ref{equ_qp_sphere2}) is given by
\be
\calL({\tilde\bx}, \lambda) = \frac{1}{2} \, {\tilde\bx}^T \, \diag({\bs})  \, {\tilde\bx} + \bc^T {\tilde\bx}\ - \frac{1}{2} \lambda ({\tilde\bx}^T {\tilde\bx} - 1)\, .	\notag
\ee
Following the first-order optimality condition, there exists a Lagrange  multiplier $\lambda$ such that 
\be
\frac{\partial \calL({\tilde\bx}, \lambda)}{\partial {\tilde\bx} } = (\diag(\bs)  - \lambda \bI) {\tilde\bx}   + \bc = \0. \label{equ_1st_opt_condtion}
\ee
Since $c_k$ are non-zeros, the multiplier $\lambda$ must not be any $s_k$, i.e., $\lambda \neq s_k$ for $k =1, \ldots, K$, thus implying that the minimiser ${\tilde\bx}^{\star}$ can be expressed as 
\be
{\tilde\bx}^{\star} 
&=& \left[ \frac{c_1}{\lambda - s_1}, \ldots, \frac{c_K}{\lambda - s_K} \right]	\notag
\ee
and the Lagrangian function at ${\tilde\bx}^{\star}$ is given by 
\be
\calL({\tilde\bx}^{\star}, \lambda)  
&=&   \frac{1}{2} \sum_{k = 1}^{K}  \frac{c_k^2 \, s_k}{(\lambda - s_k)^2}   + \sum_{k = 1}^{K}  \frac{c_k^2 }{\lambda - s_k} -  
\frac{\lambda}{2} \sum_{k = 1}^{K}  \frac{c_k^2}{(\lambda - s_k)^2}    + \frac{\lambda}{2}  \notag\\
&=& 
\frac{\lambda}{2}  + \frac{1}{2} \sum_{k = 1}^{K}  \frac{c_k^2}{\lambda - s_k} \,. \notag
\ee
This leads to finding a root $\lambda$ of the first derivative $f^{\prime}(\lambda)$, as
\be
f^{\prime}(\lambda) = 1 - \sum_{k = 1}^{K} \frac{c_k^2}{(\lambda - s_k)^2}\, , \label{equ_lambda_zero}
\ee
which minimises the following function 
\be
\min_{\lambda}   \quad f(\lambda) = \lambda  +  \sum_{k = 1}^{K}  \frac{c_k^2}{\lambda - s_k} \, \label{func_lambda} \,.
\ee
The \emph{secular equation} in (\ref{equ_lambda_zero}) is in a similar form to those derived in \cite{GANDER1989815} and \cite{Hager2001}, but here, the coefficients $c_k$ are with an additional constraint $\bc^T \bc = 1$ and $s_1 = 1$. This constraint will later help to derive a tighter bound for the roots of $f^{\prime}(\lambda) = 0$.

We will next show that the minimiser $\lambda^{\star}$ is a minimum root of $f^{\prime}(\lambda)$. To this end, we first illustrate that $f^{\prime}(\lambda)$ has a root less than $s_1 = 1$, and prove that this root is the global minimiser to $f(\lambda)$.

\begin{lemma}\label{lem_root_lessthan1}
The first derivative of $f(\lambda)$ in (\ref{func_lambda}) has only one root $\lambda < s_1 = 1$, which lies in the interval $(0, 1-|c_1|)$.
\end{lemma}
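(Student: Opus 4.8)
The plan is to analyze the sign behavior of
$f'(\lambda) = 1 - \sum_{k=1}^{K} c_k^2/(\lambda - s_k)^2$
on the half-line $\lambda < s_1 = 1$, where every denominator is strictly positive. I would write $f'(\lambda) = 1 - g(\lambda)$ with $g(\lambda) = \sum_{k} c_k^2/(\lambda - s_k)^2$, so that the roots of $f'$ lying below $1$ correspond exactly to the solutions of $g(\lambda) = 1$ on that interval.

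First I would establish existence and uniqueness of a root in $(-\infty, 1)$. Differentiating, $g'(\lambda) = -2\sum_k c_k^2/(\lambda - s_k)^3$; for $\lambda < s_1 \le s_k$ each cube $(\lambda - s_k)^3$ is negative, so every summand is positive and $g$ is strictly increasing on $(-\infty, 1)$. Since $g(\lambda) \to 0$ as $\lambda \to -\infty$ and $g(\lambda) \to +\infty$ as $\lambda \to 1^-$ (the term $k=1$ blows up because $c_1 \neq 0$, while all terms are nonnegative), the equation $g(\lambda) = 1$ has exactly one solution $\lambda^{\star} < 1$. Equivalently, $f'$ decreases strictly from $+1$ toward $-\infty$ and vanishes precisely once.

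Next I would localize $\lambda^{\star}$ by evaluating $f'$ at the two endpoints of the claimed interval. At $\lambda = 1 - |c_1|$ one has $(\lambda - s_1)^2 = c_1^2$, so the $k=1$ term of $g$ equals exactly $1$; as all remaining terms are strictly positive, $g(1-|c_1|) > 1$, hence $f'(1-|c_1|) < 0$. At $\lambda = 0$ we get $f'(0) = 1 - \sum_k c_k^2/s_k^2$, and using $s_1 = 1$ together with $s_k \ge 1$ and the normalization $\sum_k c_k^2 = 1$, each ratio satisfies $c_k^2/s_k^2 \le c_k^2$, so $f'(0) \ge 0$. Since $f'$ is strictly monotone and changes sign between these two points, the root $\lambda^{\star}$ is forced into $(0, 1-|c_1|)$.

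The one delicate point — and the main obstacle — is upgrading $f'(0) \ge 0$ to the strict inequality $f'(0) > 0$, which is what actually places $\lambda^{\star}$ strictly to the right of $0$. Equality would require $c_k^2/s_k^2 = c_k^2$ for every $k$, i.e.\ $s_k = 1$ whenever $c_k \neq 0$; but in the present case all $c_k$ are nonzero and the eigenvalues are distinct with at least two values, so $s_K > s_1 = 1$ contributes a strict loss and $f'(0) > 0$. The same nondegeneracy gives $c_1^2 < 1$, hence $|c_1| < 1$ and $1 - |c_1| > 0$, so the interval $(0, 1-|c_1|)$ is nonempty and the localization is meaningful; the remaining estimates are routine sign checks.
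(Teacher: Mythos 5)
Your proof is correct and follows essentially the same route as the paper's: strict monotonicity of $f'$ on $(-\infty,s_1)$ via the sign of $f''$, combined with the endpoint evaluations $f'(0)\ge 0$ and $f'(1-|c_1|)\le 0$. The only difference is that you are slightly more careful than the paper about upgrading $f'(0)\ge 0$ to a strict inequality (the paper's own proof only concludes the root lies in $[0,1-|c_1|)$, relying implicitly on the nondegeneracy $s_K>1$ that you make explicit), which is a welcome refinement rather than a different approach.
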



\begin{lemma}\label{lem_minimumroot}
The solution to the problem in (\ref{func_lambda}) is the minimum root of the first derivative of $f(\lambda)$.
\end{lemma}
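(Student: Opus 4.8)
The plan is to read the statement as the claim that the Lagrange multiplier attached to the \emph{global} minimiser of the SCQP \eqref{equ_qp_sphere2} is the smallest root of $f'$; equivalently, that among all roots of $f'$ (each of which yields a KKT point) the smallest one gives the least value of $f$. The starting observation is that any root $\mu$ of $f'$ produces a feasible point $\tilde\bx^{(\mu)}$ with entries $\tilde x^{(\mu)}_k = c_k/(\mu-s_k)$: these are well defined because $c_k\neq 0$ forces $\mu\neq s_k$, and $f'(\mu)=0$ is exactly the constraint $\|\tilde\bx^{(\mu)}\|^2=1$. At such a point the objective equals $\tfrac12 f(\mu)$, as already computed for $\calL(\tilde\bx^{\star},\lambda)$. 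Hence it suffices to single out which root is globally optimal and to show it is the minimum one.

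The heart of the argument is a global-optimality certificate. By Lemma~\ref{lem_root_lessthan1} there is a unique root $\lambda^\star$ with $\lambda^\star<s_1=1$; since every other root of $f'$ lies in one of the intervals $(s_j,s_{j+1})$ or $(s_K,\infty)$, all remaining roots exceed $s_1$, so $\lambda^\star$ is indeed the minimum root. Because $\lambda^\star<s_1\le s_k$ for every $k$, the shifted Hessian $\diag(\bs)-\lambda^\star\bI$ is positive definite. I would therefore introduce the shifted quadratic
\[
g(\tilde\bx)=\tfrac12\,\tilde\bx^T\diag(\bs)\,\tilde\bx+\bc^T\tilde\bx-\tfrac{\lambda^\star}{2}\bigl(\tilde\bx^T\tilde\bx-1\bigr),
\]
whose Hessian is $\diag(\bs)-\lambda^\star\bI\succ\0$, so $g$ is strictly convex. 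The first-order condition \eqref{equ_1st_opt_condtion} at $\lambda^\star$ states precisely that $\nabla g(\tilde\bx^{\star})=\0$, so $\tilde\bx^{\star}$ is the unique unconstrained global minimiser of $g$. On the feasible sphere $\tilde\bx^T\tilde\bx=1$ the penalty term vanishes and $g$ coincides with the SCQP objective; consequently $\tilde\bx^{\star}$ minimises the objective over the whole sphere, i.e.\ it is the global minimiser.

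To close the loop, compactness of the sphere guarantees that the SCQP attains a global minimum, which is a KKT point and hence corresponds to some root $\mu$ of $f'$ with value $\tfrac12 f(\mu)$; comparing with $\tilde\bx^{\star}$ gives $f(\lambda^\star)\le f(\mu)$ for every root $\mu$, so the smallest root $\lambda^\star$ realises the minimum, as claimed. The main obstacle is conceptual rather than computational: $f$ is unbounded below on $\mathbb R$ (it tends to $-\infty$ at both ends of $(-\infty,s_1)$ and near every pole), so ``minimising $f$'' cannot be taken literally over $\mathbb R$ and must be restricted to the roots of $f'$. The convexity certificate $\diag(\bs)-\lambda^\star\bI\succ\0$ is exactly what selects the branch $\lambda<s_1$ and lets me bypass a direct and messy comparison of $f$ at all the interior roots; strict convexity additionally secures uniqueness of $\tilde\bx^{\star}$, which is the one subsidiary point I would verify carefully.
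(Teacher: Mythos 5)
Your proof is correct, but it takes a genuinely different route from the paper's. The paper argues by direct pairwise comparison of the values of $f$ at two roots: taking the root $\lambda_1<s_1$ and any other root $\lambda_2>s_1$, it writes $f(\lambda_2)-f(\lambda_1)=(\lambda_2-\lambda_1)\bigl(1-\sum_k\frac{|c_k|}{s_k-\lambda_1}\frac{|c_k|}{s_k-\lambda_2}\bigr)$ and bounds the sum by Cauchy--Schwarz, using the normalisation $\sum_k c_k^2/(\lambda_i-s_k)^2=1$ that holds at every root of $f^{\prime}$; strictness comes from the failure of the Cauchy--Schwarz equality case because $s_1-\lambda_2<0$. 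You instead invoke the classical global-optimality certificate for the trust-region/constrained-eigenvalue subproblem: since the minimum root $\lambda^{\star}$ lies below $s_1$, the shifted Hessian $\diag(\bs)-\lambda^{\star}\bI$ is positive definite, the shifted Lagrangian is strictly convex, and its stationary point $\tilde\bx^{\star}$ is therefore the global minimiser of (\ref{equ_qp_sphere2}) over the sphere; reading off the objective value $\tfrac12 f(\mu)$ at the KKT point attached to each root $\mu$ then gives $f(\lambda^{\star})\le f(\mu)$. Both arguments lean on Lemma~\ref{lem_root_lessthan1} for the existence and uniqueness of the root below $s_1$, and both correctly identify that ``minimising $f$'' must be read as minimising over the roots of $f^{\prime}$ since $f$ is unbounded below. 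What your route buys is a direct certificate that $\tilde\bx^{\star}$ globally solves the SCQP (which is the real content of the lemma in context), plus uniqueness of the minimiser from strict convexity; the paper's route buys a shorter, entirely self-contained computation that needs nothing beyond Cauchy--Schwarz. One cosmetic remark: your comparison step only yields the non-strict inequality $f(\lambda^{\star})\le f(\mu)$, whereas the paper obtains strict inequality; if you want strictness you can note that equality would force $\tilde\bx^{(\mu)}=\tilde\bx^{\star}$ by strict convexity of the shifted Lagrangian, which is impossible for $\mu\neq\lambda^{\star}$ since the first coordinates $c_1/(\mu-s_1)$ and $c_1/(\lambda^{\star}-s_1)$ differ.
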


Proofs of Lemmas~\ref{lem_root_lessthan1} and \ref{lem_minimumroot} are given in Appendices~\ref{sec:proof_lem_root_lessthan1}-\ref{sec:proof_lem_minimumroot}. 
We proceed to show that $\lambda^{\star}$ can be found with a  tighter bound.

\begin{lemma}\label{lem_bound_lda_1term}
The function $f(\lambda)$  in (\ref{func_lambda}) has a unique global minimizer in the interval $(1- t_1, 1- t_2)$, where $t_1$ and $t_2$ are the roots which lie in the interval of $\displaystyle \left[{|c_1|},1 \right]$ of two degree-4  polynomials $p_1(t)$ and $p_2(t)$, given by 
\be
p_i(t)  &=&  t^4 + 2  d_i \, t^3 + (d_i^2-1) \, t^2 - 2c_1^2 d_i \, t - c_1^2 d_i^2  \notag
\ee
where $d_1 = s_2 - s_1$  and $d_2 = s_K - s_1$.
\end{lemma}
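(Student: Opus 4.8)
The plan is to rewrite the stationarity condition $f'(\lambda)=0$ after the shift $\lambda = 1 - t$, exploit the normalisation $\sum_k c_k^2 = 1$, and sandwich the resulting equation between two two-term equations whose clearing of denominators produces exactly the quartics $p_1$ and $p_2$. Since Lemma~\ref{lem_root_lessthan1} already locates the global minimiser $\lambda^\star$ in $(0, 1-|c_1|)$, I set $t^\star = 1 - \lambda^\star \in (|c_1|, 1)$ and write $\delta_k = s_k - s_1 = s_k - 1 \ge 0$, so that $\delta_1 = 0$, $\delta_2 = d_1$ and $\delta_K = d_2$. Because $(\lambda - s_k)^2 = (t+\delta_k)^2$ under this shift, the equation $f'(\lambda)=0$ becomes
\begin{equation*}
g(t) := \frac{c_1^2}{t^2} + \sum_{k=2}^K \frac{c_k^2}{(t+\delta_k)^2} = 1 ,
\end{equation*}
and uniqueness of the minimiser is inherited from Lemmas~\ref{lem_root_lessthan1} and \ref{lem_minimumroot}; only the tighter bracketing interval is new here.

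Next I would bound the tail sum. Because $d_1 = \delta_2 \le \delta_k \le \delta_K = d_2$ for $k \ge 2$ and $\sum_{k\ge2} c_k^2 = 1 - c_1^2$, monotonicity of $s \mapsto (t+s)^{-2}$ gives, for every $t>0$,
\begin{equation*}
\frac{1-c_1^2}{(t+d_2)^2} \;\le\; \sum_{k=2}^K \frac{c_k^2}{(t+\delta_k)^2} \;\le\; \frac{1-c_1^2}{(t+d_1)^2} .
\end{equation*}
Adding $c_1^2/t^2$ sandwiches $g$ between the two-term functions $h_i(t) := c_1^2/t^2 + (1-c_1^2)/(t+d_i)^2$, namely $h_2(t) \le g(t) \le h_1(t)$. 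Clearing denominators in $h_i(t)=1$ by multiplying through by $t^2(t+d_i)^2$ yields $t^2(t+d_i)^2 = c_1^2(t+d_i)^2 + (1-c_1^2)t^2$, which after expansion and collection of terms is precisely $p_i(t)=0$; this is the routine algebra I would relegate to a single displayed computation.

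Finally I would convert the sandwich into the bracketing interval. Each $h_i$ is strictly decreasing on $(0,\infty)$, with $h_i(|c_1|) = 1 + (1-c_1^2)/(|c_1|+d_i)^2 > 1$ and $h_i(1) = c_1^2 + (1-c_1^2)/(1+d_i)^2 < 1$ (using $d_i > 0$ and $|c_1|<1$ in the non-degenerate case), so $p_i$ has a unique root $t_i$ in $(|c_1|,1)$. Evaluating the sandwich at $t^\star$ gives $h_1(t^\star) \ge 1 = h_1(t_1)$ and $h_2(t^\star) \le 1 = h_2(t_2)$; strict monotonicity then forces $t_2 \le t^\star \le t_1$, hence $1 - t_1 \le \lambda^\star \le 1 - t_2$, which is the claimed interval. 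The main obstacle is bookkeeping rather than analysis: one must track that the smallest gap $d_1 = s_2 - s_1$ yields the upper bound $t_1$ on $t^\star$ (and thus the lower endpoint $1-t_1$ for $\lambda^\star$) while the largest gap $d_2 = s_K - s_1$ yields $t_2$, and confirm that, although each quartic $p_i$ may have up to four real roots, the strict monotonicity of $h_i$ isolates exactly one root in $[|c_1|,1]$, so that the bracket is well defined and nonempty.
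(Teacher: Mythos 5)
Your proof is correct and follows essentially the same route as the paper's: both bound the tail sum $\sum_{k\ge 2} c_k^2/(t+\delta_k)^2$ between $(1-c_1^2)/(t+d_2)^2$ and $(1-c_1^2)/(t+d_1)^2$ and observe that clearing denominators in the resulting two-term equations yields exactly $p_1$ and $p_2$. The only difference is cosmetic but in your favour: the paper establishes uniqueness of the root $t_i\in[|c_1|,1]$ by calculus on the quartic ($p_i''$, $p_i'$, and sign checks at $0$, $|c_1|$, $1$), whereas your factorisation $p_i(t)=t^2(t+d_i)^2\bigl(1-h_i(t)\bigr)$ with $h_i$ strictly decreasing gives the same conclusion, and the final bracketing, more directly.
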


We provide proof of Lemma~\ref{lem_bound_lda_1term} in Appendix~\ref{sec:proof_lem_bound_lda_1term}, and illustrate the polynomials $p_i(t)$ in $[0, 1]$ for various $d_i$ in Fig.~\ref{fig_bound_lda}. 
The roots $t_i$ approach $|c_1|$ when $d_i$ are large, and 1 when $d_i$ are small. 

\begin{figure}
\centering
\includegraphics[width=.41\textwidth, trim = 0.0cm .0cm 0cm 0cm,clip=true]{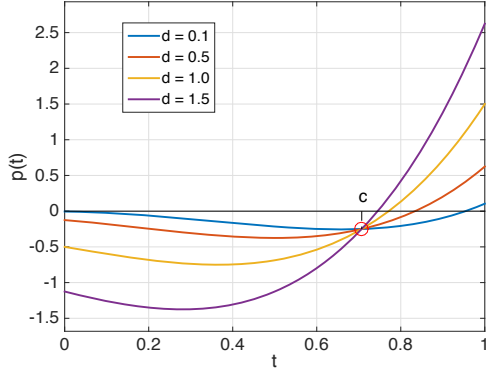}
\caption{Illustration of the polynomial $p(t) = t^4 + 2  d \, t^3 + (d^2-1) \, t^2 - 2c^2 d \, t - c^2 d^2  $ with $c^2 = 0.5$ and various $d$, and its unique roots in the interval $[c, 1]$.}
\label{fig_bound_lda}
\end{figure}

If $s_K = s_2$, i.e., $(K-1)$ eigenvalues $s_2$, $s_3$, ..., $s_K$ are identical, then $t_1 = t_2 = t^{\star}$, and $\lambda = 1-t^{\star}$ is a root of $f^{\prime}(\lambda) =  0$.
When $s_2$ and $s_K$ are relatively close, the bound $[1 - t_1, 1-t_2]$ is tight and provides a good approximation to the root $\lambda$ as illustrated in Fig.~\ref{fig_bound_lda}.
 
When $d_1 \ge 1$, it follows that $d_2 \ge 1$, and the bound width $(t_1 -t_2)$ is relatively small.
 For example, when $d_2 = 2 d_1$ and $d_1>1$,  the bound width $(t_1 - t_2)$ is often less than 0.1, while the width is even less than 0.01 when $d_1 $ exceeds 3, and is less than 0.001 when $d_1 \ge 10$, despite of values of $s_K$, as seen in Fig~\ref{fig_bound_lda2} for the cases $d_2 = 2 d_1$ and $d_2 = 1000 d_1$.

\begin{figure}[t]
\centering
\includegraphics[width=.41\textwidth, trim = 0.0cm .0cm 0cm 0cm,clip=true]{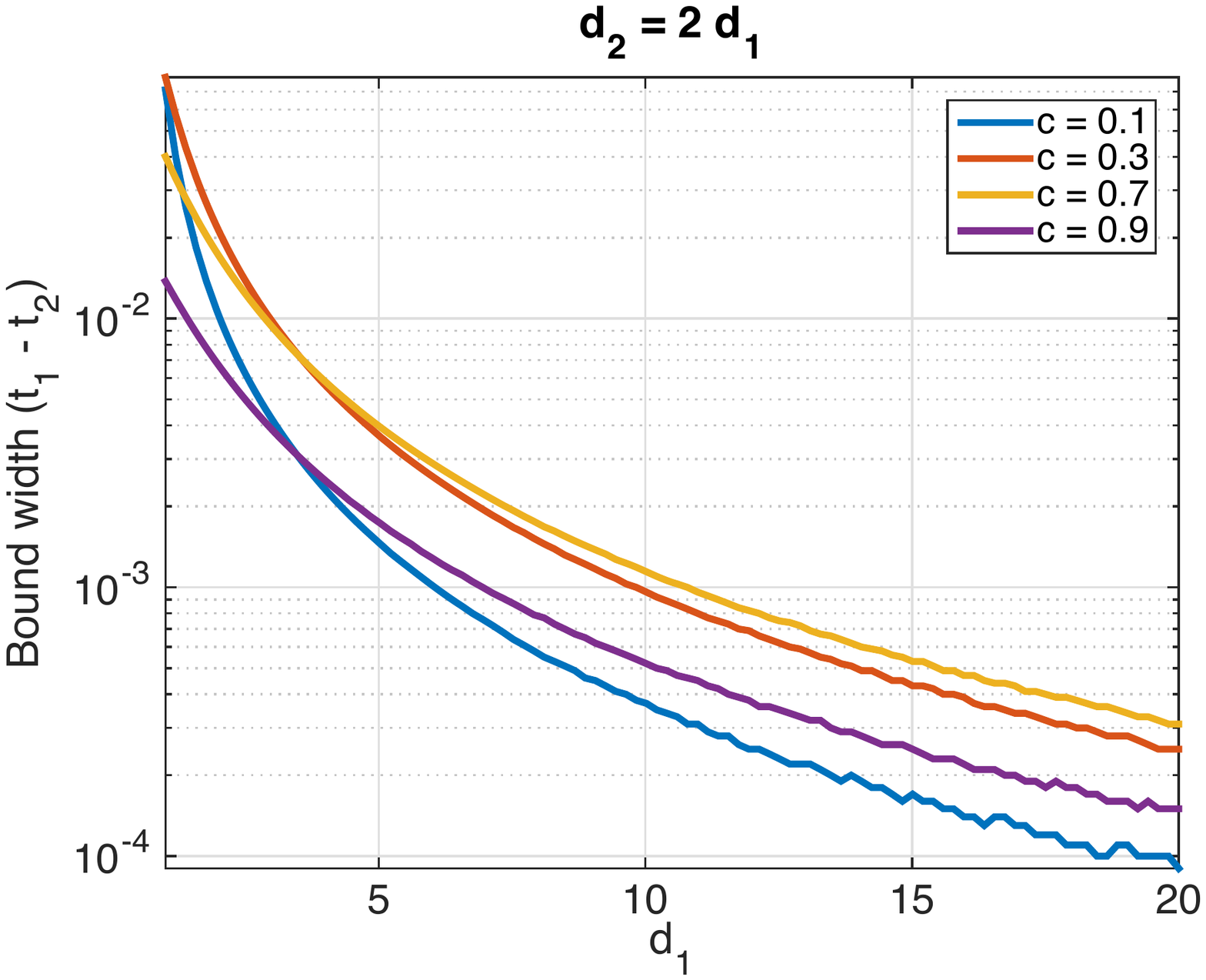}
\includegraphics[width=.41\textwidth, trim = 0.0cm .0cm 0cm 0cm,clip=true]{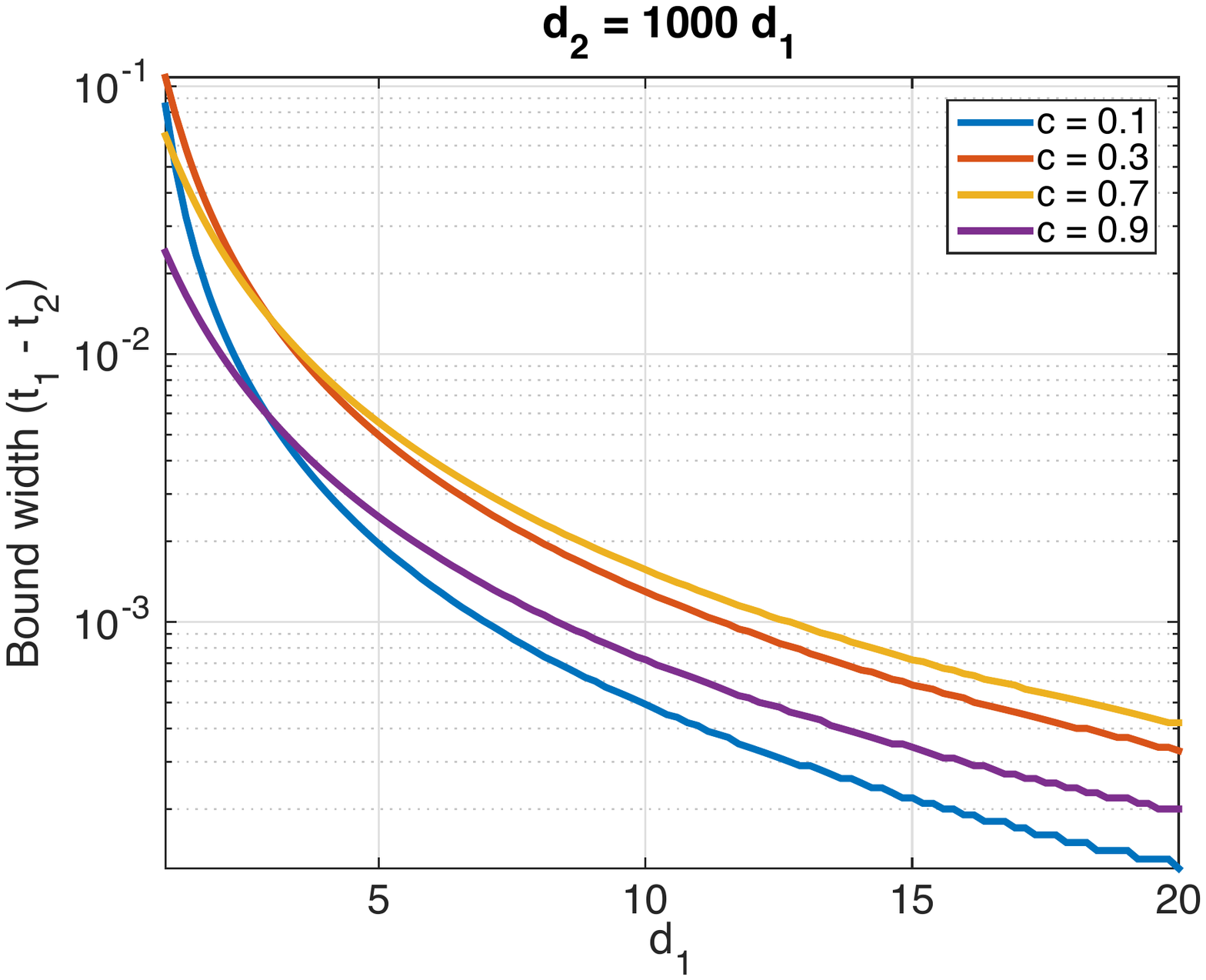}
\caption{Illustration of the bound width $(t_1 - t_2)$ when $d_2 = 2 d_1$ and $d_2 = 1000 d_1$ for various values of $c_1 = c$.}
\label{fig_bound_lda2}
\end{figure}

In general, the bound width $(t_1 - t_2)$ is tight when $d_1 \approx d_2$, i.e., the eigenvalues $s_2$, \ldots, $s_K$ are located in a narrow range,
or when $d_1$ exceeds 1, i.e., $s_2 > 2$.
However, the bound width is not sufficiently good when $d_1< 1 < d_2$, 
especially when $d_1$ is small, $t_1$ approaches 1, and $t_2$ approaches $|c_1|$. Hence, there is no much improvement on the bound for $\lambda$, compared to the obvious bound $[0, 1-|c_1|]$.

In order to improve the bound of the minimiser $\lambda^{\star}$, when $s_2 - s_1 < 1$, we propose to solve a similar equation to (\ref{equ_lambda_zero}) but with a smaller number of terms. We shall refer it to as the \emph{truncated problem}.
Let 
$
	\tilde{c}_{L} = \sqrt{\sum_{k = L}^{K} c_k^2}$. 
We define a set of equations 
$f^{(L)}_{l}(\lambda)$ and $f^{(L)}_{u}(\lambda)$ constructed from the first $L$ terms of the equation  $f^{\prime}(\lambda)$ in (\ref{equ_lambda_zero})
\be
f^{(L)}_{l}(\lambda) &=& 1  -  \sum_{l = 1}^{L} \frac{c_l^2}{(s_l - \lambda)^2}   - 	\frac{\tilde{c}_{L+1}^2 }{(s_{L+1} - \lambda)^2}	\,,  \notag  \\
f^{(L)}_{u}(\lambda) &=&1  -  \sum_{l = 1}^{L} \frac{c_l^2}{(s_l - \lambda)^2}   - 	\frac{\tilde{c}_{L+1}^2}{(s_{K} - \lambda)^2}	\, . \notag 
\ee

\begin{lemma}\label{lem_bound_lda_Lterms}
The roots $\lambda_{l,L}^{\star}$ of $f^{(L)}_{l}(\lambda)$ 
and the roots $\lambda_{u,L}^{\star}$ of $f^{(L)}_{u}(\lambda)$ in $[0, 1-|c_1|]$ are unique, and form the lower and upper bounds of the root $\lambda^{\star}$ of $f^{\prime}(\lambda)$ in (\ref{equ_lambda_zero})
\be
\lambda_{l,1}^{\star} \le \lambda_{l,2}^{\star} \le \cdots \le \lambda_{l,K-2}^{\star} \le \lambda^{\star}  \le \lambda_{u,K-2}^{\star} \le \cdots \le 
\lambda_{u,2}^{\star} \le \lambda_{u,1}^{\star} \, . \label{equ_sequence_lambda}
\ee
\end{lemma}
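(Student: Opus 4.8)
The plan is to derive the entire chain (\ref{equ_sequence_lambda}) from a single comparison principle together with two monotonicity facts about the truncated equations. First I would record the elementary observation that for every $\lambda \in (0,1)$ all denominators $(s_k - \lambda)$ are strictly positive, since $s_k \ge s_1 = 1 > \lambda$; hence $f'$, $f^{(L)}_{l}$ and $f^{(L)}_{u}$ are each $1$ minus a finite sum of terms of the form $a^2/(s-\lambda)^2$ with $s > \lambda$. Each such term has derivative $2a^2/(s-\lambda)^3 > 0$, so all three functions are \emph{strictly decreasing} on $(0,1)$ and possess at most one root there. This yields the comparison principle I will use repeatedly: if $g$ and $h$ are strictly decreasing on $(0,1)$ with $g(\lambda) \le h(\lambda)$ pointwise and each has a root in the interval, then $h(r_g) \ge g(r_g) = 0 = h(r_h)$ forces $r_g \le r_h$.

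The heart of the argument is the pointwise sandwich $f^{(L)}_{l}(\lambda) \le f'(\lambda) \le f^{(L)}_{u}(\lambda)$ on $(0,1)$. Since the first $L$ terms are common to all three functions, this reduces to bounding the tail $\sum_{k=L+1}^{K} c_k^2/(s_k-\lambda)^2$ of $f'$. Because $s \mapsto 1/(s-\lambda)^2$ is decreasing in $s$ for $s>\lambda$ and $s_{L+1} \le s_k \le s_K$ for $k \ge L+1$, each tail term obeys $c_k^2/(s_K-\lambda)^2 \le c_k^2/(s_k-\lambda)^2 \le c_k^2/(s_{L+1}-\lambda)^2$; summing and using $\tilde c_{L+1}^2 = \sum_{k=L+1}^K c_k^2$ gives $\tilde c_{L+1}^2/(s_K-\lambda)^2 \le \sum_{k\ge L+1} c_k^2/(s_k-\lambda)^2 \le \tilde c_{L+1}^2/(s_{L+1}-\lambda)^2$. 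Subtracting from $1$ after the common head yields exactly the sandwich, and applying the comparison principle at the root $\lambda^{\star}$ of $f'$ gives $\lambda_{l,L}^{\star} \le \lambda^{\star} \le \lambda_{u,L}^{\star}$.

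To obtain the nested tightening $\lambda_{l,L}^{\star}\le\lambda_{l,L+1}^{\star}$ and $\lambda_{u,L+1}^{\star}\le\lambda_{u,L}^{\star}$, I would compute the differences $f^{(L)}_l - f^{(L+1)}_l$ and $f^{(L)}_u - f^{(L+1)}_u$ using the telescoping identity $\tilde c_{L+1}^2 = c_{L+1}^2 + \tilde c_{L+2}^2$. A short cancellation gives $f^{(L)}_l(\lambda) - f^{(L+1)}_l(\lambda) = \tilde c_{L+2}^2\bigl[(s_{L+2}-\lambda)^{-2} - (s_{L+1}-\lambda)^{-2}\bigr] \le 0$ and $f^{(L)}_u(\lambda)-f^{(L+1)}_u(\lambda) = c_{L+1}^2\bigl[(s_{L+1}-\lambda)^{-2}-(s_K-\lambda)^{-2}\bigr] \ge 0$, both signs following from the monotonicity of $1/(s-\lambda)^2$ and the ordering $s_{L+1}\le s_{L+2}\le s_K$. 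Thus $f^{(L)}_l \le f^{(L+1)}_l$ and $f^{(L)}_u \ge f^{(L+1)}_u$ on $(0,1)$, and the comparison principle delivers the two monotone chains.

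Finally I would verify that the relevant roots lie in $[0,1-|c_1|]$, which localizes the bounds and forces uniqueness there. Existence follows from the intermediate value theorem: at $\lambda=0$ one checks $f^{(L)}_l(0),f^{(L)}_u(0)\ge 0$ because each is $1$ minus a weighted average of values $1/s_k^2 \le 1$ with weights summing to $1$; moreover $f^{(L)}_u(\lambda)\to-\infty$ as $\lambda\uparrow 1$ from the head term $c_1^2/(1-\lambda)^2$, while $f^{(L)}_l(\lambda^{\star})\le 0$ from the sandwich. The upper endpoint is pinned exactly as in Lemma~\ref{lem_root_lessthan1}: at any root one has $c_1^2/(1-\lambda)^2 \le 1$, hence $\lambda \le 1-|c_1|$. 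Combining the sandwich, the nesting, and the fact that at $L=K-1$ the single lumped term reproduces $f'$ exactly, so that the approximation is meaningful only up to $L=K-2$, I assemble the full ordering (\ref{equ_sequence_lambda}). I expect the main obstacle to be the careful sign bookkeeping in the nesting step together with confirming the endpoint localization, so that every root genuinely sits inside $[0,1-|c_1|]$ where strict monotonicity guarantees uniqueness.
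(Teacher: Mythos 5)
Your proof is correct and follows essentially the same route as the paper: establish the pointwise sandwich $f^{(L)}_{l}\le f'\le f^{(L)}_{u}$ by bounding the tail terms via $s_{L+1}\le s_k\le s_K$, then compare signs at the roots using monotonicity of the secular functions on $(0,1)$, with uniqueness and the localization in $[0,1-|c_1|]$ handled exactly as in Lemma~\ref{lem_root_lessthan1}. The only difference is that you explicitly carry out the telescoping computation for the nested chain $\lambda_{l,L}^{\star}\le\lambda_{l,L+1}^{\star}$ and $\lambda_{u,L+1}^{\star}\le\lambda_{u,L}^{\star}$, which the paper merely asserts ``can be proved in a similar way.''
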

The proof is given in Appendix~\ref{sec:proof_lem_bound_lda_Lterms}.
We note that the bound derived in Lemma~\ref{lem_bound_lda_1term}  is a particular case of Lemma~\ref{lem_bound_lda_Lterms} with  $\lambda_{l,1}^{\star} = 1 - t_1$ and  $\lambda_{u,1}^{\star} = 1 - t_2$. 

Lemma~\ref{lem_bound_lda_Lterms} states that we can obtain a tighter bound for the minimiser $\lambda^{\star}$ of $f^{\prime}$ by solving a truncated secular equation with only a few terms $\frac{c_l^2}{(s_l - \lambda)^2}$.
The method is particularly useful when the first $L$ eigenvalues, $s_1, s_2, ..., s_{L}$, are very close to each other, while $s_{L+1}$ exceeds 1 significantly.

\begin{example}
In Fig.~\ref{fig_bound_vs_Lterms}, we demonstrate good estimates of the minimiser $\lambda^{\star}$ of the equation $f^{\prime}(\lambda)$ which has $K = 1000$ terms. 
The eigenvalues $s_k$ are randomly generated such that some of the first $T$ eigenvalues, $s_k$, are smaller than 2, where $T$ = 5 or 10. The eigenvalues, $s_k$, are plotted in Fig.~\ref{fig_bound_vs_Lterms}. The bound width $(\lambda_{u,L}^{\star} - \lambda_{l,L}^{\star})$ is computed for various $L = 1, 2, \ldots, K-1$. For the first case, we can obtain a bound of less than 0.01 when solving the truncated problem of only $4$ or $5$ terms.
For the second case, a bound of less than 0.01 is achieved when solving a truncated equation with $L = 11$ terms.
The bound is tighter, less than $10^{-3}$ when the truncated equation has 20-40 terms.
Moreover, solving the truncated problems with 200 terms provides good approximation to the global minimiser $\lambda$ with an error less than $10^{-5}$.
\end{example}
\begin{figure}
\centering
\subfigure[The case with 5 eigenvalues smaller than 2.]{
\includegraphics[width=.41\textwidth, trim = 0.0cm .0cm 0cm 0cm,clip=true]{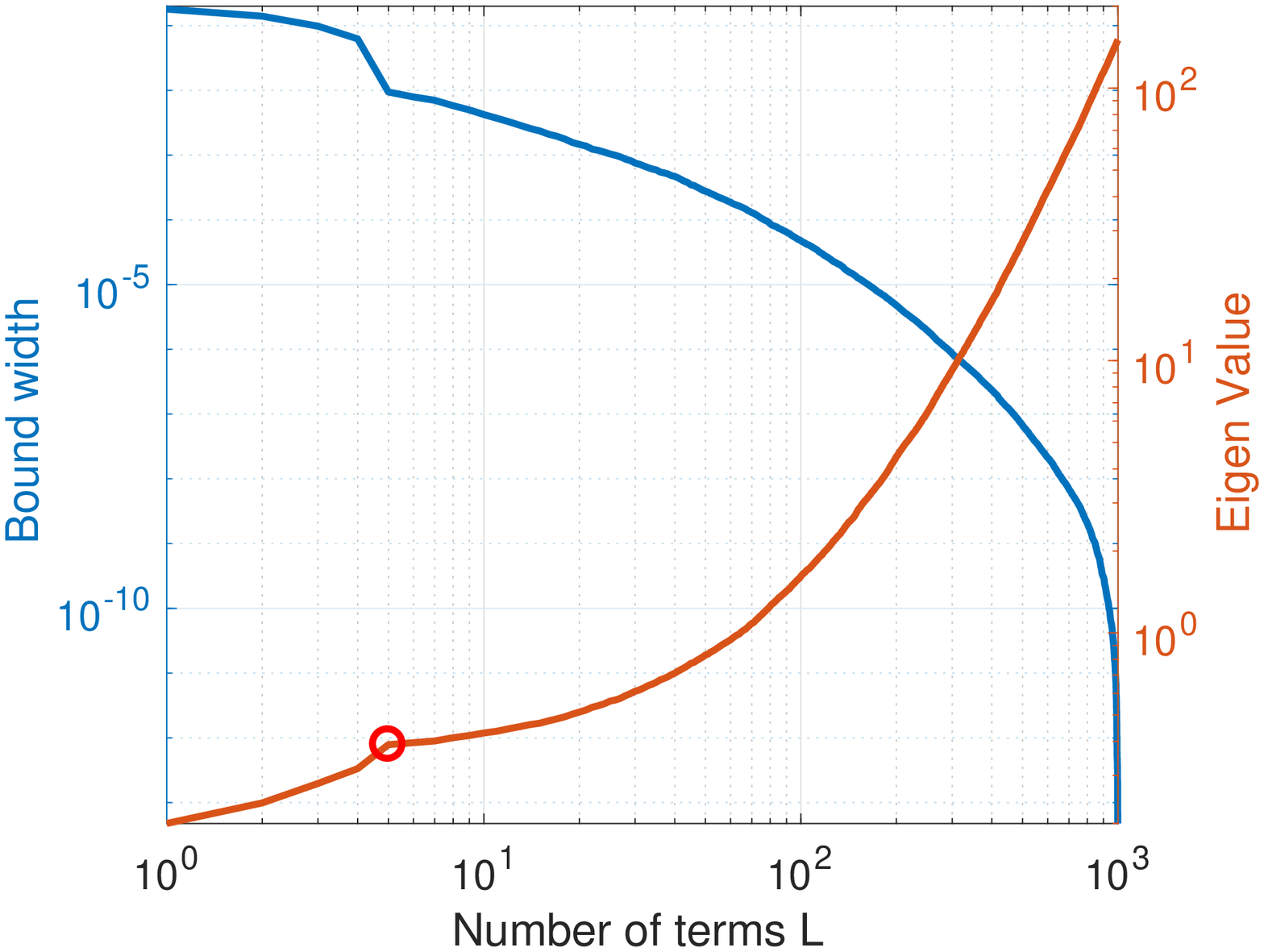}}
\hfill
\subfigure[The case with 10 eigenvalues smaller than 2.]{
\includegraphics[width=.41\textwidth, trim = 0.0cm .0cm 0cm 0cm,clip=true]{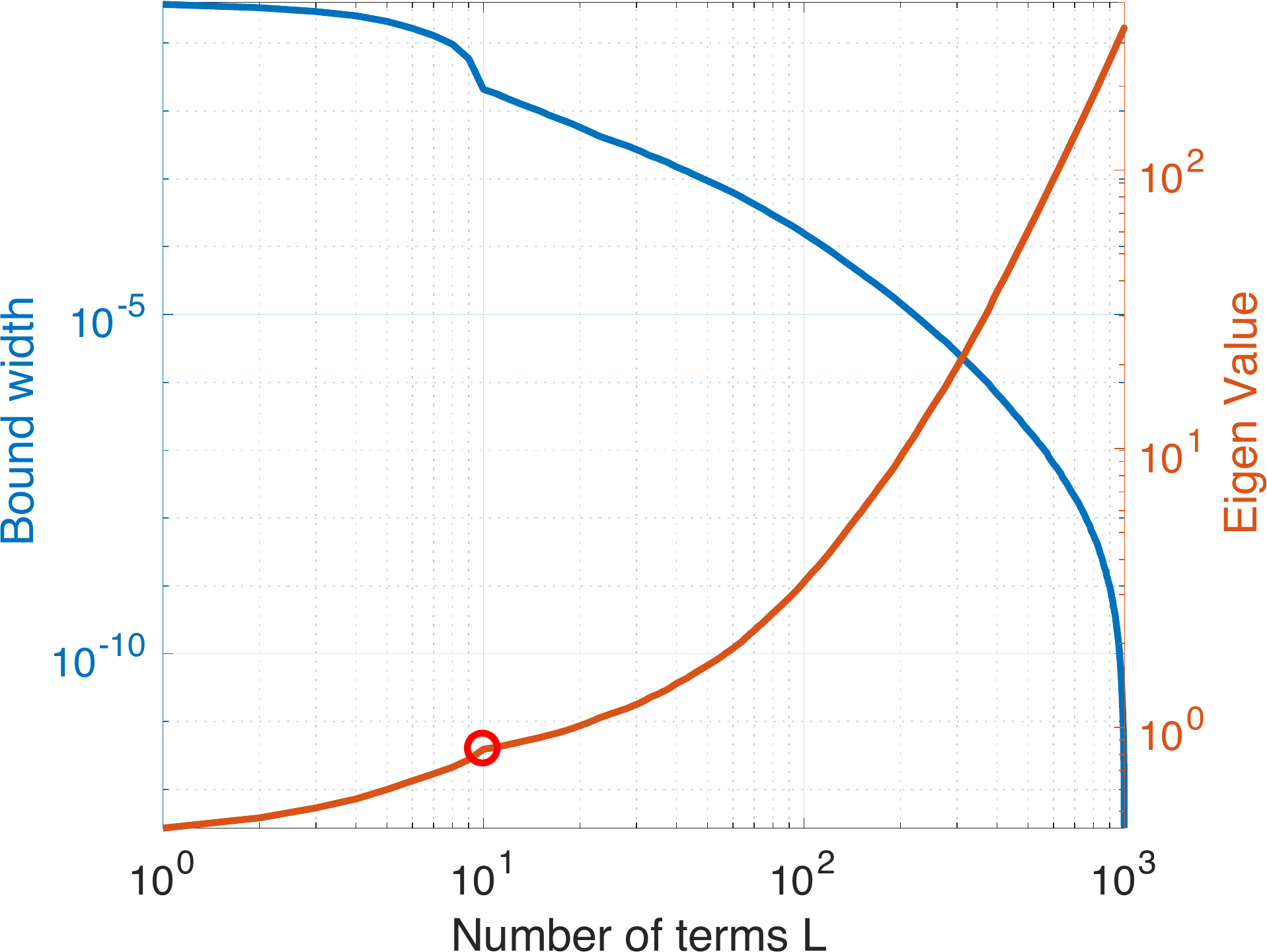}}
\caption{Illustration of a bound width of $\lambda$ by solving the reduced problem using $L$ terms. The bound width can be less than $0.001$ when solving the truncated equations with dozens of terms.}
\label{fig_bound_vs_Lterms}
\end{figure}

\subsection{The case when more than one coefficients $c_n$ are zeros}
\label{sec:cn_zeros}

Assume that there are more than one zero coefficients $c_{l} = 0$, we then denote their index set by $\calI_0 = \{l :  c_l = 0\}$, 
and by $n$ the smallest index of this set, i.e., $c_n = 0$.
We shall first show that the entries $\tilde{x}_{l}^{\star}$ of the minimiser $\tilde{\bx}^{\star}$ are zeros, where $l \in \calI_0$ and $l \neq n$, 
and the optimization problem can be converted to the case with only one zero coefficient $c_n = 0$.

The objective function (\ref{equ_qp_sphere2}) can be rewritten as 
\be
\min  \;\;  \left(\frac{1}{2} \sum_{k \notin \calI_0} s_k \, \tilde{x}_k^2   + \sum_{k \notin \calI_0}   c_k \, \tilde{x}_k \right) + \frac{1}{2} \sum_{l \in \calI_0}  \tilde{x}_l^2 \, s_l \, , \quad 
\text{s.t.}  \;\; {\tilde\bx}^T {\tilde\bx} = 1    \notag 
\ee
and achieves a minimum when the subset of the variables $\tilde{\bx}_{\calI_0} = [\tilde{x}_l:  l\in \calI_0]$ is a minimiser of the following problem
\be
& \min   \qquad   & \sum_{l \in \calI_0}  \tilde{x}_l^2 \, s_l \, \notag  \, \quad
\text{s.t.}  \quad {\tilde\bx_{\calI_0}}^T {\tilde\bx}_{\calI_0} =  r \notag 
\ee
where $r = 1 - \sum_{k \notin \calI_0}  x_k^2 >0$. The problem now boils down to finding an eigenvector associated with the smallest eigenvalue, i.e., $s_n$, of the diagonal matrix $\diag(\bs_{\calI_0})$. This implies that $\tilde{x}_n^2  =  r$, 
and the other entries  $\tilde{x}_{l}$ are zeros, where $l \in \calI_0$, $l\neq n$.
The problem is now simplified into a problem formulated for $s_k$ and $c_k$ where $k \in \{1,\ldots, K\} \setminus \calI_0 \cup \{n\}$, which has at most one zero coefficient $c_n = 0$.
We will next show that $x_n$ is also zero if $n>1$.

\subsection{The case when only one coefficient $c_n$ is zero with $n>1$}\label{sec:cn_zero}


\begin{lemma}\label{lem_cn_zero}
When there is only one $c_n = 0$ with $n > 1$, the $n$-th variable of the minimiser is zero, i.e., $\tilde{x}_n^{\star} = 0$.
\end{lemma}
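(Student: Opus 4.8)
The plan is to read the $n$th coordinate of the first-order condition (\ref{equ_1st_opt_condtion}) and then pin down the optimal Lagrange multiplier. Writing (\ref{equ_1st_opt_condtion}) component-wise at a minimiser $(\tilde{\bx}^\star,\lambda^\star)$ of (\ref{equ_qp_sphere2}) gives $(s_k-\lambda^\star)\,\tilde x_k^\star + c_k = 0$ for every $k$. For $k=n$, since $c_n=0$, this reduces to $(s_n-\lambda^\star)\,\tilde x_n^\star = 0$, so either $\tilde x_n^\star=0$ (the claim) or $\lambda^\star = s_n$. It therefore suffices to rule out $\lambda^\star = s_n$; in fact I would prove the stronger separation $\lambda^\star < s_1 = 1 \le s_n$.

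The clean route exploits that $c_1\neq 0$, which holds precisely because $n>1$ and $n$ is the \emph{only} index with a vanishing coefficient. A global minimiser of a QP over a sphere must have a multiplier making the Hessian of the Lagrangian positive semidefinite, i.e. $\diag(\bs)-\lambda^\star\bI \succeq \0$, equivalently $\lambda^\star \le \min_k s_k = s_1$; this is exactly the property underlying Lemmas~\ref{lem_root_lessthan1} and \ref{lem_minimumroot}, whose relevant root is the one in $(-\infty,s_1)$. The first coordinate equation $(s_1-\lambda^\star)\,\tilde x_1^\star = -c_1 \neq 0$ then excludes $\lambda^\star = s_1$, so $\lambda^\star < s_1$. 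Since the eigenvalues are ordered and $n>1$, we have $s_n \ge s_1 > \lambda^\star$, hence $s_n-\lambda^\star>0$, and the $n$th equation forces $\tilde x_n^\star=0$. Note this argument needs only $s_1 > \lambda^\star$, so it is insensitive to possible ties $s_n = s_1$.

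A self-contained alternative stays within the already-proven lemmas through a value comparison. Deleting the vanishing $n$th term from (\ref{equ_lambda_zero}) yields a genuine secular function $\hat f(\lambda)=\lambda+\sum_{k\neq n} c_k^2/(\lambda-s_k)$ whose coefficients are all nonzero and still satisfy $\sum_{k\neq n}c_k^2=1$ with smallest node $s_1=1$; Lemmas~\ref{lem_root_lessthan1}--\ref{lem_minimumroot} apply to it and locate the minimiser of the problem restricted to $\tilde x_n=0$ at the unique root $\hat\lambda\in(0,1-|c_1|)$, with optimal value $\tfrac12\hat f(\hat\lambda)$. A short computation shows the competing critical point with $\lambda=s_n$, at which $\tilde x_k = c_k/(s_n-s_k)$ for $k\neq n$ and $\tilde x_n^2 = 1-\sum_{k\neq n}c_k^2/(s_n-s_k)^2$, has objective value exactly $\tfrac12\hat f(s_n)$. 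The main obstacle is the resulting strict comparison of $\tfrac12\hat f(\hat\lambda)$ with $\tfrac12\hat f(s_n)$: the former is the global minimum, so the latter is no smaller, and it is strictly larger because equality would require the $\lambda=s_n$ point to be a global solution and hence $s_n\le s_1$, which $c_1\neq0$ rules out (it precludes any critical point at $\lambda=s_1$). In both routes the decisive fact is identical, namely that a global spherical-QP solution has multiplier at most the smallest node $s_1$, and the hypothesis $n>1$ (hence $c_1\neq0$) makes this bound strict, separating $\lambda^\star$ from $s_n$ and yielding $\tilde x_n^\star=0$.
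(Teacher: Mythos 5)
Your proposal is correct, and your primary route is genuinely different from the paper's. The paper argues by contradiction: assuming $\tilde{x}_n^{\star}\neq 0$ it deduces $\lambda^{\star}=s_n$ from the stationarity condition, writes out the Lagrangian value $\frac{1}{2}\bigl(s_n+\sum_{k\neq n}c_k^2/(s_n-s_k)\bigr)$ at that critical point, compares it with the value $\frac{1}{2}\bigl(\lambda_n^{\star}+\sum_{k\neq n}c_k^2/(\lambda_n^{\star}-s_k)\bigr)$ attained by a competitor with vanishing $n$-th coordinate, and asserts the former is strictly larger (a step which, to be fully rigorous, needs the Cauchy--Schwarz comparison from the proof of Lemma~\ref{lem_minimumroot}, where the bound $\sum_{k\neq n}c_k^2/(s_n-s_k)^2=1-\tilde{x}_n^2<1$ even makes the inequality strict for free). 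Your second, ``self-contained'' route is essentially this same value comparison, and its strictness step as you state it is the weakest link --- ``equality would require the $\lambda=s_n$ point to be a global solution and hence $s_n\le s_1$'' quietly re-imports the second-order condition of your first route; if you go this way you should instead make the Cauchy--Schwarz estimate explicit. Your first route is cleaner and shorter: the classical global-optimality condition for a sphere-constrained QP (that $\diag(\bs)-\lambda^{\star}\bI\succeq\0$ at a global minimiser, which follows from the identity $q(\by)-q(\bx^{\star})=\frac{1}{2}(\by-\bx^{\star})^T(\diag(\bs)-\lambda^{\star}\bI)(\by-\bx^{\star})$ for feasible $\by$) gives $\lambda^{\star}\le s_1$, the hypothesis $c_1\neq 0$ (which holds precisely because $n>1$ is the only vanishing index) rules out equality, and then $s_n-\lambda^{\star}>0$ forces $\tilde{x}_n^{\star}=0$ directly from the $n$-th stationarity equation. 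What this buys is brevity, no case analysis on whether the competing critical point even exists, and robustness to ties $s_n=s_1$; what it costs is reliance on a standard fact that the paper never states explicitly (it is implicit in Lemmas~\ref{lem_root_lessthan1}--\ref{lem_minimumroot} only for the all-nonzero case, but is proved in the cited references of Gander--Golub--von~Matt and Hager), so you should either cite it or include the one-line identity above.
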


The proof of this case is given in Appendix~\ref{sec:proof_lem_cn_zero}. In summary, as shown in this and previous sub-sections, if the coefficients $c_n$, with $n>1$, are zeros, the corresponding parameters of the minimiser $\tilde{x}_n$ are zeros as well, and the remaining variables are a solution to a similar problem but with a reduced number of parameters.
 
\subsection{The case when $c_1 = 0$}\label{sec:c1_zero}

When $c_1 = 0$, we consider the two sub-cases, when 
$d = \sum_{k>1}^{K} \frac{c_k^2}{(s_k-1)^2}$  is less than or greater than 1.

\begin{lemma}\label{lem_c1_zero}
Consider the case $c_1= 0$, let $d = \sum_{k>1}^{K} \frac{c_k^2}{(s_k-1)^2}$.
\begin{itemize}
\item If $d \le 1$, then the following $\tilde{\bx}^{\star}$ is a minimiser to the problem in (\ref{equ_qp_sphere2})
\be
	\tilde{x}^{\star}_{k} =  \frac{c_k}{1-s_k} ,  \quad k >1 	\notag 
\ee 
and $\tilde{x}^{\star}_{1}$ can take one of the two values
 $\pm \sqrt{1 - d}$.
\item Otherwise, the minimiser has $\tilde{x}_{1} = 0$, and the remaining $(K-1)$ variables  $[\tilde{x}_2, \ldots, \tilde{x}_K]$ are a solution to a reduced problem 
\be
&\min  \quad   & \sum_{k>1} \frac{1}{2} \, s_k \, \tilde{x}_k^2   + c_k \, \tilde{x}_k  \label{prob_x_2_K} \,, \quad \mathrm{{s.t.}}  \quad  \sum_{k>1}  \tilde{x}_k^2 = 1  \,.
\ee
\end{itemize}
\end{lemma}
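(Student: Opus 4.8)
The plan is to read off the candidate solutions from the first-order (KKT) conditions, and then decide among them using the global-optimality characterisation of the sphere-constrained QP, with the threshold $d=1$ emerging naturally from a monotonicity argument.

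First I would write the stationarity condition (\ref{equ_1st_opt_condtion}), namely $(\diag(\bs)-\lambda\bI)\,{\tilde\bx}+\bc=\0$. Since $c_1=0$ and $s_1=1$, its first component is $(1-\lambda)\,\tilde{x}_1=0$, which forces either $\lambda=1$ or $\tilde{x}_1=0$; this dichotomy produces exactly the two candidate families in the statement. For the indices $k>1$ (where $c_k\neq0$ after the reductions of Sections~\ref{sec:cn_zeros}--\ref{sec:cn_zero}) stationarity gives $\tilde{x}_k=c_k/(\lambda-s_k)$ in general, and $\tilde{x}_k=c_k/(1-s_k)$ when $\lambda=1$.

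Next I would invoke the global-optimality principle already underlying Lemmas~\ref{lem_root_lessthan1} and \ref{lem_minimumroot}: the minimiser's multiplier must satisfy $\diag(\bs)-\lambda^\star\bI\succeq\0$, i.e. $\lambda^\star\le s_1=1$. In the branch $\lambda=1$, substituting $\tilde{x}_k=c_k/(1-s_k)$ into $\|{\tilde\bx}\|^2=1$ yields $\tilde{x}_1^2=1-d$, which is feasible precisely when $d\le1$, giving $\tilde{x}_1=\pm\sqrt{1-d}$; since $\diag(\bs)-\bI=\diag(0,s_2-1,\ldots,s_K-1)\succeq\0$, this stationary point meets the sufficient optimality condition and is a global minimiser, matching the first bullet. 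In the branch $\tilde{x}_1=0$ the problem restricts to the coordinates $k>1$ and becomes the reduced SCQP (\ref{prob_x_2_K}), whose minimiser carries a multiplier $\lambda_B$ fixed by $g(\lambda_B)=1$, where $g(\lambda):=\sum_{k>1}c_k^2/(s_k-\lambda)^2$.

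The decisive step is to compare $\lambda_B$ with $1$. Because $g$ is strictly increasing on $(-\infty,s_2)$ (its derivative is a sum of strictly positive terms) and $g(1)=d$, I would conclude $\lambda_B<1\iff d>1$ and $\lambda_B>1\iff d<1$. Hence for $d>1$ the reduced-problem multiplier satisfies $\lambda_B<1$, so $\diag(\bs)-\lambda_B\bI\succ\0$ and the $\tilde{x}_1=0$ candidate is globally optimal, giving the second bullet; for $d<1$ we get $\lambda_B>1$, which violates the PSD condition, so only the $\lambda=1$ candidate survives; at $d=1$ the two coincide since $\tilde{x}_1=\pm\sqrt{1-d}=0$. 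The step I expect to be the main obstacle is the degenerate \emph{hard case} $\lambda^\star=s_1$, which is exactly what $c_1=0$ triggers: I must argue that when $d\le1$ the null-space freedom in $\tilde{x}_1$ (the $\pm\sqrt{1-d}$) together with $\diag(\bs)-\bI\succeq\0$ certifies \emph{global}, not merely local, optimality. As a self-contained cross-check avoiding the PSD criterion, I would note that the objective value at any stationary point equals $\tfrac12\,\tilde{f}(\lambda)$ with $\tilde{f}(\lambda)=\lambda+\sum_{k>1}c_k^2/(\lambda-s_k)$, and compare $\tilde{f}(1)$ against $\tilde{f}(\lambda_B)$ directly to confirm the same threshold.
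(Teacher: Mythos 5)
Your proposal is correct, and its decisive step takes a genuinely different route from the paper's. Both arguments start identically: the stationarity condition $(1-\lambda)\tilde{x}_1=0$ yields the dichotomy $\lambda=1$ versus $\tilde{x}_1=0$, and the infeasibility of $\tilde{x}_1^2=1-d$ for $d>1$ disposes of the second bullet. Where you diverge is in certifying which branch wins when $d\le 1$. The paper compares objective values directly: it evaluates the Lagrangian of the full problem at $(\tilde\bx^{\star},\lambda=1)$ and the Lagrangian of the reduced problem at its optimal multiplier $\bar\lambda$, and uses concavity of the dual function $\lambda\mapsto\frac12\bigl(\lambda+\sum_{k>1}c_k^2/(\lambda-s_k)\bigr)$ on $(-\infty,s_2)$ to conclude the former does not exceed the latter --- this is exactly your ``self-contained cross-check.'' Your primary argument instead invokes the classical trust-region optimality certificate ($\diag(\bs)-\lambda^{\star}\bI\succeq\0$ together with stationarity and feasibility characterises global minimisers), plus a clean monotonicity argument locating $\lambda_B$ relative to $1$ via $g(1)=d$. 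This is arguably more transparent and handles the hard case $\lambda^{\star}=s_1$ in the standard way, but note that the \emph{sufficiency} of the certificate settles only the first bullet; to rule out the $\tilde{x}_1=0$ candidate when $d<1$ you also need the \emph{necessity} direction, which the paper never establishes (Lemmas~\ref{lem_root_lessthan1} and \ref{lem_minimumroot} only treat the nondegenerate case $c_k\neq 0$ and identify the optimal multiplier as the minimum root). So as written your main line imports an external result; either cite the Gay--Sorensen/Gander--Golub--von~Matt characterisation explicitly, or promote your cross-check (the direct comparison of $\tilde f(1)$ with $\tilde f(\lambda_B)$, which is the paper's own mechanism) from a sanity check to the actual proof of that step.
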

%

Proof of Lemma~\ref{lem_c1_zero} is presented in Appendix~\ref{sec:proof_lem_c1_zero}.

\subsection{Algorithm}

Steps to solve the QP over a sphere are summarised in Algorithm~\ref{alg_qp_sphere}. The algorithm first normalizes the parameters $\bb$ and $\bQ$, and converts the considered problem to a QP problem with a diagonal matrix $\diag(\bs)$, $s_1 = 1$ and $\bc^T \bc  = 1$.

Zero coefficients $\bc_l$, where $l>1$, are verified in order to simplify the problem to that with a fewer number of parameters of $\bs_{\calI}$ and $\bc_{\calI}$, where $\calI = \{1\}  \cup \{l : c_l \neq  0, l>1\}$ is the index set of ${1}$ and non-zeros $c_l$.

Next, identical eigenvalues, $s_k$, are identified and the problem is simplified again to the one with distinct eigenvalues. 

For the reduced problem with $\tilde{\bs}$ and $\tilde{\bc}$, the solution can be found in closed-form in the following particular cases 
 \begin{itemize}
 \item $\tilde{c}_1 = 0$
 \item $s_2 > s_1  =1$ 
\item  $d = \sum_{k>1} \frac{\tilde{c}_k^2}{(\tilde{s}_k - 1)^2} < 1$ .
\end{itemize}

In other cases, we find the lower and upper bounds of the minimiser $\lambda^{\star}$ by finding roots of two polynomials of degree-4 or by solving truncated equations with a few rational terms. The global minimiser is then found using an iterative algorithm in the estimated bounds.

\setlength{\algomargin}{1em}
\begin{algorithm}[t!]
\SetFillComment
\SetSideCommentRight
\CommentSty{\footnotesize}
\caption{Spherically Constrained Quadratic Programming (SCQP)}\label{alg_qp_sphere}
\DontPrintSemicolon \SetFillComment \SetSideCommentRight
\KwIn{$\bQ$ and $\bb$} 
\KwOut{$\bx$  minimises $\frac{1}{2} \, \bx^T \bQ \bx + \bc^T \bx$, s.t., $\bx^T \bx = 1$} 
\SetKwFunction{card}{card} 
\SetKwFunction{coreqps}{qps\_nnz} 
\Begin{
\nl Eigenvalue decomposition $\bQ = \bU \diag(\bsigma) \, \bU^T$\;
\nl $\bs = \frac{\boldsymbol{\sigma} - \sigma_1}{\|\bb\|} + 1$,  $\bc = \bU^T \frac{\bb}{\|\bb\|}$\;
\nl $\calI_0 = \{l :  c_l = 0, l>1 \}$, $\calI = \{1,\ldots, K\} \setminus \calI_{0}$ , $\tilde{K} = \card{$\calI$}$\;
\nl  $\tilde{\bs} = \bs_{\calI}$,  $\tilde{\bc} = \bc_{\calI}$   \;
\If{$\tilde{c}_1 = 0$}
{ 
\nl $d =  \sum_{k \in \calI } \frac{\tilde{c}_k^2}{(\tilde{s}_k  - 1)^2}$\;
\If {$d<1$ and $\tilde{s}_2 > \tilde{s}_1 = 1$}
{
\nl $\tilde{x}_1 =  1 \pm \sqrt{d}$, $\tilde{x}_k = \frac{\tilde{c}_k}{1-\tilde{s}_k}$, $k = 2, \ldots, \tilde{K}$
}
\Else
{
\nl $\tilde{\bx} = \coreqps(\tilde{\bs}(2:\tilde{K})  - \tilde{s}(2) + 1,\tilde{\bc}(2:\tilde{K}))$\;
\nl $\tilde{\bx}  = [0, \tilde{\bx}]$;
}
}
\Else
{
\nl $\tilde{\bx} = \coreqps(\tilde{\bs},\tilde{\bc})$
}
\nl $\bx_{\calI_0} = 0$, $\bx_{\calI} = \tilde{\bx}$\;
\nl $\bx \leftarrow \bU \, {\bx}$\;
}
\BlankLine
{\bf function} $\bx  = \coreqps(\bs,\bc)$\;
\KwIn{$\bs = [s_1 =1 \le s_2 \le \ldots \le s_K]$, unit-length vector $\bc$, $\bc^T \bc = 1$ and $c_k \neq 0$} 
\KwOut{$\bx$  minimises $ \frac{1}{2} \, \bx^T \diag(\bs) \bx + \bc^T \bx$, s.t., $\bx^T \bx = 1$} 
\Begin{
\nl Compute the roots $t_1$ and $t_2$ in $[{|c_1|},1)$ of the polynomials  \\
$p_i(t)  =  t^4 + 2  d_i \, t^3 + (d_i^2-1) \, t^2 - 2c_1^2 d_i \, t - c_1^2 d_i^2   $, where $d_1 = s_2 - s_1$ and $d_2 = s_K - s_1$\;
\nl Find a root $\lambda$ in $(1- t_1, 1- t_2)$ of 
$f(\lambda) = 1 - \sum_{k} \frac{c_k^2}{(\lambda - s_k)^2}$\;
\nl  $\bx = [x_1, \ldots, x_K]$ where $x_k =  \frac{c_k}{\lambda - s_k}$
}	
\end{algorithm}

\begin{figure}[t]
\centering
\begin{minipage}[b]{.48\textwidth}
\includegraphics[width=.48\textwidth, trim = 0.0cm 7cm 8.7cm 0cm,clip=true]{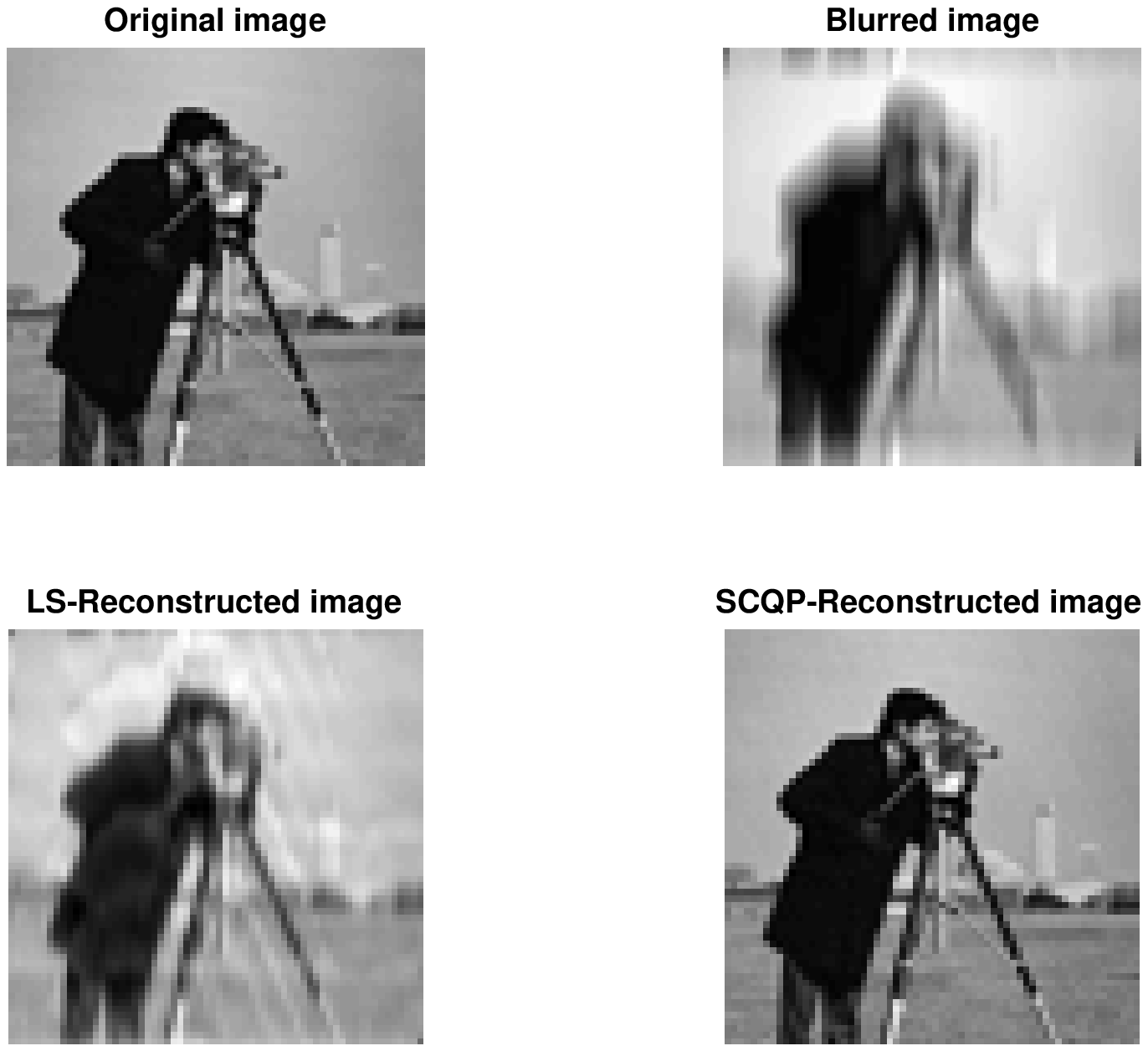}
\includegraphics[width=.48\textwidth, trim = 8.7cm 7cm 0cm 0cm,clip=true]{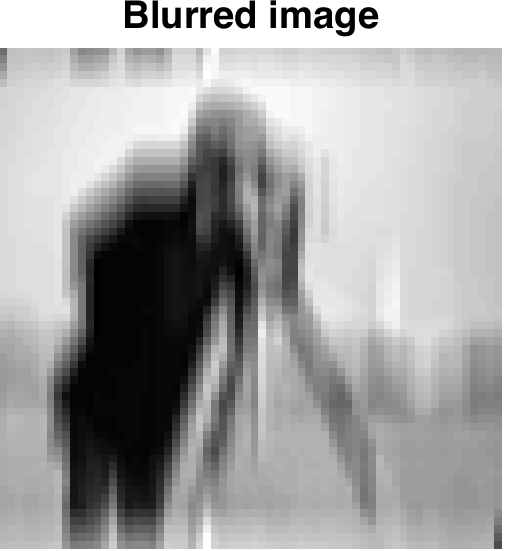}\\
\includegraphics[width=.48\textwidth, trim = 0.0cm 0cm 8.7cm 7cm,clip=true]{fig_sqp_deconvolution_cmp}
\includegraphics[width=.48\textwidth, trim = 8.7cm 0cm 0cm 7cm,clip=true]{fig_sqp_deconvolution_cmp}
\end{minipage}
\\
\includegraphics[width=.43\textwidth, trim = 0.0cm .0cm 0cm 0cm,clip=true]{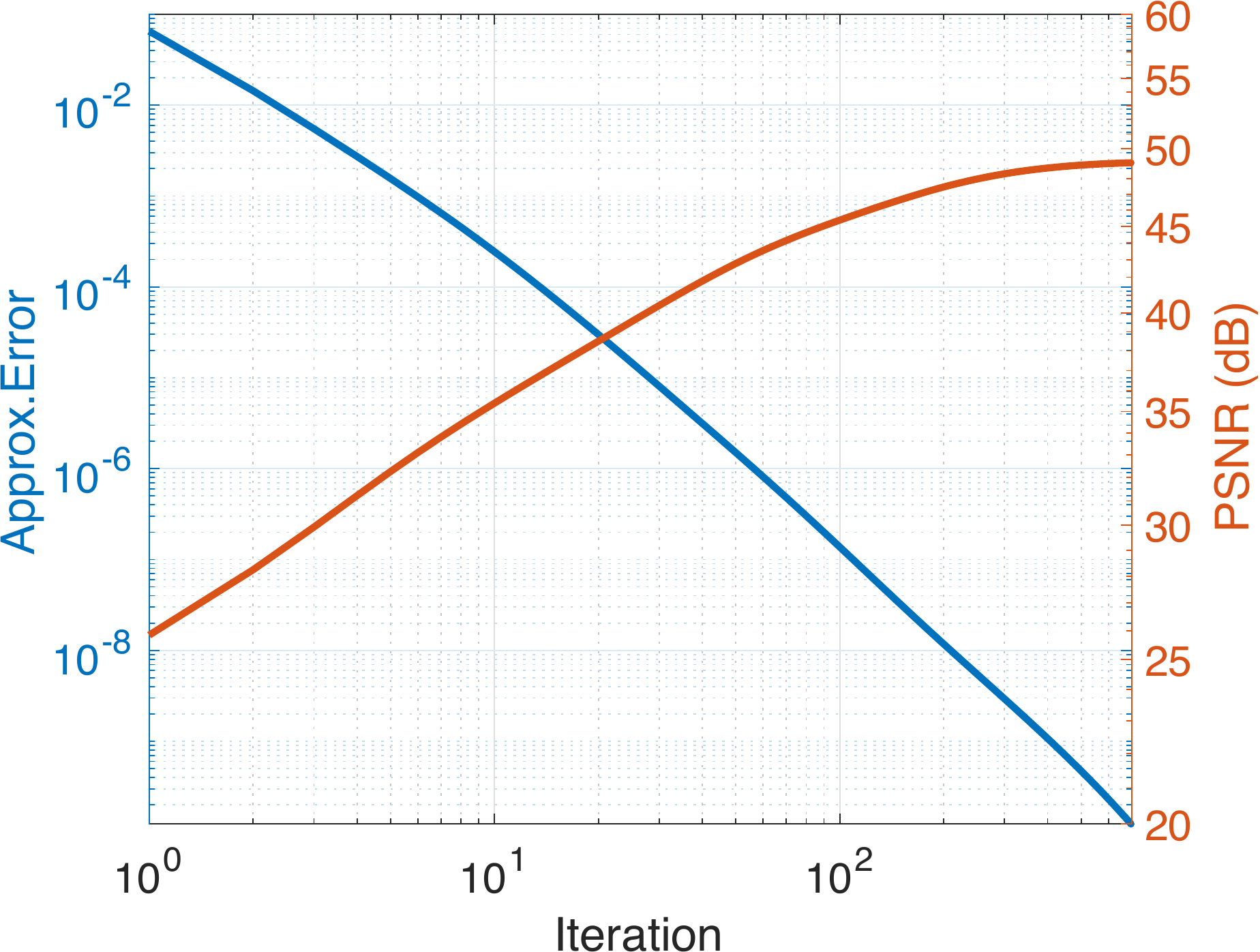}
\caption{Image deconvolution in Example~\ref{ex:im_deblur}. (Top) Comparison between reconstructed images using the regularization filtering and the SCQP-based reconstruction method. (Bottom) Objective function values and PSNR of the estimated image change over the iterations.}
\label{fig_deblur}
\end{figure}

\begin{example}[SCQP as a tool for image deconvolution]\label{ex:im_deblur}
\end{example}
This example demonstrates an application of the SCQP to image deconvolution.
Consider a grayscale image of size $64 \times 64$ (see Fig.~\ref{fig_deblur}(top)), where each pixel is blurred by vertical motion of the width of 5 pixels above and below
\be	
 	\by = \bH \, \bs \notag 
\ee
where $\bs$ and $\by$ are vectorisations of the original and blurred images, respectively, and $\bH$ is a sparse blurring matrix. We note that this matrix is of size 4096 $\times$ 4096, and has rank of 4094. In order to reconstruct the image $\bs$, 
one can apply the Wiener filtering or equivalently solve an optimisation problem which minimises the approximation error and the difference between each pixel and those surrounding it, i.e., to enhance smoothness in the image \cite{REEVES2014165}, as
\be
\min_{\hat{\bx}} \quad \|\by - \bD \hat{\bx} \|_2^2 + \mu \|\bL \, \hat{\bx}\|_2^2 \notag 
\ee 
where $\mu>0$, and $\bL$ is the discrete Laplacian, which plays a role of a high-pass filter. 
Different from the regularization filter, we express the estimated image as $\hat{\bx} = \alpha \bx$ where $\bx$ is a unit-length vector, $\bx^T \bx = 1$, and minimise the reconstruction error 
\be
\min_{\alpha,{\bx}}  \; \|\by - \alpha \bD {\bx} \|_2^2 =  \|\by\|_2^2 + \alpha^2 \,  {\bx^T \bQ \bx } - 2 \alpha \bb^T \bx	\label{eq_deblur_cost} ,\;\;
\mathrm{{s.t.}}  \;\;  \|\bx\|_2^2 = 1 \notag,
\ee
where $\bQ = \bD^T \bD$ and $\bb = \bD^T\by$. It is obvious that the optimal $\alpha^{\star}$ is given by 
\be
	\alpha^{\star} =  \frac{\bb^T \bx}{\bx^T \bQ \bx} \label{eq_deblur_alpha} \, ,
\ee
and $\bx$ is a solution to the following SCQP 
\be
&\min \quad   & {\bx^T \bQ \bx } + \frac{2}{\alpha}  \bb^T \bx  \,, \label{eq_deblur_x} \quad 
\mathrm{{s.t.}}  \quad \bx^T \bx = 1   \, .
\ee
Following this, we perform an alternating estimation process between $\bx $ and $\alpha$. We first initialize a unit-length vector $\bx$, compute $\alpha$ as in (\ref{eq_deblur_alpha}), then update $\bx$ by solving (\ref{eq_deblur_x}), and update $\alpha$ again. The process is executed until there is no significant change in the object function value. 

In Fig.~\ref{fig_deblur}, we show the reconstructed image using the regularisation filtering with $\mu = 0.0025$. The image achieved a PSNR = 24.6 dB. The image reconstructed using the SCQP based method obtained a PSNR =  49.05 dB after 672 iterations, as shown in Fig.~\ref{fig_deblur}(bottom). We note that the performance of the regularization filtering is affected by the choice of the regularisation parameter $\mu$.

\subsection{QP with inequality constraint $\bx^T \bx \le 1$}\label{sec:x_inside_sphere}

For completeness of this section, we now present the QP with an inequality quadratic constraint
\be
&\min  \quad  &\frac{1}{2} \, \bx^T \, \bQ \bx + \bb^T \bx  \label{equ_qp_in_sphere} , \quad \text{s.t.}  \quad   \bx^T \bx \le 1   \, .
\ee
First, the vector $\bx$ is expanded by an extra parameter $z_1$, where  $z_1^2 = 1 - \bx^T \bx \,$, to yield a new unit length vector $\bz = [z_1, x_1, \ldots, x_K]^T$.
The vector $\bz$ is a global minimiser to the following SCQP
\be
&\min  \quad  &\frac{1}{2} \, \bz^T \, \left[ \begin{array}{cc} 0 \\ & \bQ \end{array} \right]  \, \bz + \bb_z^T\, [0, \bb^T]  \notag , \quad
\text{s.t.}  \quad  \bz^T \bz  = 1 \notag  \, .
\ee
Or, in other words, $\tilde{\bz} = \left[\begin{array}{c} z_1 \\ \tilde{\bx}\end{array}\right]$ is a minimiser to a simplified problem 
\be
	&\min  \quad  &\frac{1}{2} \, \tilde{\bz}^T \, \diag(\bs_z) \, \tilde{\bz} + \bc_z^T\, \tilde{\bz} \notag  , \quad \text{s.t.}  \quad   \tilde{\bz}^T \tilde{\bz}  = 1 \notag 
\ee
where $\bc_z  = [0, \bc^T]^T$ and $\bs_z = \frac{1}{\|\bb\|} \,  [ 0, \bsigma]  + 1$.
%
%
%
Since the first entry $c_z(1)$ is zero, the problem falls into the case stated in Lemma~\ref{lem_c1_zero}. 
This can happen in two cases for
\be
d = \sum_{k = 1}^{K} \frac{c_k^2}{(s_z(k+1) - 1)^2} = \|\bb\|^2 \, \sum_{k = 1}^{K} \frac{c_k^2}{\sigma_k^2} 	\notag
\ee
\begin{itemize}
\item  If $d\le 1$, we obtain a solution $\displaystyle \tilde{x}_k = \|\bb\| \frac{c_k}{\sigma_k}$,
\item Otherwise, $\tilde{z}_1  = {z}_1 = 0$, and we solve a QP with the equality constraint $\bx^T \bx = 1$.
\end{itemize}

\begin{example}({\bf A toy example})\label{ex_in_sphere1}
We replicate Example~1 in \cite{ChenGao2013} for a minimization problem (\ref{equ_qp_in_sphere}) with 
$
	\bQ = \left[\begin{array}{cc}
		-1 & 0 \\
		0 & 1
	\end{array}
	\right]$, $
	\bb = \left[\begin{array}{c}
		0 \\ 1.8\end{array}
		\right] 
$.
In order to solve the problem, we expand $\bQ$ with one row and column of zeros, and $\bb$ with one zero entry.
The newly expanded matrix of $\bQ$ has  eigenvalues $[-1, 0, 1]$.
After the normalisation of $\bb$ and $\bQ$, we obtain the expanded parameters 
\be	
	\bc = [0, \frac{\bb}{\|\bb\|}] = [0, 0, 1]^T,		\notag
\ee
and the shifted eigenvalues
\be
	\bs = \frac{[-1, 0, 1]^T +1}{\|\bb\|}  + 1 =   [1, \, 1.5556,\, 2.1111]^T \, .	\notag
\ee
Since $c_2$ is zero, $\tilde{x}_2 = 0$, the problem boils down to finding the two variables $[\tilde{x}_1, \tilde{x}_3]$.
Since $c_1$ is zero, and $d = \frac{c_3^2}{(s_3-1)^2} = 0.81 < 1$, according to Lemma~\ref{lem_c1_zero}, 
\be
\tilde{x}_3 =  \frac{c_3}{s_3-1} = -0.9\notag
\ee and $\tilde{x}_1$ can take one of two values, $\tilde{x}_1 = \pm \sqrt{1 - \tilde{x}_3^2} = \pm 0.4359$.
Finally, we convert $\tilde{\bx} = [\pm 0.4359, 0, -0.9]^T$ to the original space of the expanded vector $\bx$ by multiplying it with the eigenvectors of $\bQ$ to give 
\be
	\bx = \left[\begin{array}{ccc}
		 0 &    1&     0\\
  		 1 &    0   &  0\\
          	0    & 0 &    1
	\end{array}
	\right] \, \tilde{\bx}  = [0, \pm 0.4359, -0.9]^T. 	\notag
\ee
That is, there are two global minimisers $[\pm 0.4359, -0.9]^T$. 
Fig.~\ref{fig_ex_sqp_n2} illustrates the solution of the problem, where the shaded region shows the objective function when the points $[x_1, x_2]$ are on a unit circle. 
\end{example}


\begin{figure}\centering
\includegraphics[width=.8\linewidth, trim = 0.0cm 2cm 0cm 0cm,clip=true]{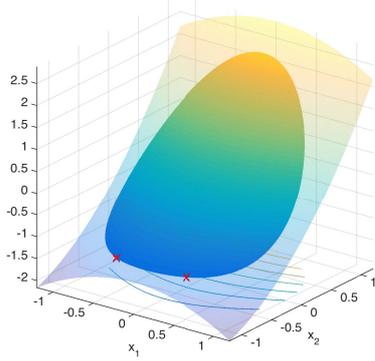}
\caption{Minimization of a quadratic function in a sphere in Example~\ref{ex_in_sphere1} has two global minimisers $[\pm 0.4359, -0.9]$, which are red ``cross'' points.}\label{fig_ex_sqp_n2}
\end{figure}

\begin{example}
We now change the vector $\bb$ in the previous example to $[0, 3]^T$. With this setting, the vector is still $\bc = [0, 0, 1]$, but the eigenvalues are $\bs = [  1, 1.3333,   1.6667]$.
Again, since $c_2 = 0$, we still have $\tilde{x}_2 = 0$.
However, because $d = \frac{c_3^2}{(s_3-1)^2} = 2.25 > 1$, according to Lemma~\ref{lem_c1_zero}, $\tilde{x}_1 = 0$. 
We need to find only $\tilde{x}_3$. For this particular case, it turns out that $\tilde{x}_3 = -1$. Finally the global minimiser is $[0, -1]^T$.
\end{example}

\section{SCQP with Matrix-variates}\label{sec:sqp_matrixvariate}

Consider an extension of the SCQP in (\ref{equ_qp_sphere}) for a matrix-variate. The problem can be formulated for a matrix $\bX$ of size $I\times R$ as 
\be
\min\quad f(\bX) = \frac{1}{2} \, \tr(\bX^T \bQ \bX)  + \tr(\bB^T \bX)  \quad \text{s.t.} \;\; \|\bX\|_F^2 = 1  \label{eq_spq_matrix}
\ee
where $\bQ$ is a psd matrix of size $I \times I$  and $\bB$ is of size $I \times R$.
A straightforward approach to (\ref{eq_spq_matrix}) is to rewrite it in the form of an ordinary SCQP for the vectorisation $\vtr{\bX}$, 
\be
&\min\quad &f(\bX) = \frac{1}{2} \vtr{\bX}^T (\bI_R  \otimes \bQ) \vtr{\bX}  +  \vtr{\bB}^T \vtr{\bX}  \notag \\ &\text{s.t.} \quad& \|\bX\|_F = 1 \, \notag,
\ee
and then apply the algorithm in the previous section to find $\bX$.
The symbol ``$\otimes$'' stands for the Kronecker product. 

An alternative method would be to rewrite the objective function in a form similar to  (\ref{equ_qp_sphere2}), as
\be
f(\bX) 
&=&  \frac{1}{2}  {\bx}^T (\bI_R \otimes \diag(\bsigma) )    {\bx}  +  \bv^T  {\bx} \notag 
\ee
where 
${ \bx} = \vtr{\bX^T \bU}$, $\bv = \vtr{\bB^T \bU}$ and $\bQ = \bU \diag(\bsigma)\bU^T$ is an EVD of $\bQ$. Due to the Kronecker product, each eigenvalue $\sigma_i$, $i = 1, \ldots, I$, is replicated $R$ times. 
Hence, according to Lemma~\ref{lem_sqp_simplify}, 
we can deduce the minimiser to (\ref{eq_spq_matrix}) from the minimiser $\bz^{\star}$ to an SCQP of a smaller scale 
\be
\min\quad \frac{1}{2} \, \bz^T  \diag(\bsigma) \bz  +  \bc^T \bz  \quad \text{s.t.} \quad \bz^T \bz = 1 \, \notag,
\ee
where  $\bc = [c_1, \ldots, c_I]$,  $c_i =  \|\bB^T \bu_i\|$. 
More specifically, $\bx_i = \displaystyle \frac{z_i}{c_i} \bB^T \bu_i$ for a nonzero coefficient $c_i$. Otherwise, $\bx_i$ is often a zero vector for a zero $c_i$, except only the case $c_1 = 0$. 

%
%

\section{SCQP for large scale data}\label{sec:largscaleSCQP}

The most computationally demanding step in the closed-form method for SCQP is the EVD of the matrix $\bQ$ of size $K \times K$. When the vector $\bx$ comprises hundreds of thousands of entries, this computation may not be executed in a computer. To this end, we convert the large scale SCQP to sub-problems of smaller scale, each of which can be solved in closed-form. 

First we partition the index set $\calI = [1, 2, \ldots, K]$ into $L$ disjoint segments $\calI_l$ of size $K_l$, $K = K_1 + K_2 + \cdots + K_L$,  such that EVDs of matrices of size $K_l \times K_l$ can be performed on a computer.
For each sub-vector $\bx_l = \bx(\calI_l)$,  we denote by $\alpha_l$  and $\tilde{\bx}_l$ its $\ell_2$-norm and normalized vector, $l = 1, \ldots, L$, respectively, i.e., $\bx(\calI_l)  = \alpha_l \, \tilde{\bx}_l$ where $\tilde{\bx}_l^T  \tilde{\bx}_l = 1$.
We also denote a complement set by $\calI_{\bar{l}} = \{1, \ldots, K\} \setminus \calI_l$. Similarly, we define $\alpha_{\bar{l}}$ and sub-vectors $\tilde{\bx}_{\bar{l}}$, $\bb_l$ and $\bb_{\bar{l}}$.
 Note that $\bx^T \bx = \alpha_l^2 + \alpha_{\bar{l}}^2 = 1$ and $\alpha_{\bar{l}} = \|\balpha_{m \neq l}\|_2$. Hence, the vector $ [\alpha_l, \alpha_{\bar{l}}]^T$ also has a unit length. 
 
For convenience, we consider again the SCQP problem for $\bx$
\be
\min_{\bx} \quad y = \frac{1}{2} \bx^T \bQ \bx + \bb^T \bx \quad \text{s.t.} \;\; \bx^T \bx = 1\,\label{eq_scqp_x}
\ee 
and rewrite it as SCQP sub-problems for unit-length vectors $[\alpha_l, \alpha_{\bar{l}}]^T$ and $\tilde{\bx}_{l}$, for $l = 1, \ldots, L$.
For example, an SCQP for only two parameters $[\alpha_l, \alpha_{\bar{l}}]^T$ is given by 
\be
\min &&    y  =  \frac{1}{2} [\alpha_l, \alpha_{\bar{l}}] \bT_l [\alpha_l\, \alpha_{\bar{l}}]^T +  [\alpha_l, \alpha_{\bar{l}}] \,   \bu  \label{eq_SCQP_alpha_l}\\
\text{s.t.} &&  \alpha_l^2 + \alpha_{\bar{l}}^2 = 1 \notag 
\ee
where $\bT_l = \left[\begin{array}{c|c} \tilde{\bx}_l^T \bQ_{l,l} \tilde{\bx}_l & \tilde{\bx}_l^T \bQ_{l,\bar{l}} \tilde{\bx}_{\bar{l}}  \\  \hline \tilde{\bx}_l^T \bQ_{l,\bar{l}} \tilde{\bx}_{\bar{l}}  & \tilde{\bx}_{\bar{l}}^T \bQ_{\bar{l},\bar{l}} \tilde{\bx}_{\bar{l}}  \end{array} \right]$ is of size $2 \times 2$ and $\bu = [\bb_{l}^T \tilde{\bx}_{l}, \bb_{\bar{l}}^T \tilde{\bx}_{\bar{l}}]^T$.
The above problem can be straightforwardly solved in a closed-form, while keeping $\tilde{\bx}_l$ fixed. Once $\alpha_l$ and $\alpha_{\bar{l}}$ are updated, the other scaling coefficients $\alpha_m$ for $m \neq l$ are then scaled by a factor of $\displaystyle \frac{\alpha_{\bar{l}}}{\|\balpha_{m \neq l}\|_2}$ 
\be
\alpha_{m \neq l} \leftarrow \frac{\alpha_{\bar{l}}}{\|\balpha_{m \neq l} \|_2}  \, \alpha_{m \neq l}   \,. \label{eq_update_alpha_m}
\ee
Next we rewrite the SCQP for $\bx$ in (\ref{eq_scqp_x}) as an SCQP for $\tilde{\bx}_{l}$, for $l = 1, \ldots, L$, while keeping the other parameters fixed as
\be
\min_{\tilde{\bx}_l} && y = \frac{\alpha_l^2}{2} \tilde{\bx}_l^T \bQ_{l,l} \tilde{\bx}_l  +  \alpha_l (\bb_l +   \bQ_{l,\bar{l}}  {\bx}_{\bar{l}})^T \tilde{\bx}_l   +  c_l\label{eq_SCQP_xl}\\
\text{s.t.} && \tilde{\bx}_l^T \tilde{\bx}_l = 1\,, \notag
\ee
where $c_l$ is independent of $\tilde{\bx}_l$. 
Because $\bQ_{l,l}$ are of relatively small sizes $K_l \times K_l$, update of $\tilde{\bx}_l$ can be proceeded in closed-form.

Finally, by alternating between the updates in (\ref{eq_SCQP_alpha_l}), (\ref{eq_update_alpha_m}) and (\ref{eq_SCQP_xl}), we can update entire parameters $\alpha_l$ and $\tilde{\bx}_l$. We summarise the update procedure in Algorithm~\ref{alg_qp_sphere_blockupdate}. For each partitioning of $[1, \ldots, K]$, EVDs of $\bQ_{l,l}$ are computed only once, then we perform an inner loop to update $\tilde{\bx}_l$ and $\alpha_l$ until there is no further improvement.

\setlength{\algomargin}{1em}
\begin{algorithm}[t!]
\SetFillComment
\SetSideCommentRight
\CommentSty{\footnotesize}
\caption{Block update for large scale SCQP}\label{alg_qp_sphere_blockupdate}
\DontPrintSemicolon \SetFillComment \SetSideCommentRight
\KwIn{$\bQ$ and $\bb$} 
\KwOut{$\bx$  minimises $\frac{1}{2} \, \bx^T \bQ \bx + \bc^T \bx$, s.t., $\bx^T \bx = 1$} 
\SetKwFunction{card}{card} 
\SetKwFunction{coreqps}{qps\_nnz} 
\Begin{
\Repeat{a stopping criterion is met}{
\nl Partition $\{1, 2, \ldots, K\}$ into $L$ disjoint segments in random\;
\nl Precompute EVD of $\bQ_{l,l}$ for $l = 1, \ldots, L$\;
\Repeat{a stopping criterion is met}{
\For {$l = 1, \ldots, L$}{
\nl Solve SCQP in (\ref{eq_SCQP_xl}) to update $\tilde{\bx}_l$ of length $K_l$ \;
\nl Solve SCQP in (\ref{eq_SCQP_alpha_l}) to update $[\alpha_l, \alpha_{\bar{l}}]$ \;
\nl Adjust $\alpha_{m \neq l} \leftarrow \frac{\alpha_{\bar{l}}}{\|\alpha_{m \neq l} \|_2}  \, \alpha_{m \neq l}$\; 
}
}
}
}
\end{algorithm}

\section{Linear Regression with Bound Constraint}\label{sec:regression}

Another problem, which can be formulated as SCQP, is the linear regression with a constrained bound on the regression error 
\be
\min_{\bx}  \quad & \|\bx\|^2   \quad 
\text{subject to} &   \|\by - \bA \bx \| \le  \delta  \label{eq_linreg_bound} \ , 
\ee 
where $\by$ is a vector of length $I$ of dependent variables, $\bA$ is a regressor matrix of size $I \times K$ and $\delta$ a nonnegative regression bound. 

It is obvious that if $\delta \ge \|\by\|$, then the zero vector $\bx = \0$ is a minimiser to (\ref{eq_linreg_bound}).
Therefore, in order to achieve a meaningful regression, the regression bound $\delta$ needs to be in the following range. 

\begin{lemma}[Range of the bound $\delta$]
The problem (\ref{eq_linreg_bound}) has a minimiser of nonzero entries when 
\be
\|{\bPi}_{\bA}^{\perp}  \, \by \|  \le \delta <  \| \by\|		\notag
\ee
where  ${\bPi}_{\bA}^{\perp}$ is an orthogonal complement of the column space of $\bA$.
\end{lemma}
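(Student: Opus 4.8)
The plan is to characterise the feasible set of (\ref{eq_linreg_bound}) through the orthogonal decomposition of $\by$ relative to the column space of $\bA$, and then read off both inequalities directly. Write $\bPi_{\bA} = \bI - \bPi_{\bA}^{\perp}$ for the orthogonal projector onto the column space of $\bA$. Since $\bA\bx$ lies in that column space for every $\bx$, the Pythagorean identity gives $\|\by - \bA\bx\|^2 = \|\bPi_{\bA}\by - \bA\bx\|^2 + \|\bPi_{\bA}^{\perp}\by\|^2$, so the residual norm is bounded below by $\|\bPi_{\bA}^{\perp}\by\|$, with equality attained whenever $\bA\bx = \bPi_{\bA}\by$ (which is solvable, because $\bPi_{\bA}\by$ belongs to the column space). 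Hence the minimal attainable residual equals $\|\bPi_{\bA}^{\perp}\by\|$.

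First I would use this fact to settle feasibility: the constraint set $\{\bx : \|\by - \bA\bx\| \le \delta\}$ is nonempty if and only if $\delta \ge \|\bPi_{\bA}^{\perp}\by\|$, which is exactly the lower inequality in the claimed range. Next I would establish existence of a minimiser. The feasible set is closed (a preimage of a closed ball under an affine map) and convex, while the objective $\|\bx\|^2$ is continuous, strictly convex and coercive; a coercive continuous function attains its infimum on any nonempty closed set, so a unique minimiser $\bx^{\star}$ exists whenever the lower inequality holds.

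Finally, the upper inequality forces $\bx^{\star}$ to be nonzero. The zero vector is feasible precisely when $\|\by - \bA\,\0\| = \|\by\| \le \delta$; since we assume $\delta < \|\by\|$, the vector $\0$ violates the constraint, so every feasible point — in particular $\bx^{\star}$ — is a nonzero vector. Combining the three steps shows that under $\|\bPi_{\bA}^{\perp}\by\| \le \delta < \|\by\|$ the problem possesses a nonzero minimiser. I would also record the converse to justify that the stated window is exact: if $\delta < \|\bPi_{\bA}^{\perp}\by\|$ the feasible set is empty, and if $\delta \ge \|\by\|$ then $\0$ is feasible and hence optimal.

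The routine parts are the Pythagorean splitting and the feasibility equivalence; the only point that needs a little care is the existence argument, because the feasible set can be unbounded when $\bA$ has a nontrivial null space, so I would lean on coercivity of $\|\bx\|^2$ (a bounded minimising sequence with a convergent subsequence in finite dimension) rather than on compactness of the feasible region. That coercivity step is the main, and fairly mild, obstacle.
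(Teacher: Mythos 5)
Your proof is correct and rests on the same Pythagorean/orthogonal decomposition of the residual that the paper uses; the paper's own proof consists solely of the chain $\delta^2 \ge \|\by-\bA\bx\|^2 = \|\bU^T\by-\bU^T\bA\bx\|^2 + \|{\bPi}_{\bA}^{\perp}\by\|^2 \ge \|{\bPi}_{\bA}^{\perp}\by\|^2$ establishing the lower bound, with the upper bound $\delta<\|\by\|$ handled only by the remark preceding the lemma that $\0$ is otherwise a minimiser. Your additional steps --- attainability of the lower bound, the coercivity-based existence argument (which the paper omits entirely), and the explicit infeasibility of $\0$ when $\delta<\|\by\|$ --- are all sound and make the argument more complete than the published one.
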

\begin{proof}
Let $\bU$ be an orthogonal basis for the column space of $\bA$. Then 
\be
	\delta^2 \ge \|\by - \bA \bx \|^2 = \|\bU^T \by - \bU^T \bA \bx\|_F^2 + \|{\bPi}_{\bA}^{\perp}  \, \by\|^2  \ge \|{\bPi}_{\bA}^{\perp}  \, \by\|^2 \,. \notag 
\ee
\end{proof}

For simplicity, we assume that $\bA$ is full rank matrix, otherwise, we solve the problem with a compressed regressor matrix with a smaller bound 
\be
\min\quad \|\bx\|^2 \quad \text{subject to} \quad \|\hat{\by} - \hat{\bA} \bx\|  \le  \hat{\delta}	\notag
\ee	
where $\hat{\by} = \bU^T\by$, $\hat{\bA} = \bU^T \bA$, and $\hat{\delta}^2 = \delta^2 - \|{\bPi}_{\bA}^{\perp}  \, \by\|^2$.

We shall now derive an equivalent SCQP to the problem in (\ref{eq_linreg_bound}). We first show that the inequality sign in (\ref{eq_linreg_bound}) can be replaced by the equal sign.
\begin{lemma}\label{lem_linreg_circle} 
The minimiser to (\ref{eq_linreg_bound}) is the minimiser to the following problem 
\be
\min_{\bx}  \quad & \|\bx\|^2 \quad 
\mathrm{{s.t.}} &   \|\by - \bA \bx \|  =  \delta   \label{eq_linreg_circle}
\ee
\end{lemma}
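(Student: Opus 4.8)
The plan is to show that the inequality constraint in (\ref{eq_linreg_bound}) is necessarily \emph{active} at any minimiser, i.e.\ that $\|\by - \bA\bx^{\star}\| = \delta$ holds with equality. Once this is established, the equivalence of the two problems follows immediately: the feasible set of (\ref{eq_linreg_circle}) is a subset of that of (\ref{eq_linreg_bound}), so the minimum of (\ref{eq_linreg_circle}) is no smaller; but a minimiser $\bx^{\star}$ of (\ref{eq_linreg_bound}) that happens to satisfy the equality is feasible for (\ref{eq_linreg_circle}) and attains the same objective value, forcing the two minima to coincide and the same point to solve both.

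To prove activeness I would argue by contradiction using a radial scaling. Suppose $\bx^{\star}$ minimises (\ref{eq_linreg_bound}) but $\|\by - \bA\bx^{\star}\| < \delta$ strictly. First note that $\bx^{\star} \neq \0$: by the range restriction $\|\bPi_{\bA}^{\perp}\by\| \le \delta < \|\by\|$ established in the preceding lemma, the point $\bx = \0$ has residual $\|\by\| > \delta$ and is therefore infeasible, so the minimiser cannot be the zero vector. Now consider the one-parameter family $\bx(t) = t\,\bx^{\star}$ for $t \in [0,1]$ together with the scalar function $g(t) = \|\by - t\,\bA\bx^{\star}\|$. Since $g$ is continuous, $g(1) < \delta$ by assumption, and $g(0) = \|\by\| > \delta$, the intermediate value theorem yields some $t^{\star} \in (0,1)$ with $g(t^{\star}) = \delta$. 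The point $t^{\star}\bx^{\star}$ is then feasible for (\ref{eq_linreg_bound}), yet $\|t^{\star}\bx^{\star}\|^2 = (t^{\star})^2\|\bx^{\star}\|^2 < \|\bx^{\star}\|^2$ because $t^{\star} < 1$ and $\bx^{\star} \neq \0$. This strictly smaller objective contradicts the optimality of $\bx^{\star}$, so the constraint must be active.

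The main obstacle, such as it is, lies in guaranteeing that the two endpoints of the scaling path straddle $\delta$: the endpoint value $g(0) = \|\by\| > \delta$ is precisely the content of the range restriction $\delta < \|\by\|$, which is why that hypothesis, together with the exclusion of $\bx^{\star} = \0$, is indispensable. Beyond this, the argument is elementary. One could alternatively invoke the KKT conditions for $\min \|\bx\|^2$ subject to $\|\by-\bA\bx\|^2 \le \delta^2$ and observe that stationarity reads $\bx = \mu\,\bA^{T}(\by-\bA\bx)$, so a multiplier $\mu = 0$ would force $\bx = \0$ (infeasible); hence $\mu > 0$ and complementary slackness makes the constraint active. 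I prefer the radial-scaling route, however, since it sidesteps any constraint-qualification bookkeeping and treats possible non-uniqueness of the minimiser transparently.
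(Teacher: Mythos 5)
Your proof is correct, but it takes a genuinely different route from the paper's. You establish activeness of the constraint globally, by scaling the whole candidate minimiser radially: since $\0$ is infeasible (because $\delta < \|\by\|$), the continuous function $g(t)=\|\by - t\,\bA\bx^{\star}\|$ must cross $\delta$ somewhere on $(0,1)$ if $g(1)<\delta$, producing a feasible point of strictly smaller norm. The paper instead argues coordinate-wise: after assuming (by dropping columns of $\bA$) that all entries of $\bx^{\star}$ are nonzero, it freezes all coordinates but $x_1$, writes the constraint as an upward-opening quadratic $c(x_1)$ whose two roots $t_{-},t_{+}$ satisfy $t_{-}t_{+}=(\|\bz\|^2-\delta^2)/\|\ba_1\|^2>0$ and hence share a sign, so the feasible interval for $x_1$ excludes the origin and the minimiser of $x_1^2$ over it is an endpoint, i.e.\ the constraint is active. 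Your argument is the more economical one for the lemma itself: it needs no reduction to nonzero entries, no case analysis on the roots of a quadratic, and it handles non-uniqueness transparently; the one hypothesis it leans on, $\delta<\|\by\|$, is exactly the standing range restriction from the preceding lemma (and the paper's proof uses it too, implicitly, when it asserts $\|\bz\|^2>\delta^2$). What the paper's coordinate-wise formulation buys is not the lemma but the machinery reused immediately afterwards in Section~\ref{sec:case2}, where the same one-coordinate (or one-block) subproblem with an active constraint becomes the update step of the iterative algorithm for $K>I$. Your closing remark about the KKT alternative is also sound, and your stated reason for preferring the scaling argument over it is reasonable.
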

See the proof in Appendix~\ref{sec:proof_lem_linreg_circle}.

%



The proposed algorithms to solve the problem in (\ref{eq_linreg_circle}) are presented for two cases, when the length of $\bx$ does not exceed the number of dependent variables, $K\le I$, and when $K > I$.

\subsection{The case when $ K \le I$}\label{sec:case1}

We first consider the case when the matrix of regressors $\bA$ is of full column rank, $ K \le I$.
Let $\bA = \bU \diag(\bs) \bV^T$ be an SVD of $\bA$, where  $\bV$ is an orthonormal matrix of size $K \times K$, and $\bs = [s_1, \ldots, s_K]> 0$. Hence ${\bPi}_{\bA}^{\perp}  = \bI - \bU \, \bU^T$.

Let $\hat{\by} = \bU^T \by$,  $\hat{\delta} = \sqrt{\delta^2  -   \|{\bPi}_{\bA}^{\perp}  \, \by\|^2}$, 
$\bz = \displaystyle \frac{1}{\hat{\delta}} (\hat{\by} -  \diag(\bs) \bV^T \bx)$, then 
\be
\bx &=& \bV \diag(\bs^{-1})(\hat{\by}- \hat{\delta} \bz) 	\notag \\
\|\bx\|_F^2 &=&  (\hat{\by} - \hat{\delta} \bz)^T \diag(\bs^{-2})  (\hat{\by} -\hat{\delta}  \bz) \notag \\
\|\by - \bA \bx \|^2 &=&  \|\bU^T \by - \diag(\bs) \bV^T \bx\|_F^2 + \|{\bPi}_{\bA}^{\perp}  \, \by\|^2  \notag \\ &=& 
\hat{\delta}^2\, \|\bz\|^2 + \|{\bPi}_{\bA}^{\perp}  \, \by\|^2 \, .\notag 
\ee
By this reparameterization, the problem in (\ref{eq_linreg_circle}) becomes an SCQP which can be solved in closed-form
\be
\min_{\bz}  \quad    \bz^T \diag(\hat{\delta} \bs^{-2}) \bz - 2  \, \hat{\by}^T \diag(\bs^{-2}) \bz    \notag  \quad
\text{s.t.} \;\;  \bz^T \bz  = 1 \notag.
\ee
 
\subsection{The case when $K > I$}\label{sec:case2}
  
For this case, we develop an iterative algorithm, at each iteration, wherebt the problem (\ref{eq_linreg_circle}) is rewritten as a subproblem with an invertible regressor matrix. 
To this end, we first generate an initial feasible point $\bx^{(0)}$ such that $\|\by - \bA \bx^{(0)}\| = \delta$.

We then select a sub matrix $\bA_{\calI}$ of $\bA$ such that $\bA_{\calI}$ is invertible, $2 \le \text{card}(\calI) \le I$  and at least one entry $x_{k\in {\calI}}^{(0)}$ is non-zero.
Let $\bz_{\calI}^{(0)} = \by - \sum_{k \notin \calI} \ba_k x_k^{(0)}$, then 
\be
	\|\by - \bA\bx\| = \|\bz_{\calI}^{(0)} - \bA_{\calI} \, \bx_{\calI}^{(0)} \|  = \delta.\notag 
\ee
By fixing the parameters $x_k$, $k \notin \calI$,  the new estimate $\bx_{\calI}^{(1)}$ of $\bx_{\calI}$ is a minimiser to the following problem  
 \be
\bx_{\calI}^{(1)} = \argmin_{\bx_{\calI}} \quad \|\bx_{\calI}\|^2 \quad \text{subject to} \quad \|\bz_{\calI}^{(0)} - \bA_{\calI}\bx_{\calI}\|  =  \delta.	\notag
 \ee
Since $\bA_{\calI}$ is invertible and $\bx_{\calI}^{(0)}$ is a non-zero point which holds the constraint, the above constrained QP has a non-zero global minimiser, which can be solved in closed-form as in the case in Section~\ref{sec:case1}. 
As a result of this update, the new estimate $\bx^{(1)}$ still holds the constraint, and 
\be
\|\bx^{(1)}\|^2= \|\bx^{(0)}\|^2 -  \|\bx_{\calI}^{(0)}\|^2  + \|\bx_{\calI}^{(1)}\|^2 \le  \|\bx^{(0)}\|^2 \,.	\notag
\ee
The algorithm then selects another index set $\calI$, and continues updating the entries $\bx_{\calI}$  by a non-zero $\bx_{\calI}^{(t)}$. This alternating update scheme generates a sequence of estimates $\bx^{(t)}$, which preserve the constraint $\|\by - \bA\bx^{(t)}\| = \delta$, while keeping their norm non-increasing,  $\|\bx^{(t)}\| \le \|\bx^{(t-1)}\| \le \ldots \le \|\bx^{(0)}\|$.

The linear regression with a bound error constraint has found novel applications in the error preserving correction methods for the Canonical Polyadic tensor Decomposition or the CPD with bounded norm of rank-1 tensors\cite{Phan_CPnormcorrection}.

 \section{Quadratic Programming with Elliptic Constraints}\label{sec:qpqc}

Consider a QP with multiple quadratic constraints, each representing an ellipsoid, so that the feasible set is an intersection of the ellipsoids. This problem has been extensively studied in the literature, and arise in many applications in phase recovery, power flow, MIMO detection, quadratic-assignment, sensor-network localization, max-cut problems. For comprehensive review of the problem and its applications, we refer to \cite{Nesterov2000,DBLP:journals/spm/LuoMSYZ10}.

\begin{definition}[Quadratic programming over ellipsoids]\label{prob_qpss}
Consider a positive semi-definite matrix $\bQ$ of size $K \times K$, a  vector $\bb$ of length $K$, and a set of $M$ positive semi-definite matrices $\bH_m$. The quadratic programming over ellipsoids solves the optimisation problem 
\be
&\min  \quad  &\frac{1}{2} \, \bx^T \, \bQ \bx + \bb^T \bx  \label{equ_qp_qc} \\
&\mathrm{{s.t.}}  \quad & \bx^T \, \bH_m \, \bx = 1, \quad m = 1, \ldots, M \, . \notag 
\ee
\end{definition}
The constraints in the above programming are given in a simple form without linear terms as in the objective function. In practice, however, the full quadratic forms can be converted to the homogenised form of the parameter vector $\left[1, \bx^T\right]^T$, e.g., $\left[1,\bx^T\right] \left[\begin{array}{@{}c@{\hspace{1ex}}c@{}} 0 & \bb^T\\[-1ex] \bb & \bQ\end{array}  \right] \, \left[\begin{array}{@{}c@{\hspace{1ex}}c@{}} 1 \\[-1ex] \bx\end{array}  \right] $ \cite{Nesterov2000}. 

In addition, the case with inequality constraints, i.e., $\bx^T \bH_m \bx \le 1$, can also be converted to the equality constraints by introducing additional variables $\bs = [s_1, s_2, \ldots, s_M]$ such that 
\be
1 = \bx^T \bH_m \bx + s_m^2 = [\bx^T, \bs^T] \left[\begin{array}{cc}\bH_m \\[-1ex]  & \ve_m \ve_m^T \end{array}\right]  \left[\begin{array}{@{}c@{}c@{}}\bx \\[-1ex]  \bs \end{array}\right]  \,.\notag 
\ee 

For the above quadratically constrained quadratic programming (QCQP) problem, we can apply relaxations to find approximate solutions, e.g., the Lagrangian and Semidefinite Programming (SDP) based relaxations. The SDP relaxation introduces a symmetric matrix of rank-1, $\bX = \bx \bx^T$, 
and relaxes the condition to the semidefinite condition $\bX \succeq \bx \bx^T$.
The quadratic objective and constraint functions can then be rewritten in linear form, see \cite{Baron72,Kim2003,Bao2011} for relaxations for QCQP.

Different from the existing methods, we introduce an augmented Lagrangian based algorithm for the problem in (\ref{equ_qp_qc}). The constraints over multiple ellipsoids are interpreted as a constraint over a sphere and an orthogonal projection. In order to achieve this, we define symmetric matrices $\bD_n = \bH_1 - \bH_{n+1}$, and rewrite the optimisation problem in (\ref{equ_qp_qc}) in the form of 
\be
&\min  \quad  &\frac{1}{2} \, \bx^T \, \bQ \bx + \bb^T \bx  \notag \\
&\text{s.t.}  \quad & \bx^T \, \bH_1 \, \bx = 1,  \; \;
   \bx^T \, \bD_n \, \bx = 0 , \quad n = 1, \ldots, N\notag 
\ee
or in the following form 
\be
&\min  \quad  &\frac{1}{2} \, \tilde{\bx}^T \, \tilde{\bQ} \, \tilde{\bx} + {\tilde{\bb}}^T \, \tilde{\bx}  \label{equ_qp_sphere_proj_td} \\
&\text{s.t.}  \quad & \tilde{\bx}^T \tilde{\bx} = 1 \;\;\text{and}\quad
  \tilde{\bx}^T \, \tilde{\bD}_n \, \tilde{\bx} = 0 , \quad n = 1, \ldots, N\notag 
\ee
after a reparameterization 
\begin{align}
	\tilde{\bx} &= \bF^T \bx 	\, , 	&\tilde{\bQ} &=  \bF^{-1}  \bQ \bF^{-1\,  T}  \, ,\notag\\
	\tilde{\bb} &= \bF^{-1}  \bb \,, 	&\tilde{\bD}_n &= \bF^{-1}  \bD_n  \bF^{-1\, T} \notag,
\end{align}
where $\bF$ is the Cholesky factor matrix of $\bH_1 = \bF \, \bF^T$.

For simplicity of notation, we will solve the problem in (\ref{equ_qp_sphere_proj_td}) with parameters $\bQ$, $\bZ$ and $\bb$ and variables $\bx$ with $\bx^T \bx = 1$, that is
\be
&\min  \quad  &\frac{1}{2} \, {\bx}^T \, {\bQ} \, {\bx} + {{\bb}}^T \, \tilde{\bx}  \label{equ_qp_sphere_proj} \\
&\text{s.t.}  \quad & {\bx}^T {\bx} = 1, \notag \\
&\quad &  {\bx}^T \, {\bD}_n \, {\bx} = 0 , \quad n = 1, \ldots, N\, .\notag 
\ee

\subsection{An Augmented-Lagrangian algorithm for QCQP}

  \setlength{\algomargin}{1em}
\begin{algorithm}[t!]
\SetFillComment
\SetSideCommentRight
\CommentSty{\footnotesize}
\caption{Augmented Lagrangian Algorithm for QCQP}\label{alg_qpqc}
\DontPrintSemicolon \SetFillComment \SetSideCommentRight
\KwIn{Matrices $\bQ$, $\bD_1$, \ldots, $\bD_N$  and $\bb$} 
\KwOut{$\bx$  minimises \quad $\frac{1}{2} \, \bx^T \bQ \bx + \bb^T \bx$, \quad s.t., \,\,$\bx^T \bx = 1$,\,\, $\bx^T \bD_n \bx = 0$} 
\SetKwFunction{card}{card} 
\SetKwFunction{qps}{QPS} 
\SetKwFunction{projD}{${\tt{\Pi}}_{\bD}^{\perp}$} 
\Begin{
\nl Initialize $\by$ and $\bz$ as zero vectors and $\gamma > 0$\;
\Repeat{a stopping criterion is met}{
\nl $\bT =  {\tt{reshape}}(\bz-  \frac{\by}{\gamma},[K \times K])$, $\bT_x = \frac{1}{2}(\bT + \bT^T)$\;
\nl $\bx = \qps(\bQ+\gamma \bI - 2 \gamma \bT_x, \bb)$\;
\nl $\bz = \projD(\bx \otimes \bx + \frac{\by}{\gamma})$\;
\nl $\by \leftarrow \by + \gamma (\bx \otimes \bx - \bz)$\;
\nl Adjust $\gamma \leftarrow \alpha \gamma $ if the objective function tents to a slow convergence
}
}	
\end{algorithm} 

In order to solve the QCQP in (\ref{equ_qp_sphere_proj}), we split the problem into two subproblems, each with a single constraint, by introducing an additional variable $\bz$
\be
&\min \quad &f(\bx)  +  g(\bz) \label{equ_f_g_x_z}   
\quad \text{s.t.} \quad  \bz - \bx \otimes \bx = \0  
\ee
where 
$f(\bx)$ is the function of $\bx$ over a sphere for the optimization problem 
\be
&\min  &\frac{1}{2} \bx^T \, \bQ \, \bx + \bb^T \, \bx \notag \,, \quad
\text{subject to}  \quad  \bx^T  \, \bx = 1, \notag 
\ee
and $\bg(\bz)$ represents the projection onto subspace span by orthogonal complement of $\bD = [\vtr{\bD_1}, \ldots, \vtr{\bD_N}]$, 
$	\bD^T \bz = \0$.

The augmented Lagrangian function of the problem (\ref{equ_f_g_x_z}) is given by 
\be
\calL(\bx, \by, \bz) = f(\bx) + g(\bz)  + \by^T (\bx \otimes \bx - \bz) + \frac{\gamma}{2} \|\bx \otimes \bx - \bz \|^2 	\notag
\ee
where $\gamma >0$. The algorithm consists of update rules for the variables $\bx$, $\bz$ and $\by$ 
\be
\bx &=& \argmin  \quad  f(\bx)  +  \by^T (\bx \otimes \bx - \bz) + \frac{\gamma}{2} \|\bx \otimes \bx - \bz \|^2  		\, \label{prob_solve_x} \,,\\
\bz &=& \argmin  \quad  g(\bz)  +  \by^T (\bx \otimes \bx - \bz) + \frac{\gamma}{2} \|\bx \otimes \bx - \bz \|^2  		\, \label{prob_solve_z}  \,,\\
\by &\leftarrow& \by + \gamma (\bx \otimes \bx - \bz) \, .
\ee

\subsection{Estimation of $\bx$}\label{sec::estimate_x}

The optimisation problem in (\ref{prob_solve_x}) can indeed be written as an SCQP, as follows 
\begin{align}
\bx &= \argmin  \; f(\bx)  +  \by^T (\bx \otimes \bx - \bz) + \frac{\gamma}{2} \|\bx \otimes \bx - \bz \|^2  		\,\notag \\
&= \argmin   \;  f(\bx)  + \frac{\gamma}{2} \|\bx \otimes \bx - \bz   +   \frac{\by}{\gamma}\|^2  		\, 
\label{equ_update_x_1} \\
&= \argmin   \;  f(\bx)  + \frac{\gamma}{2}  (\bx^T \bx)^2  - \gamma (\bx \otimes \bx)^T (\bz   -   \frac{\by}{\gamma}) \notag \\
&= \argmin \;  \frac{1}{2} \, \bx^T \, (\bQ  + \gamma \bI - 2\gamma \bT_x) \, \bx + \bb^T \bx   \quad \text{s.t.} \; {\bx^T \bx = 1} \,,\;\label{equ_update_x}
\end{align}
where $\bT_x = \frac{\bT + \bT^T}{2}$ is a symmetric matrix of size $K \times K$, 
$\bT = {\tt{reshape}}(\bz-  \frac{\by}{\gamma},[K \times K])$.
At each iteration to update $\bx$, we reshape the vector $(\bz- \frac{\by}{\gamma})$ to a matrix of size $K \times K$, and then construct  a symmetric matrix $\bT_x$. The matrix $\bQ$ is changed by a term $\gamma \bT_x$, while the vector $\bb$ is preserved. 
The last equation indicates that the vector $\bx$ can be found in closed-form using Algorithm~\ref{alg_qp_sphere}.

\subsection{Estimation of $\bz$}\label{sec::estimate_z}

The vector $\bz$ is updated as a minimiser to the following problem 
\be
\bz &=& \argmin  \quad  g(\bx)  +  \by^T (\bx \otimes \bx - \bz) + \frac{\gamma}{2} \|\bx \otimes \bx - \bz \|^2  		\,\notag\\
&=& \argmin  \quad  g(\bx) +  \frac{\gamma}{2} \|\bz - \bx \otimes \bx - \frac{\by}{\gamma} \|^2  		\,\notag\\
&=& \argmin  \quad   \|\bz - \bx \otimes \bx - \frac{\by}{\gamma} \|^2  	\qquad \text{s.t.}	 \;\;  \bD^T \bz = 0  \notag \\
&=&  {\tt{\Pi}}_{\bD}^{\perp}(\bx \otimes \bx + \frac{\by}{\gamma} ) \, , \label{equ_update_z}
\ee
  where ${\tt{\Pi}}_{\bD}^{\perp}(\bz)$ is the orthogonal projection of the vector $\bz$ onto the orthogonal complement of the column space of $\bD$, e.g., 
  \be
  {\tt{\Pi}}_{\bD}^{\perp}(\bz) = \bz - \bD \, (\bD^T\, \bD)^{-1} \, \bD^T \, \bz. \notag 
\ee

\subsection{Algorithm for QCQP}
  
An augmented Lagrangian based algorithm for QCQP including the updates in (\ref{equ_update_x}) and (\ref{equ_update_z}) is summarised in Algorithm~\ref{alg_qpqc}. The vectors $\by$ and $\bz$ are initialised as zeros, while the parameter $\gamma$ is set to a sufficiently high value. 
Experiments show that running the algorithm with a small $\gamma$ at the beginning will decrease the objective function quickly,
but it may make the algorithm unstable after several to a dozen of iterations. 
However, setting $\gamma$ to a too large value will slow down the convergence of the algorithm.
In order to obtain a good setting, we should run the algorithm for a few iterations for various values of $\gamma$,
then choose the setting which gives a good convergence result. The algorithm is then executed using the chosen parameters.
In our experience, $\gamma$ can be set to a fraction of the minimum condition number of $\bH_m$, while the associated matrix $\bH_m$ should be chosen to present the quadratic constraint, i.e., $\bx^T \bH_m \bx = 1$.

During the estimation process, the high value of parameter $\gamma$ should be reduced if the objective function becomes stable, or yields a slow convergence.
However, reducing $\gamma$ too much can cause a divergence, and the parameters should be corrected.

 \subsection{Linearisation for the update of $\bx$}\label{sec::linearisation_x}
 
As per derivation in (\ref{equ_update_x}), $\bx$ is updated as a minimiser of an SCQP. The algorithm iterates to update $\bx$ over $\bz$ and $\by$. As in Algorithm~\ref{alg_qp_sphere}, at each iteration to update $\bx$, one needs to compute the eigenvalue decomposition of the matrix $\bQ + \gamma \bI - 2\gamma \bT_x$, which is changed by a term $\bT_x$. In order to accelerate the update rule, we perform the following linearisation which can bypass the matrix $\bT_x$  in the quadratic term.

From (\ref{equ_update_x_1}), $\bx$ is updated as a minimiser to an SCQP
 \be
 \bx &=& \argmin   \quad  f(\bx)  + \frac{\gamma}{2} \|\bx \otimes \bx - \bz   +   \frac{\by}{\gamma}\|^2  		\,\notag \\
 &=& \argmin   \quad  f(\bx)  + \frac{\gamma}{2} \|\bx \bx^T -  \bT_x \|_F^2  		\,\notag .
 \ee
 Now, we replace the second term in the above problem by its linearization at the previous update denoted by $\bx_o$
 \begin{align}
\bx &= \argmin   \;\;  f(\bx)  + \frac{\gamma}{2} \left( c(\bx_{o}) + \left(\frac{\partial c(\bx)}{\partial \bx}(\bx_o)\right)^T \,  (\bx - \bx_o)  \right.
\notag \\  & \qquad\qquad\qquad\qquad\qquad \left.+ {\mu}\|\bx - \bx_o\| _F^2  \right) 		\,\notag \\
&= \argmin   \;\;  f(\bx)  + \frac{\gamma}{2}  \left(4 (\bx_o \bx_o^T - \bT_x)\bx_o \right)^T \,  \bx + \frac{\gamma\mu}{2} \|\bx - \bx_o\| _F^2  		\,\notag \\
&= \argmin   \;\;  f(\bx)  + 2\gamma  \, \bx_o^T (\bx_o \bx_o^T - \bT_x)\,  \bx + \frac{\gamma\mu}{2} \|\bx - \bx_o\| _F^2  		\,\notag \\
%
 %
 &= \argmin   \;  \frac{1}{2}  \, \bx^T \left( \bQ   +   \gamma\mu  \bI \right) \, \bx + ( \bb + \gamma(2-\mu) \bx_o  - 2\gamma \bT_x \bx_o)^T  \bx   \,  \label{equ_update_x_linearize} \\
 & \quad \text{s.t.} \quad \bx^T \bx = 1 \notag 
 \end{align}
where $c(\bx) = \|\bx \bx^T - \bT_x\|_F^2$ and $\mu > 0$.

The optimisation in (\ref{equ_update_x_linearize}) shows that the quadratic term adjusts $\bQ$ by a term $\gamma \mu \bI$, i.e., shifting its eigenvalues by $\gamma \mu$. So we need not decompose the matrix $\bQ$ again. Different from the update in (\ref{equ_update_x}), the linear term changes by the previous estimate of $\bx$.

For this new update (\ref{equ_update_x_linearize}), the algorithm is computationally cheaper, and still preserves the convergence as that in (\ref{equ_update_x}).

\subsection{Generating a positive definite matrix $\bH_1$}\label{sec:gen_psdmatrix}

In the conversion of the QCQP problem in (\ref{equ_qp_qc}) to the problem (\ref{equ_qp_sphere_proj}), the constraints over multiple ellipsoids are interpreted as a constraint over a sphere and orthogonality constraints.
Choosing a psd matrix $\bH_k$ from the set of matrices $\bH_n$ plays an important role and affects the entire estimating process.
Here, a simple condition is that the selected matrix $\bH_k$ should have a low condition number.
In some cases, it is better to generate a new psd matrix $\bH_{\alpha}$ rather than choosing one among $\bH_n$. 
The new matrix $\bH_{\alpha}$ should have a condition number as small as possible by solving an eigenvalue problem (EVP)  \cite{BEFB:94}
\be	
&\min_{\alpha,\gamma}  \quad  & \gamma  \, , \quad
\text{s.t.}  \quad 		\bI    <  {\bH}_{\alpha} < \gamma   \bI \notag
\ee 
 where 
$	{\bH}_{\alpha} = \sum_{n = 1}^{N}   \bH_n \alpha_n$.

The problem can also be formulated as a semidefinite programming (SDP) problem, which can be solved using the SEDUMI or TFOCS toolboxes.  
The generated matrix $\bH_{\alpha}$ is then scaled by a factor of $\displaystyle \left(\sum_{n = 1}^{N} \alpha_n \right)$, so that it satisfies the quadratic constraint $\bx^T \bH_{\alpha} \bx = 1$.

Alternatively, the matrix $\bH_{\alpha}$ can be generated so that its Frobenius norm is minimum, i.e., 
\be
	&\min 	 \quad  & \|{\bH}_{\alpha}\|_F^2 	\notag \\
	&\text{subject to} 	\quad & {\bH}_{\alpha} = \sum_{n = 1}^{N}   \bH_n \alpha_n  , \quad 
 \sum_{n = 1}^{N} \alpha_n = 1, \quad \alpha_n \ge 0.  \notag
\ee 
For the latter problem, we define a matrix $\bH = $ $[\vtr{\bH_1}, \ldots, \vtr{\bH_N}]$, and find a vector $\balpha = [\alpha_1, \ldots, \alpha_N]$ in a quadratic programming with a linear constraint
\be
	&\min 	 \quad  & \balpha^T  (\bH^T \bH) \, \balpha , \quad 
	\text{s.t.}   \;\;  \1^T\, \balpha = 1, \quad \alpha_n \ge 0. \notag
\ee


\begin{example}(\bf Effects of the condition number of the psd matrix involving in the quadratic constraint)\label{ex_qpqc}
\end{example}

In this example, we consider a QP problem of a psd matrix $\bQ$ of size $10 \times 10$ over quadratic constraints for three psd matrices $\bH_1$,  $\bH_2$ and  $\bH_3$. The linear term in the QP problem is with a zero vector $\bb$.
The matrices are randomly generated, and the condition numbers of $\bH_n$ are 185.7, 12403 and 1000.1, respectively. 
Since the second matrix $\bH_2$ has a high condition number, we generate a new psd matrix $\tilde{\bH}_2$ from $\bH_1$, $\bH_2$ and $\bH_3$ 
as described in Section~\ref{sec:gen_psdmatrix}. The new matrix has a low condition number of 8.9, and is used in place of the matrix $\bH_2$.

In the first analysis, we compare the performances and convergence behaviour of the QCQP algorithm (Algorithm~\ref{alg_qpqc}) when each matrix, either $\bH_i$ or $\tilde{\bH}_2$,
is selected to represent the quadratic constraint, i.e., $\bx^T \bH_i \bx = 1$, and the remaining two matrices represent the orthogonality constraints 
of the parameter vector $\bz$. There are four possible selections of the matrix. The parameter $\gamma$ is fixed to 0.1 in the test. 
Fig.~\ref{fig_qpqc} shows the objective values to illustrate the convergence and final performance, 
and the $\ell_2$-norm of the orthogonality constraints $\|\bD^T (\bx \otimes \bx) \|_2^2$ to verify if all the quadratic constraints $\bx^T \bH_i \bx = 1$ are achieved.
The results indicate that when the matrix with a high condition number, $\bH_2$ or $\bH_3$, plays as a quadratic constraint, the algorithm converges to a false local minima, which do not satisfy the orthogonality constraints $\bD^T (\bx \otimes \bx) = \0$. 

When running the optimisation with a quadratic constraint over the matrix $\bH_1$ or $\tilde{\bH}_2$, the algorithm converges to the same value of $4.8659 \times 10^{-4}$
with a norm  $\|\bD^T (\bx\otimes \bx )\|_2^2$ at level of  $10^{-7}$ and $10^{-10}$, respectively. An important result is that the algorithm needs only 526 iterations 
to achieve such high accuracy with the matrix $\tilde{\bH}_2$, while it needs at least 10000 iterations for the problem with a quadratic constraint over the matrix $\bH_1$.

\begin{figure}[t]
\centering
\includegraphics[width=.41\textwidth, trim = 0.0cm .0cm 0cm 0cm,clip=true]{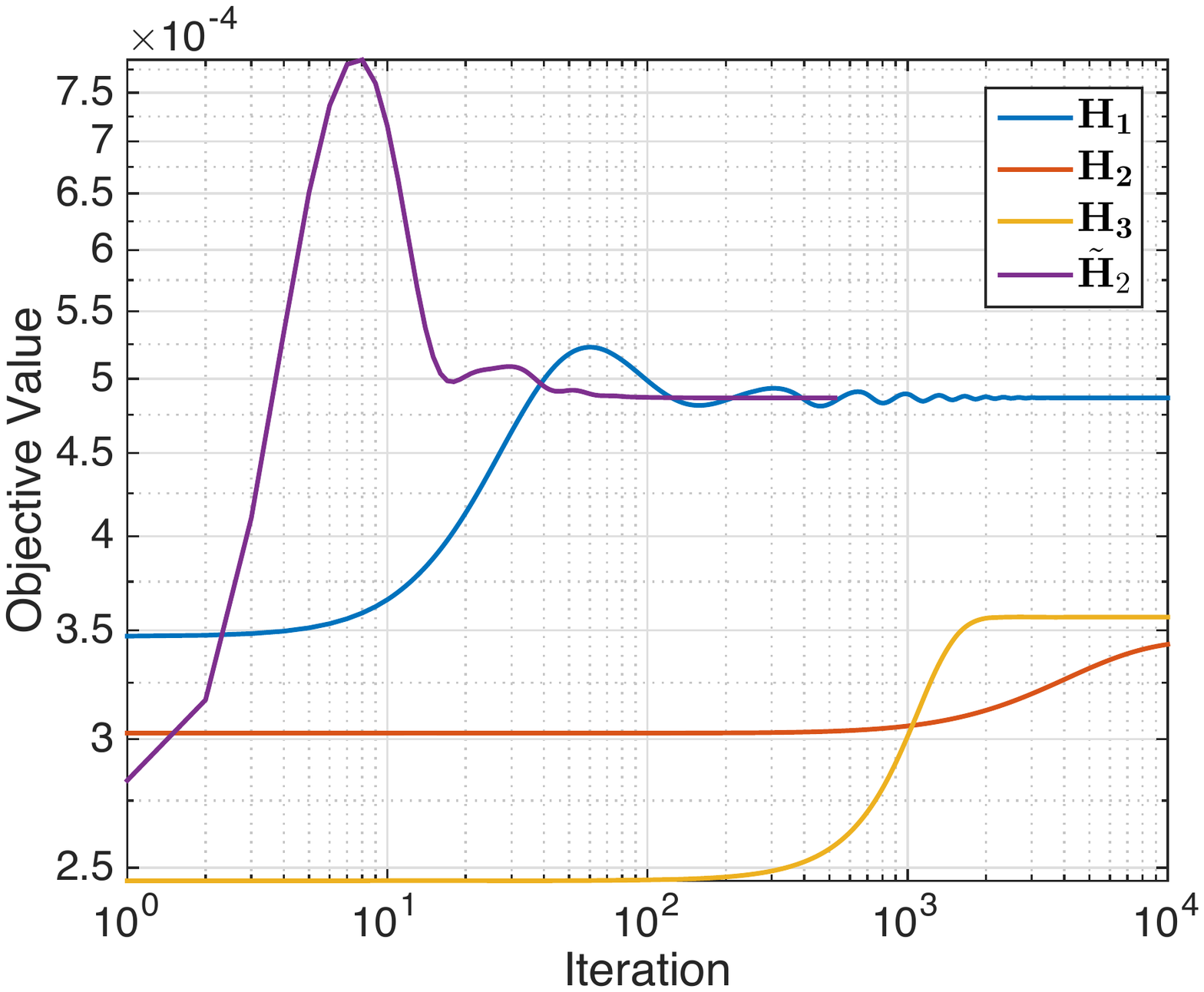}\\
\includegraphics[width=.43\textwidth, trim = 0.0cm .0cm 0cm 0cm,clip=true]{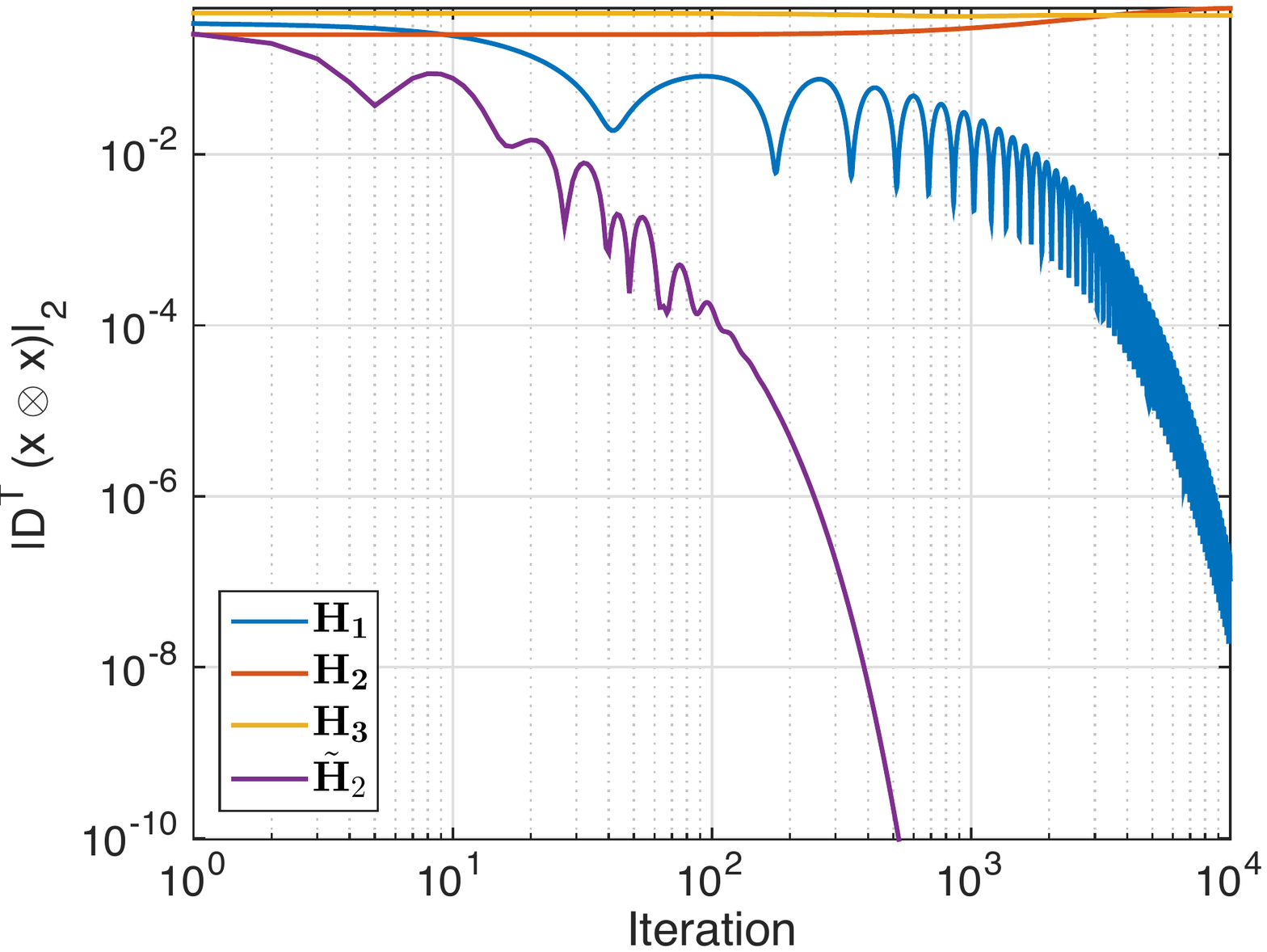}
\caption{Illustration of performances of the proposed algorithm in Example~\ref{ex_qpqc}. 
The algorithm demands a huge number of iterations, but converges to local minima, when the quadratic constraint $\bx^T \bH \bx = 1$
is constrained over the matrix $\bH_2$ or $\bH_3$. The algorithm quickly converges when the quadratic constraint is accompanied by the matrix $\tilde{\bH}_2$.
}
\label{fig_qpqc}
\end{figure}

\begin{figure}[t]
\centering
\includegraphics[width=.41\textwidth, trim = 0.0cm .0cm 0cm 0cm,clip=true]{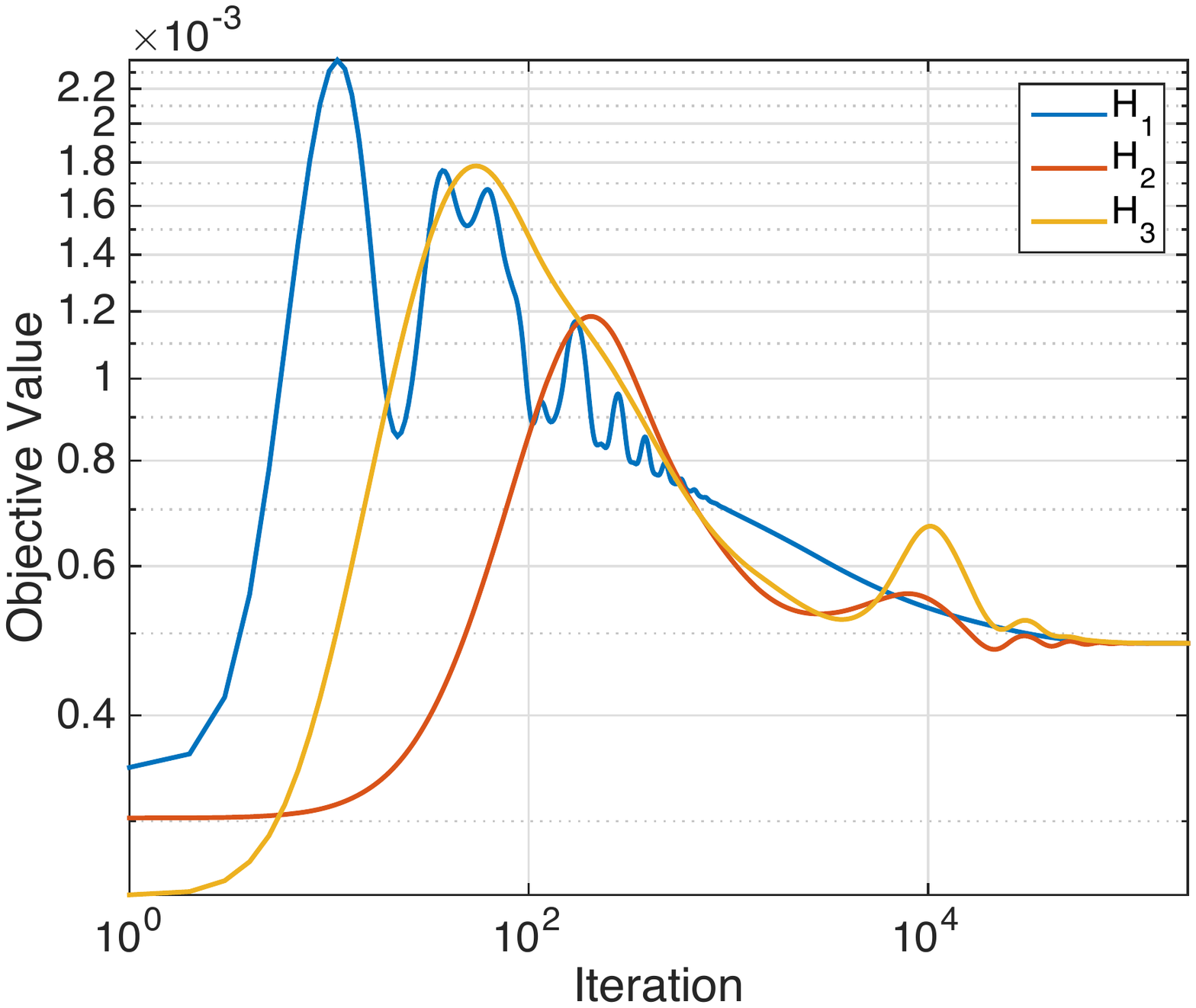}
\\
\includegraphics[width=.43\textwidth, trim = 0.0cm .0cm 0cm 0cm,clip=true]{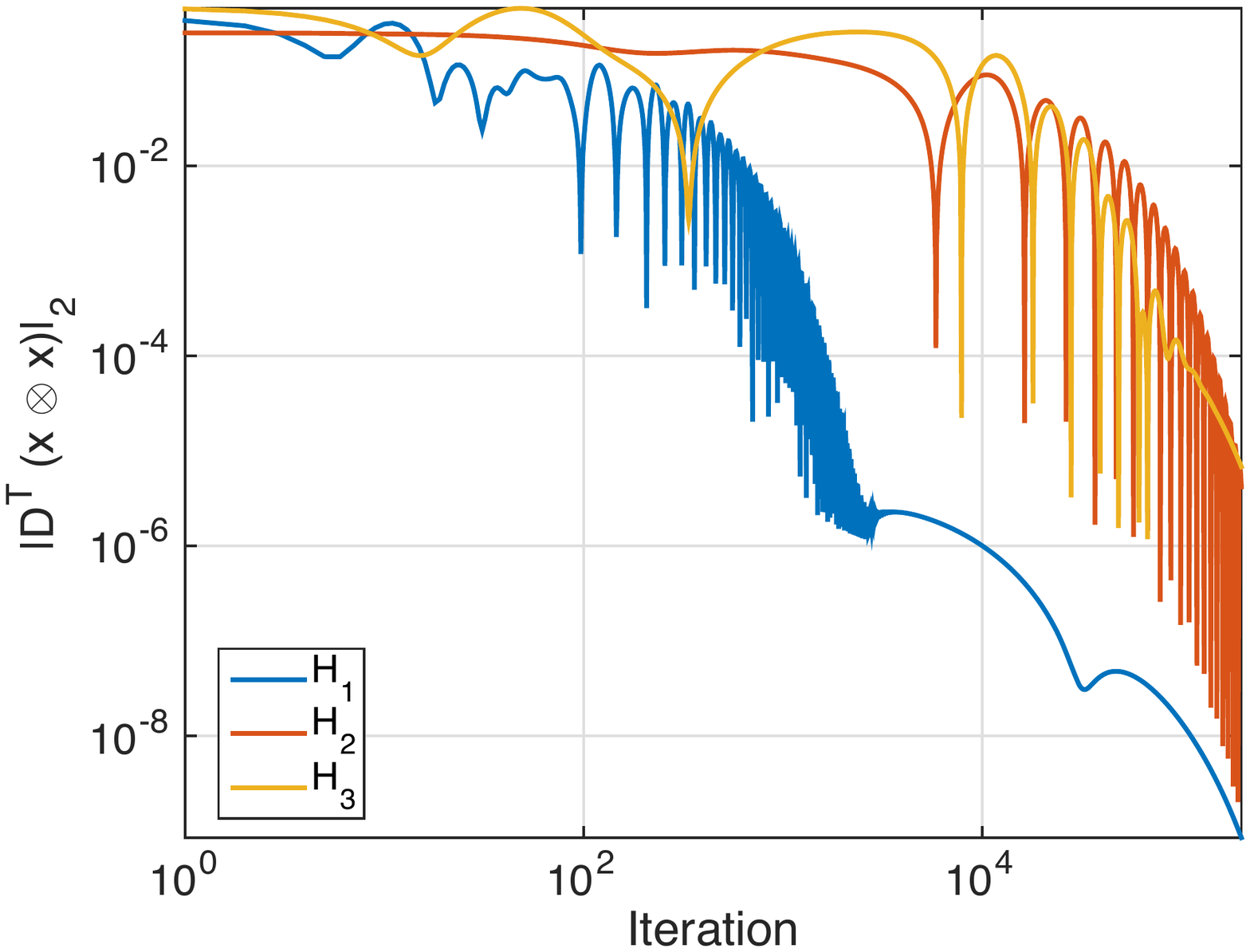}
\caption{Illustration of performances of the proposed algorithm in Example~\ref{ex_qpqc}. 
The algorithm converges slowly after 200000 iterations when running with a high step size $\gamma = 100$. 
}
\label{fig_qpqc_highgamma}
\end{figure}

The results imply that when the matrix involved in the quadratic constraint has a large condition number, 
the optimisation becomes hard, and the algorithm demands a large number of iterations.
It even can converge to a local minimum if the step size $\gamma$ is not chosen properly. 
This is clear because the QCQP conversion requires the Cholesky decomposition of an ill-conditioned matrix.
For this case, generating a new psd matrix with a lower condition number is suggested to replace the one of ill condition.
An alternative method is to run the algorithm with a relatively higher step size.
For example, when running the algorithm with $\gamma = 100$,
the algorithm converges to the (global) minimum, but it needs 200.000 iterations as illustrated in Fig.~\ref{fig_qpqc_highgamma}.

We note that the optimization problem in (\ref{equ_qp_sphere_proj}) can be solved using the interior-point algorithm. We verify this method in three optimisation problems, each corresponding to a matrix $\bH_i$. The results show that the method converges twice to a false local minimum with an objective value of  0.0017.

\begin{example}(\bf Effect of the step size $\gamma$)\label{ex_qpqc_2}
\end{example}
As shown in the previous example, a large step size $\gamma$ can be useful when the QCQP problem is hard.
In this example, we use the same matrices as in Example~\ref{ex_qpqc}, and compare convergence behaviour of the proposed algorithm when $\gamma$ is varied in the range of $[0.001, 1000 ]$.
We plot the objective function values, i.e.,  $\bx^T \bQ \bx$ to illustrate the convergence of the proposed algorithm, and the $\ell_2$-norms of the orthogonality constraints on $\bx \otimes \bx$, i.e., $\|\bD^T (\bx \otimes \bx)\|_2$ in Fig.~\ref{fig_qpqc_obj_vs_gamma}.
A relatively large step size, e.g.,  $\gamma = 10, 100, 1000$, enforces the orthogonality constraints on the parameter vector quickly, 
but it reduces the objective value slowly; hence, it slows down the overall convergence.
However, a very small step size, e.g., $\gamma  = 0.01$, may make the algorithm diverge, while the orthogonality constraints cannot be enforced on $\bx \otimes \bx$. 
Selection of an appropriate step size affects the overall convergence. Fortunately, we can choose the step sizes in quite a wide range. In this example, $\gamma = 0.01$ is possibly the best selection, but $\gamma = 1$ and 10 are also good choices, although the algorithm may need a more iterations.

\begin{figure}[t]
\centering
\includegraphics[width=.41\textwidth, trim = 0.0cm .0cm 0cm 0cm,clip=true]{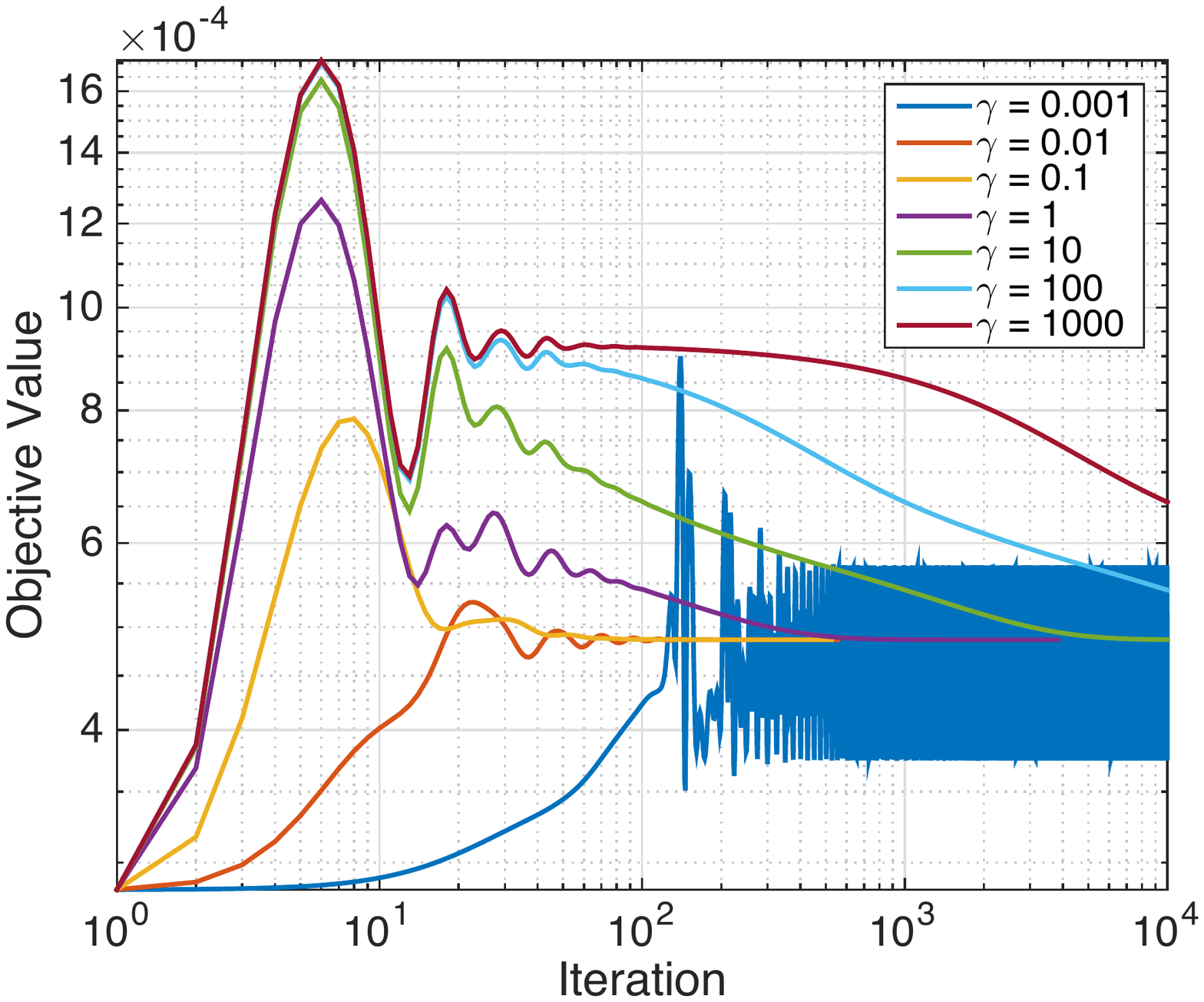}
\\
\includegraphics[width=.43\textwidth, trim = 0.0cm .0cm 0cm 0cm,clip=true]{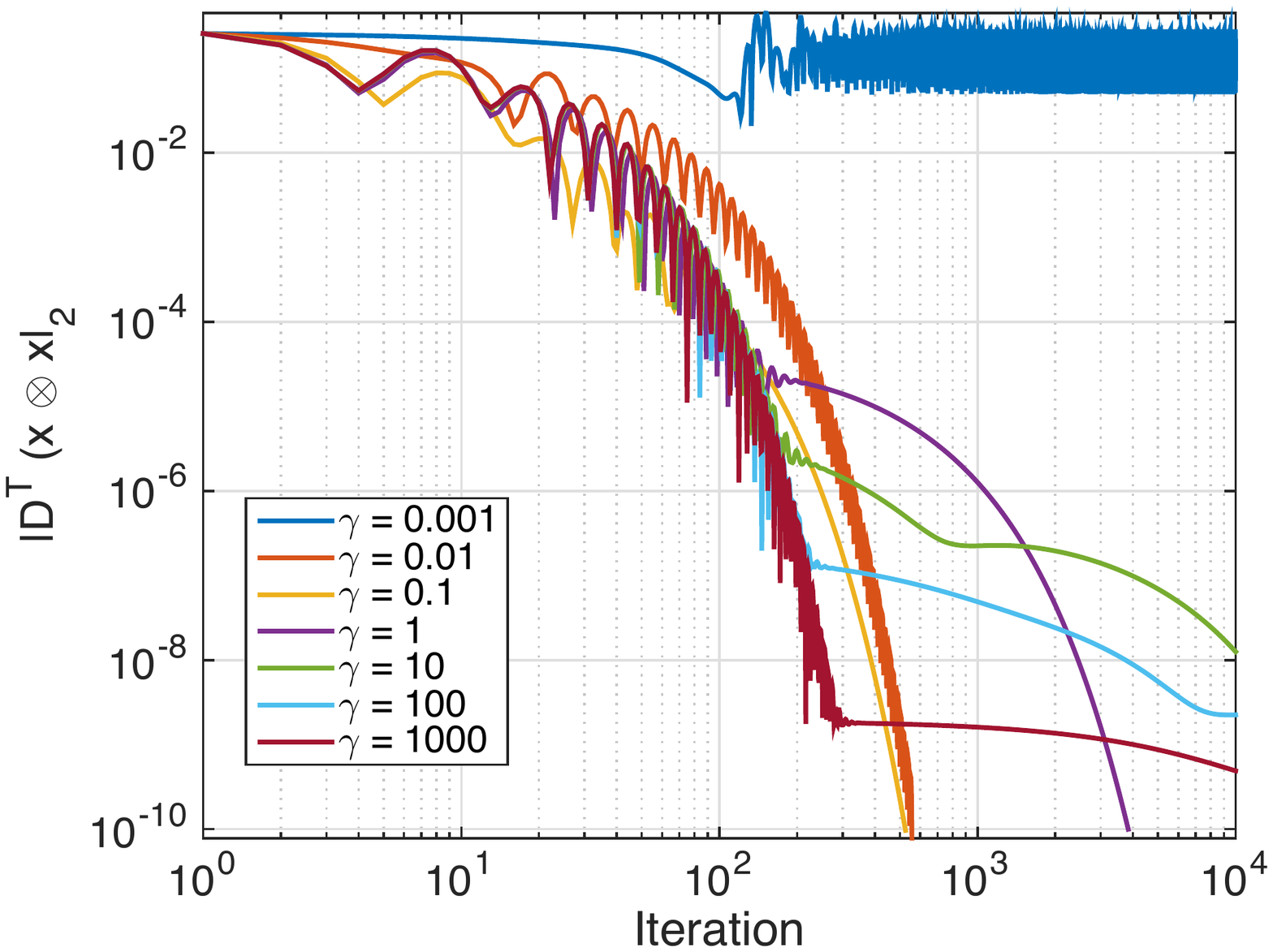}
\caption{Performance of the algorithm in Example~\ref{ex_qpqc} over a range of $\gamma$. 
A relatively large $\gamma$ enforces the orthogonality constraints quickly, 
but varies the objective value slowly; hence, it slows down the overall convergence.
A very small $\gamma$ makes the algorithm diverge. 
}
\label{fig_qpqc_obj_vs_gamma}
\end{figure}

\begin{example}(\bf Performance of the linearisation method)\label{ex_qpqc_3}
\end{example}
In this example, we verify performance of Algorithm~\ref{alg_qpqc}, but $\bx$ is updated using the linearization method in Section~\ref{sec::linearisation_x}. 
The parameters are initialised as in the previous examples.
The step-size $\mu$ is set to the step size $\gamma$, and is varied in the same range as in Example~\ref{ex_qpqc_2}.
The objective values and norm of the constraints are plotted in Fig.~\ref{fig_qpqc_obj_vs_gamma_linearsize}. Compared to the results shown in Fig.~\ref{fig_qpqc_obj_vs_gamma}, there is not much difference in the convergence of the proposed algorithm using the two update rules for $\bx$.

\begin{figure}[t]
\centering
\includegraphics[width=.41\textwidth, trim = 0.0cm .0cm 0cm 0cm,clip=true]{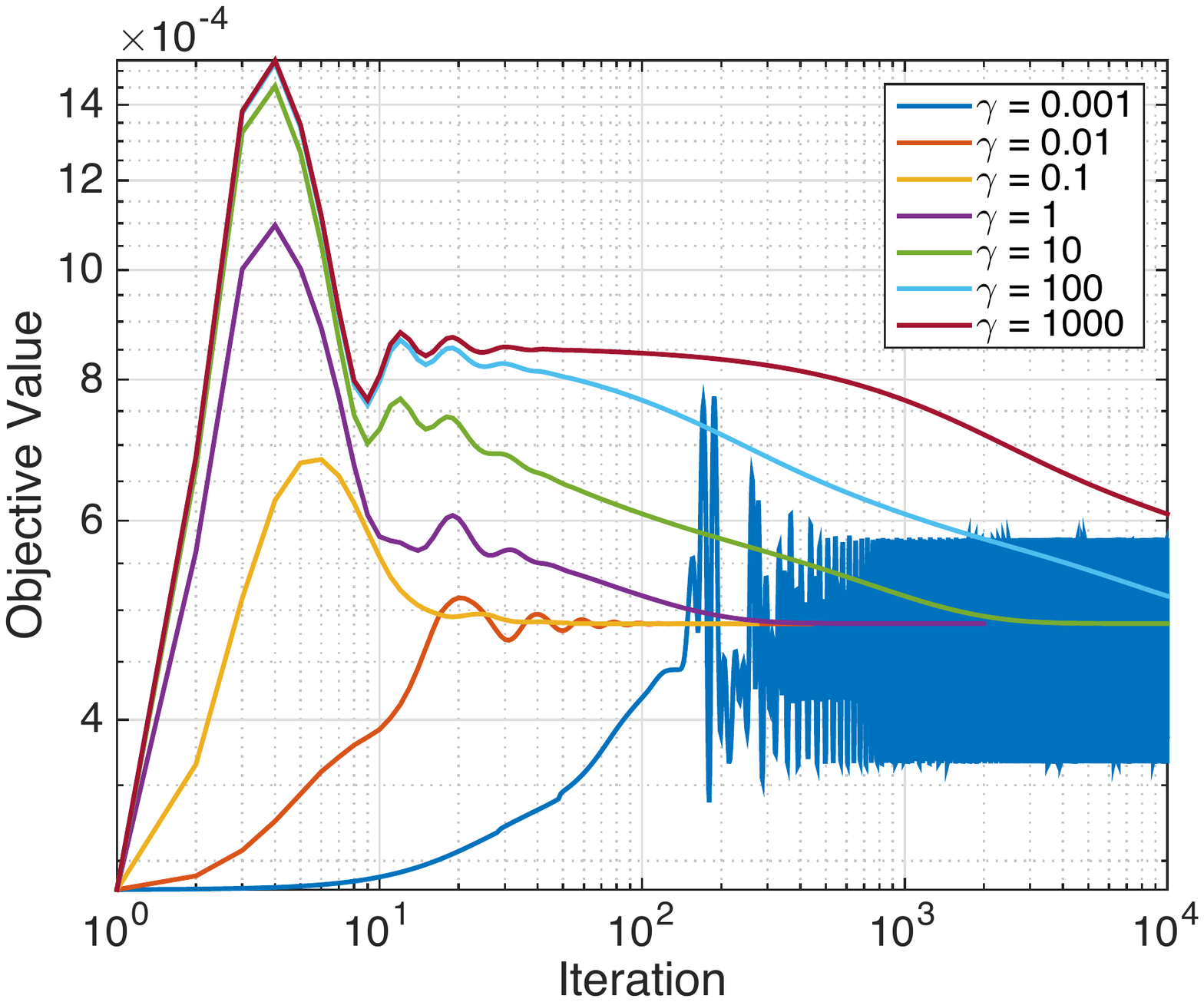}
\\
\includegraphics[width=.43\textwidth, trim = 0.0cm .0cm .8cm 0cm,clip=true]{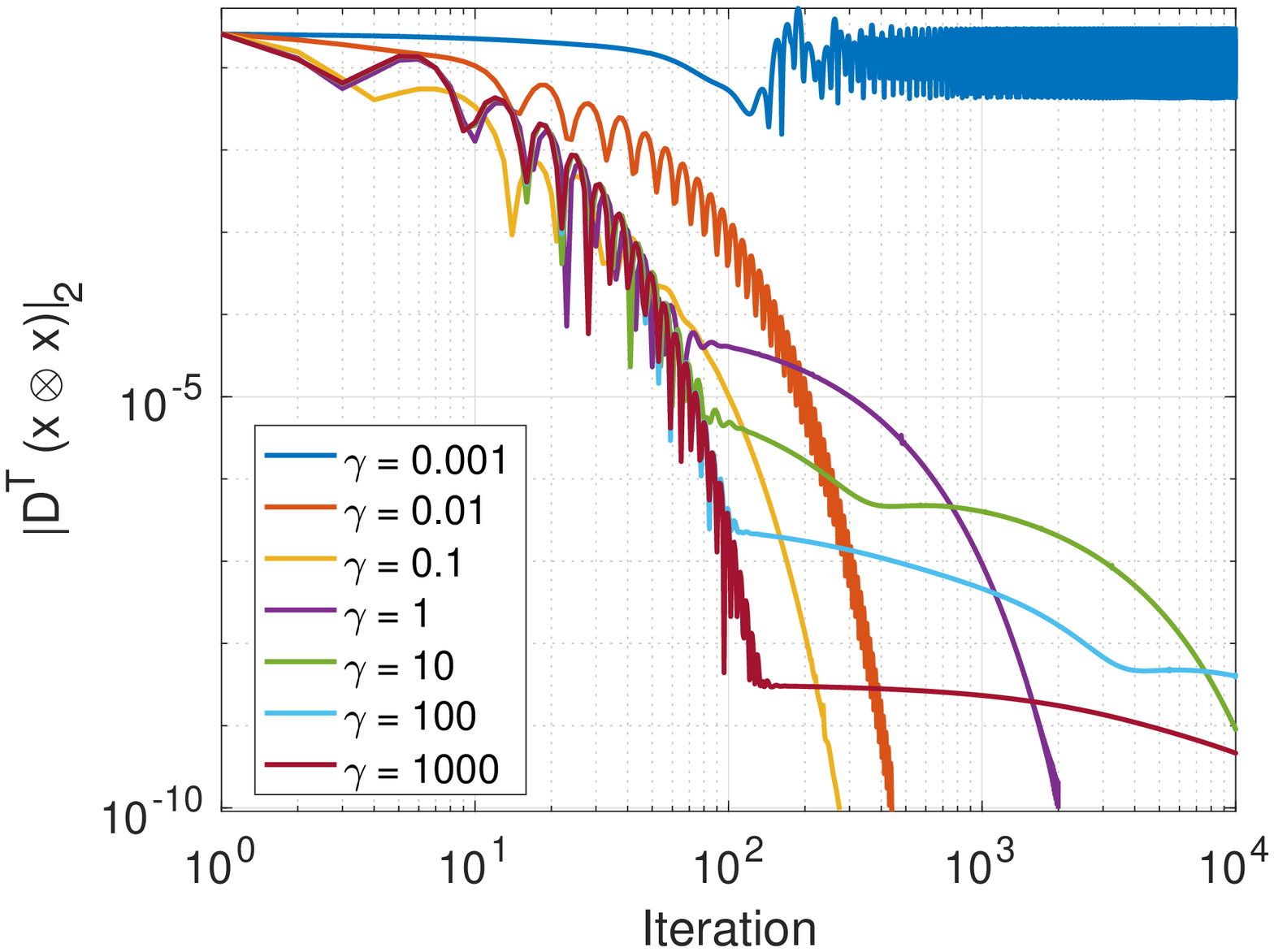}
\caption{Performance of the proposed algorithm using the linearisation update rule for $\bx$ in Example~\ref{ex_qpqc} for a range of values of $\gamma$. 
A relatively large value of $\gamma$ enforces the orthogonality constraints quickly, 
but varies the objective value slowly; hence, it slows down the overall convergence.
A very small $\gamma$ makes the algorithm diverge. 
}
\label{fig_qpqc_obj_vs_gamma_linearsize}
\end{figure}

\begin{example}\label{ex_qpqc_4mc}
\end{example}
In this example, we present results from 100 simulations with a similar settings to those in Example~\ref{ex_qpqc}.
In each run, the matrices are randomly generated. The step size $\gamma$ is set to $\alpha \, \kappa_{H}$, where 
$\alpha$ is in a range of $10^{-4}$ to $10^4$, and $\kappa_{H}$ denotes the smallest condition number of the matrices $\bH_m$.
We verify performance of the constrained QP problems in which the matrix $\bH$ is generated to have a minimum condition number. 
In addition, we compare the performance of the proposed algorithm with those using the interior-point algorithm for constrained nonlinear minimization.
In order to assess the performance, we compute the relative objective errors, i.e., a relative error between the objective value and the best (smallest) objective value among all objective values obtained by the considered methods in each run, error of the constraints, and the number of iterations.
Fig.~\ref{fig_qpqc_mc}  shows empirical cumulative distribution functions of the measures for two cases, with and without minimisation of the condition number. 
A setting of $\gamma$ is considered good, if the algorithm achieves a small relative error, e.g., less than $10^{-3}$, and a small constraint error, e.g., $\le 10^{-8}$. 

\begin{figure*}[t]
\centering
\subfigure[Empirical CDP of errors and number of iterations in solving QCQP with matrices $\bH_m$.]{
\includegraphics[width=.32\textwidth, trim = 0.0cm .0cm 0cm 0cm,clip=true]{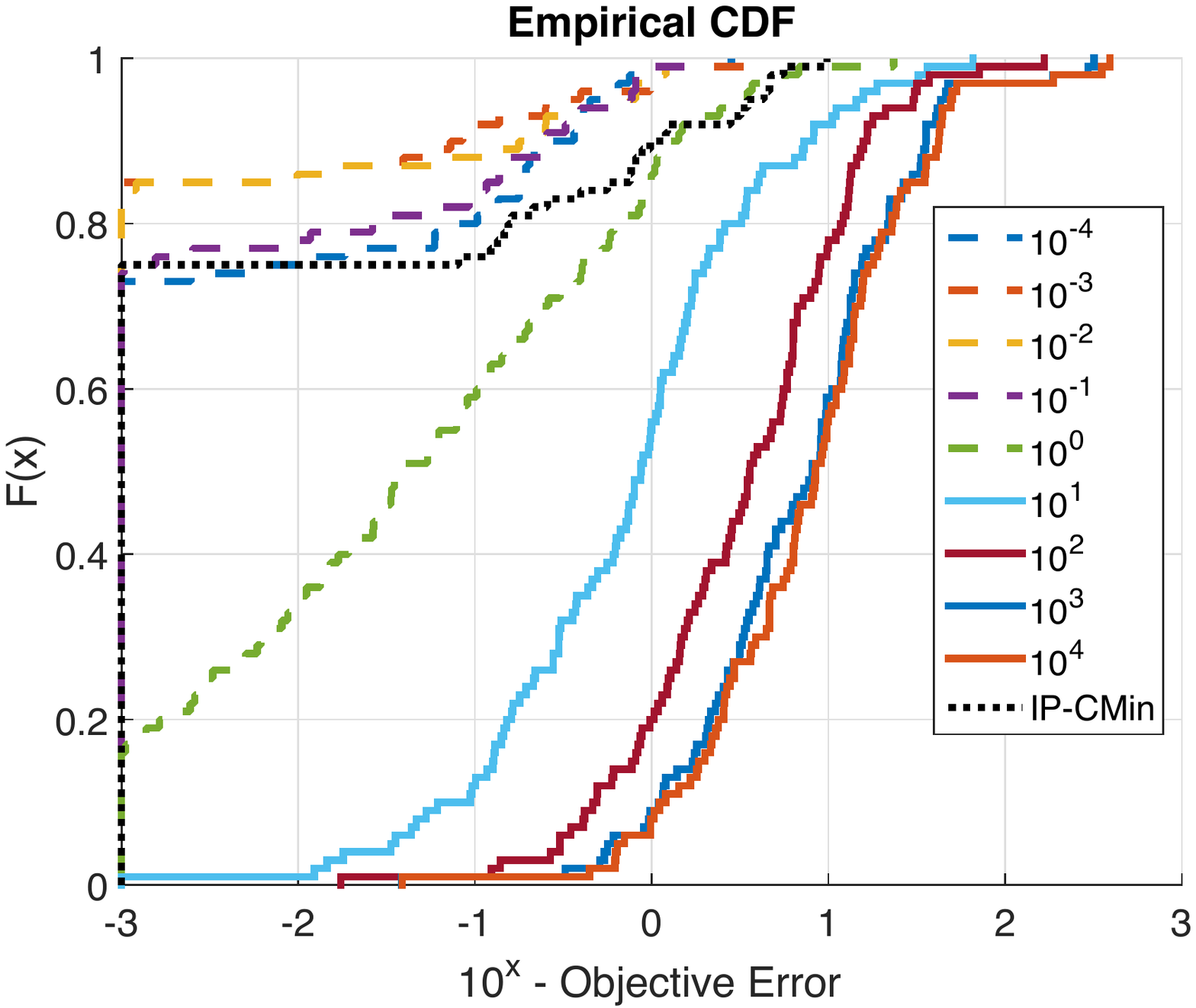}
\hfill
\includegraphics[width=.32\textwidth, trim = 0.0cm .0cm 0cm 0cm,clip=true]{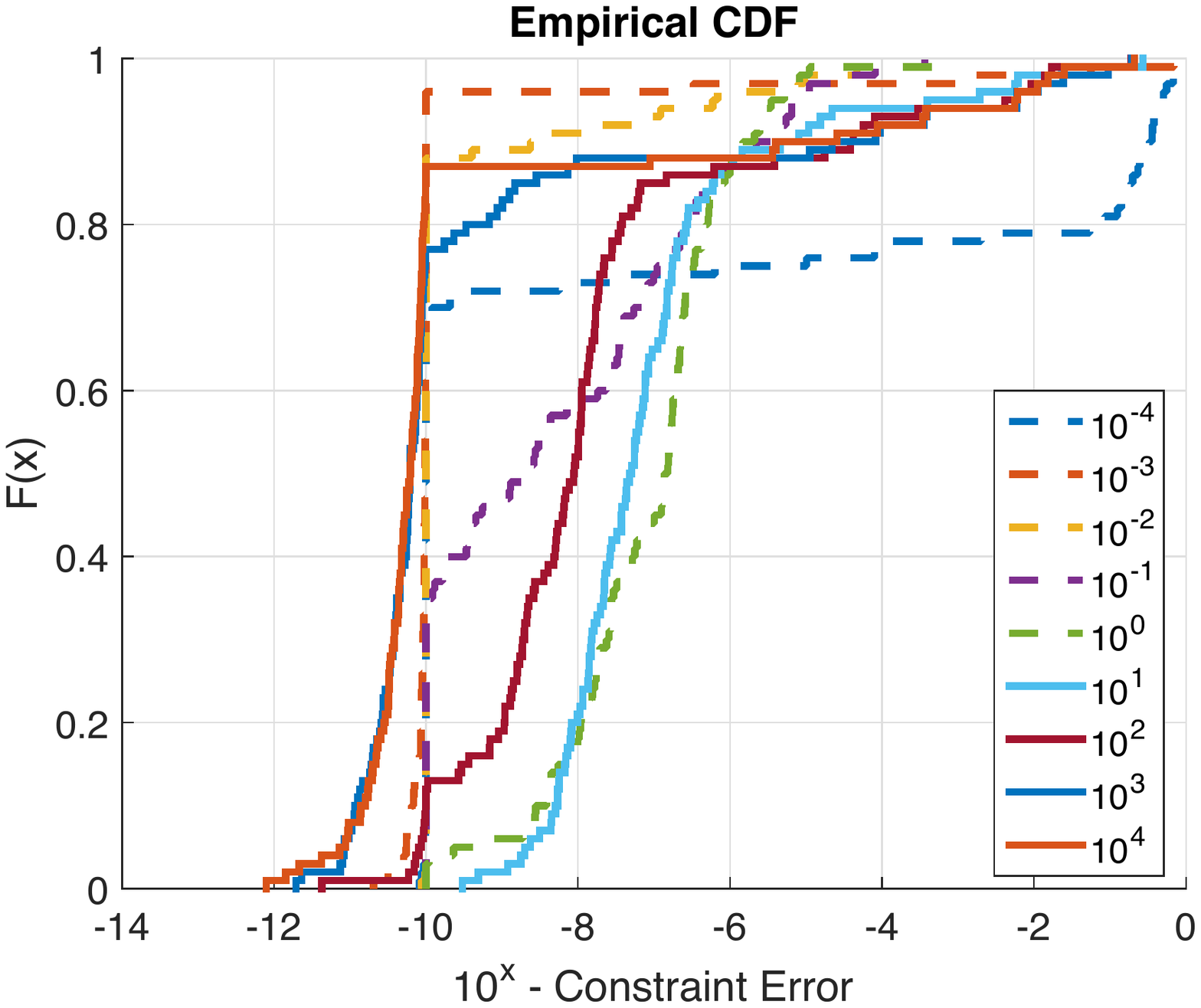}
\hfill
\includegraphics[width=.32\textwidth, trim = 0.0cm .0cm 0cm 0cm,clip=true]{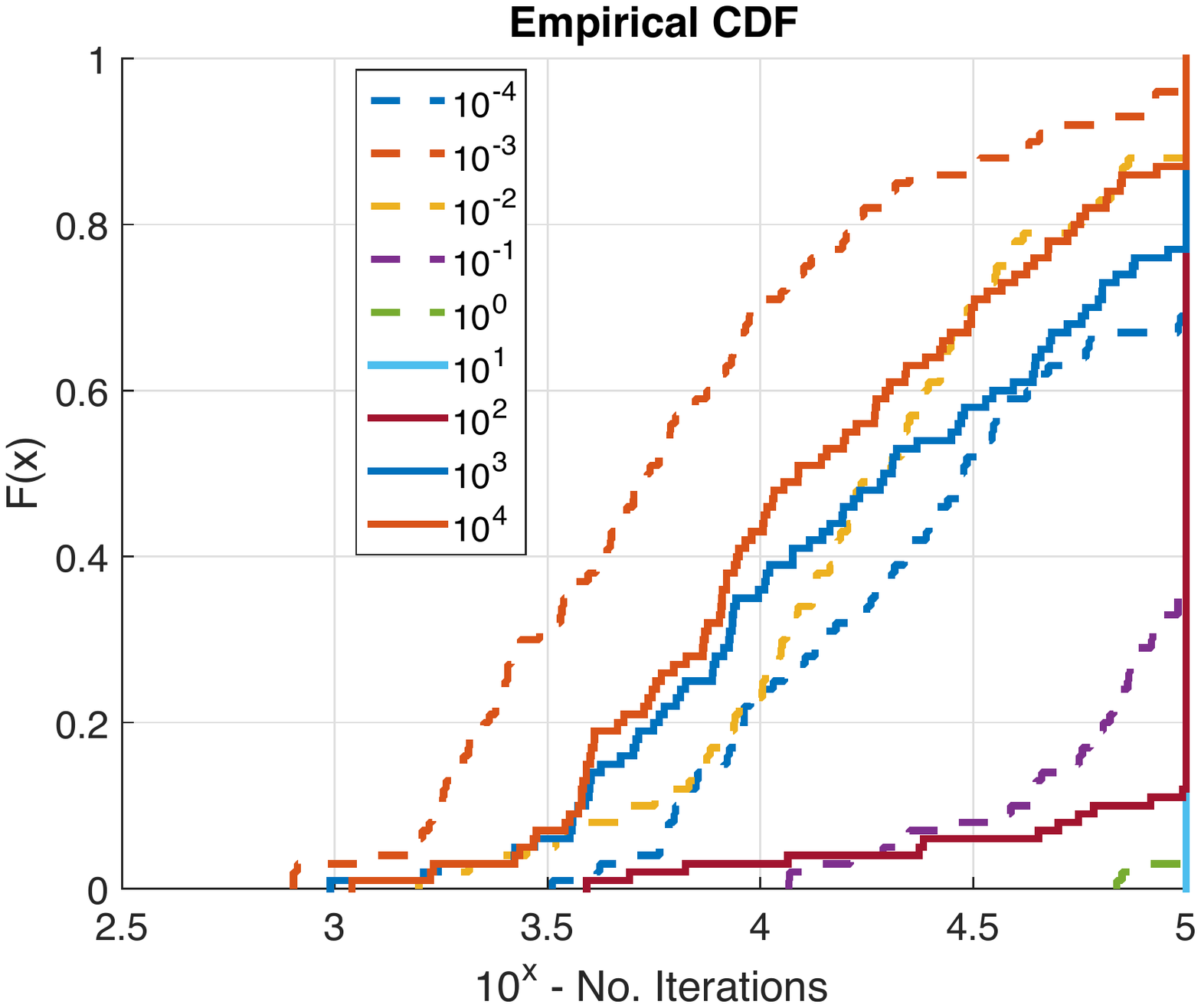}
}
\subfigure[Empirical CDP of errors and number of iterations in solving QCQP with a generated matrix $\tilde{\bH}$ having low condition number.]{
\includegraphics[width=.32\textwidth, trim = 0.0cm .0cm 0cm 0cm,clip=true]{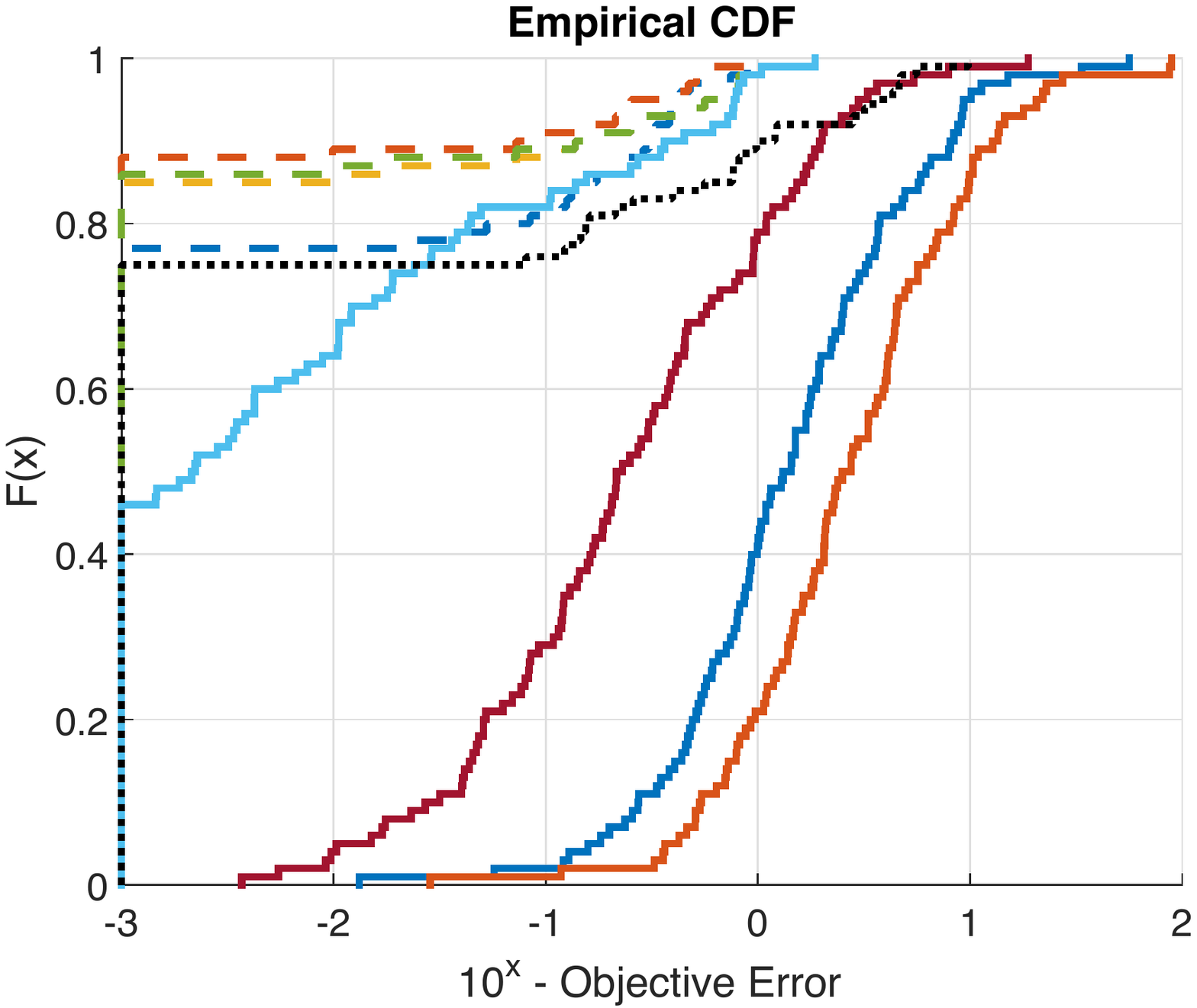}
\hfill
\includegraphics[width=.32\textwidth, trim = 0.0cm .0cm 0cm 0cm,clip=true]{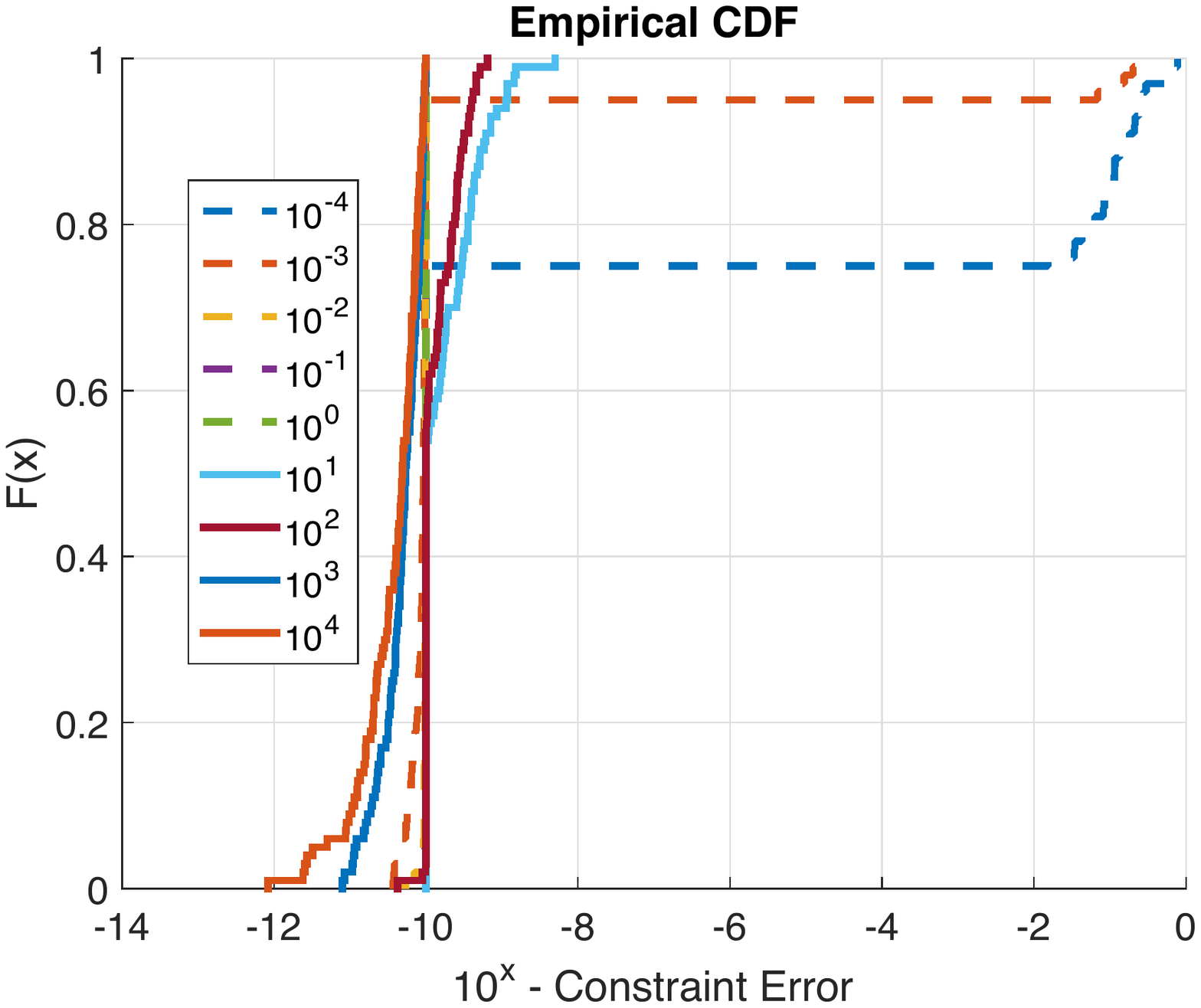}
\hfill
\includegraphics[width=.32\textwidth, trim = 0.0cm .0cm 0cm 0cm,clip=true]{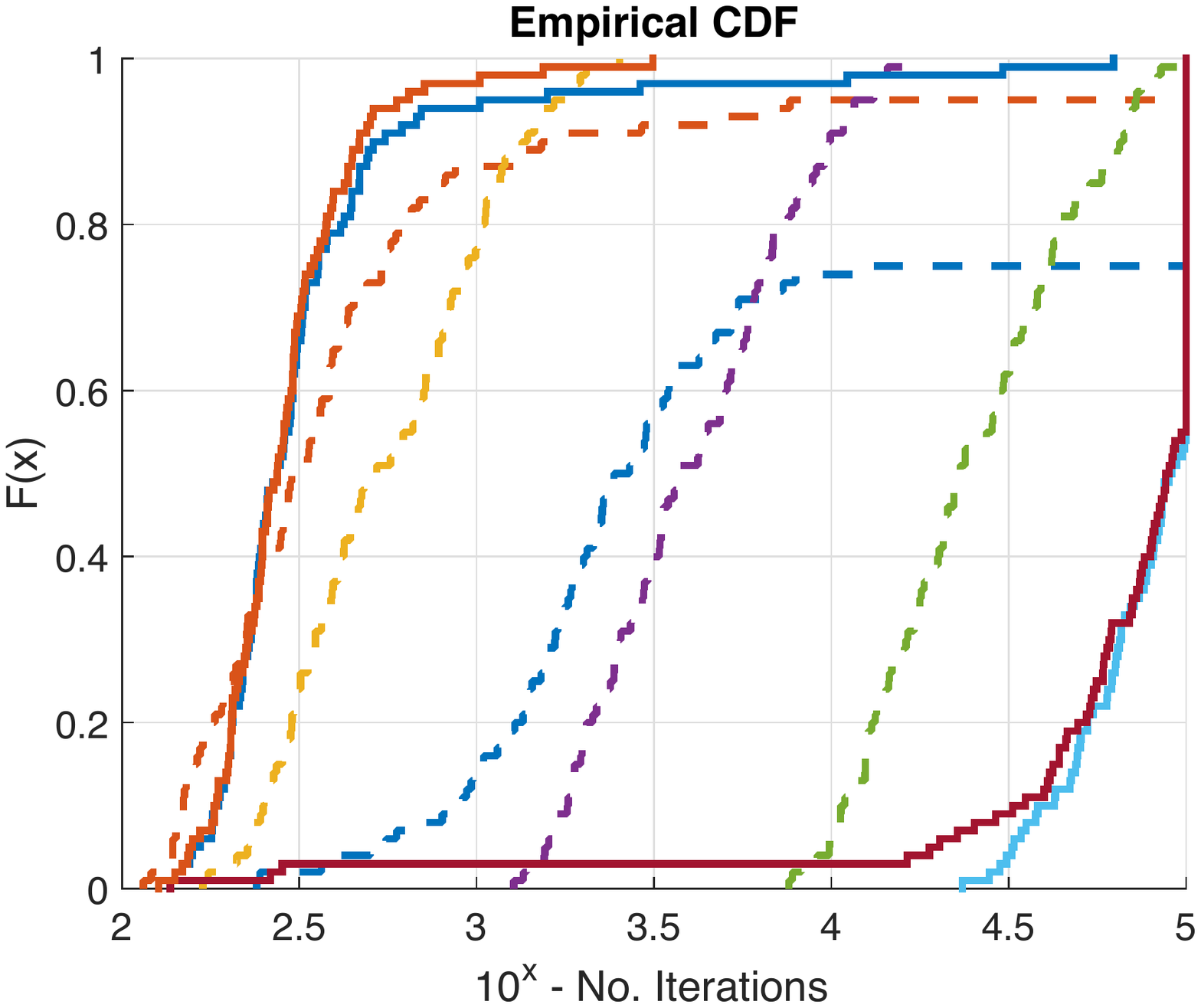}
}
\caption{Performance comparison in solving the QP with three quadratic constraints.
}
\label{fig_qpqc_mc}
\end{figure*}

As shown in Fig.~\ref{fig_qpqc_mc}, when the matrices $\tilde{\bH}$ have minimum condition numbers, the algorithm achieves good results with small relative errors, less than $10^{-4}$, with $\alpha  = \{10^{-4}, 10^{-3}, \ldots, 1\}$. Fig.~\ref{fig_qpqc_mc}(b-center) shows that in some runs the outcome vectors $\bx$ may not satisfy the constraints when $\alpha  = 10^{-4}$ and $10^{-3}$.
With the settings $\alpha = \{0.01, 0.1, 1\}$, the algorithm not only converges to the desired solution but also requires a fewer iterations, especially when $\alpha = 0.01$.
We note that when setting $\alpha$ to high values, e.g., $\ge 10$, 
the small constraint errors indicate that the outcome satisfies the constraints, but the algorithm does not converge to the global minimum within a predefined 100000 iterations, e.g., $\alpha = 10$ and $100$, or it stops because the objective function does not appear to improve significantly, e.g., for $\alpha \ge 1000$.

For the case without the correction of the condition number, although the algorithm converges with $\alpha = 10^{-3}, 10^{-2}$, it often demands a huge number of iterations, as illustrated in Fig.~\ref{fig_qpqc_mc}(a-right).

Compared to the performance of the interior point algorithm (IP), the results indicate that the IP algorithm attains a convergence ratio of 75\% to converge to the best solutions. The augmented Lagrangian algorithm with appropriate step-sizes, i.e., when $\alpha = \{0.001, 0.01, 0.1\}$, attains a convergence ratio of 89\%.

%

\section{Best Rank-1 Tensor Approximation to Symmetric Tensor of  Order-4}\label{sec:bestrank1order4}

We now present a novel application of the quadratic minimisation over a sphere to finding a best rank-1 tensor approximation of an order-4 symmetric tensor. 
The concept of the symmetric tensor is extended from the symmetric matrix, i.e., invariant under any permutation of its indices.
Symmetric tensors can be cumulant tensors, or derivative tensors  of the second Generalised Characteristic Functions\cite{Cardoso91,Yuen,DBLP:journals/corr/abs-1212-6663,DBLP:journals/tsp/AlmeidaLSC12}, or tensors representing similarity or interaction between groups of identities used for clustering \cite{ShashuaZass,Muti05}.

We consider an order-4 tensor $\tY$ which is symmetric, i.e.,
$y(i_1,i_2,i_3,i_4) = y(j_1,j_2,j_3,j_4) $, 
where $[j_1,j_2,j_3,j_4]$ is any permutation of indices $[i_1,i_2,i_3,i_4]$.
The best rank-1 tensor approximation to the tensor $\tY$ is to minimize  the following approximation error 
\be
\min_{\lambda, \bx} \qquad \|\tY - \lambda \, \bx \circ \bx  \circ \bx \circ \bx \|_F^2 \, \label{equ_cost_rank1_sym4}
\ee
where $\lambda \, \bx \circ \bx  \circ \bx \circ \bx$  represents the best rank-1 tensor to approximate $\tY$, and $\bx$ is a unit-length vector, $\bx^T \bx = 1$. For shorthand notation, we denote $\bx \circ \bx  \circ \bx \circ \bx = \bx^{(4)}$.
By expanding the Frobenious norm (\ref{equ_cost_rank1_sym4}) as
\be
\|\tY - \lambda \, \bx^{(4)} \|_F^2 = \|\tY\|_F^2 + \lambda^2 - 2 \, \lambda \, \langle \tY,     \bx^{(4)} \rangle		\notag 
\ee
it is straightforward to see that the optimal weight $\lambda^{\star}$ is the inner product between the tensor  $\tY$ and the rank-1 tensor  $  \bx^{(4)}  = \bx \circ \bx  \circ \bx \circ \bx$, that is, 
$	\lambda^{\star} = \langle \tY,     \bx^{(4)} \rangle	$.
Hence, the objective function is rewritten as
\be
\min_{\bx}  \quad \|\tY\|_F^2 - 2 \left(\tY   \bullet  \bx^{(4)} \right)^2 \notag .
\ee
For a positive $\lambda$, we maximise the inner product to give 
\be	
&\max \qquad 	& \langle \tY,     \bx^{(4)} \rangle	\, \label{equ_cost_rank1_sym4_2a} \quad
\text{s.t.} \quad    \bx^T \bx = 1 \notag 
\ee
and minimise the inner product for a negative $\lambda$, that is
\be	
&\min \qquad 	&\langle \tY,     \bx^{(4)} \rangle	 \, \label{equ_cost_rank1_sym4_2b} \quad
\text{s.t.} \quad   \bx^T \bx = 1 \,. \notag 
\ee
The final solution $\lambda$ is that with the largest absolute value. 
Both problems can be solved on a Riemannian or Stiefel manifold using e.g., the Trust-Region solver  \cite{manopt}.
Here, we propose another method to solve the two above problems.

Let $\bz = \bx \otimes \bx$, then the minimisation problem in (\ref{equ_cost_rank1_sym4_2b}) becomes 
\be
& \min \quad  & \bz^T \bQ \bz \quad \text{s.t.} \quad    \bz = \bx \otimes \bx \, ,  \; \;\text{and}\quad \bz^T \bz = 1
\notag 
\ee  
where $\bQ$ is a mode-(1,2) matricization of $\tY$.

The augmented Lagrangian function of the above problem now becomes
\be
\calL(\bx , \by , \bz) = f(\bz)  + \by^T (\bz - \bx \otimes \bx)  + \frac{\gamma}{2} \, \|\bz - \bx \otimes \bx \|^2
\ee
where $f(\bz)$ is the objective function of the minimization of $\bz^T \bQ \bz$ subject to $\bz^T \, \bz = 1$.
Variables $\bx$, $\bz$ and $\by$ are sequentially updated following the sequence  
\be
\bz &=& \argmin  \quad  f(\bz)  +  \by^T (\bz - \bx \otimes \bx ) + \frac{\gamma}{2} \|\bz - \bx \otimes \bx \|^2  		\, \notag \\
&=& \argmin  \quad  \frac{1}{2} \bz^T \bQ \bz  + ( \by - \gamma \bx \otimes \bx)^T \bz  \label{prob_solve_z4} \\ 
&& \text{subject to } \quad \bz^T \bz = 1 \notag  \\
\bx &=& \argmin   \quad  \by^T (\bz - \bx \otimes \bx) + \frac{\gamma}{2} \| \bz - \bx \otimes \bx  \|^2  		\, \notag \\
&=& \argmin   \quad    \| \bz + \frac{\by}{\gamma} - \bx \otimes \bx  \|^2  		\label{prob_solve_x4}\\
\by &\leftarrow& \by + \gamma (\bz - \bx \otimes \bx ) \, .
\ee
The unit-length vector $\bz$ is a minimiser to a SCQP, whereas $\bx$ is the eigenvector associated with the largest eigenvalue of the symmetric matrix $\bT_s = \frac{1}{2}(\bT  + \bT^T)$, where 
\be
	\bT =  \bZ + \frac{1}{\gamma} \bY\, ,		\notag 
\ee
$\bz = \vtr{\bZ}$ and $\by = \vtr{\bY}$.

\begin{example}(\bf Best rank-1 tensor approximation to a symmetric tensor of order-4)\label{ex_bestrank_sym4}
\end{example}
This example compares performance of our algorithm for best rank-1 tensor approximation for symmetric tensor of order-4, and the Riemannian trust-region solver in the Manopt toolbox \cite{manopt}.
We generate 1000 random tensors of size $I \times I \times I \times I$, where $I = 10$ or $20$, then matricize them so that they will become symmetric tensors of order-4. The tensors are normalized to have unit Frobenius norm.
For each run, the best approximation error $\varepsilon^{\star}$ is defined as the smallest  error among approximation errors of the two methods: Augmented Lagrangian method and the Riemannian trust-region
\be
	\varepsilon = \|\tY - \lambda \, \bx^{(4)}\|_F^2  = 1 - \lambda^2 \,.	\notag
\ee
Relative errors to the best approximation error $\frac{\varepsilon - \varepsilon^{\star} }{\varepsilon^{\star}}$ is then assessed to measure performance of the approximation.

\begin{figure}[t]
\centering
\subfigure[$I = 10$]{
\includegraphics[width=.45\textwidth, trim = 0.0cm 0cm 0cm .7cm,clip=true]{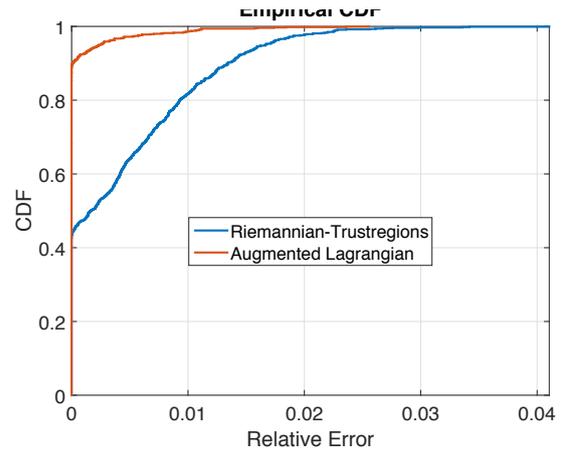}}
\hfill
\subfigure[$I = 20$]{
\includegraphics[width=.45\textwidth, trim = 0.0cm 0cm 0cm 0.7cm,clip=true]{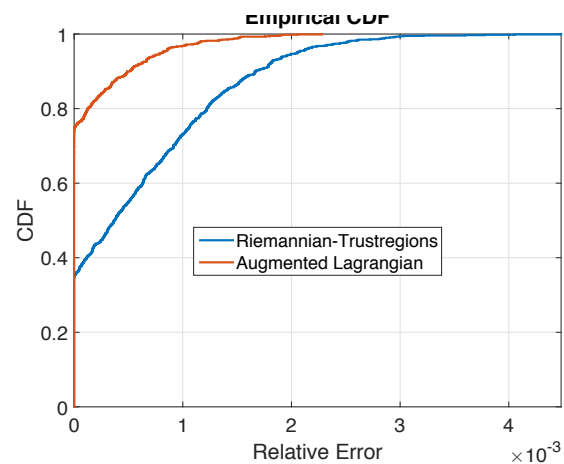}}
\caption{The empirical cumulative distribution functions of the relative errors of two algorithms based on augmented Lagrangian and Riemannian trust-region methods.}
\label{fig_best_rank1}
\end{figure}

Fig.~\ref{fig_best_rank1} shows the empirical cumulative distribution functions of 1000 relative errors. The results indicate that our algorithm based on the quadratic optimisation over sphere achieves a higher success rate. For example, for the case when $I = 20$, our algorithm attains an error less than 0.001 with a rate of 96.8\%, whereas the trust-region solver achieves a rate of 73.1\% for the same error range.
When $I = 10$, the Augmented Lagrangian algorithm has a success
rate of 92.5\% for a similar accuracy of $0.001$, while the trust-region algorithm has a quite low rate of $47.4\%$.

\section{Generalized Eigenvalue Decomposition with Eigen matrix of low rank structure}\label{sec:TT_geig}

We now address a constrained generalised eigenvalue decomposition which exploits the QCQP to derive an algorithm. The considered problem is stated below.

\begin{definition}[GEVD with eigen matrix having a low rank structure]\label{prob_gevd_lowrank_matrix}
Consider a positive semi-definite matrix $\bQ$ of size $IJ \times IJ$  and 
a positive definite matrix $\bS$ of size $IJ \times IJ$.
We solve the following optimisation problem 
\be
&\min  \quad  & \tr(\, \bX^T \, \bQ \, \bX)  \label{equ_gevd_lr} \quad
\mathrm{{s.t.}}  \quad   \bX^T  \, \bB \, \bX = \bI_{R} \notag 
\ee
to find a matrix $\bX$ of size $IJ \times R$, where each column of $\bX$  is a vectorisation of a product of two matrices 
\be
	\bx_r = \vtr{\bG_{r} \, \bA^T} \label{equ_xr_GrA}
\ee
and $\bG_r$ are matrices of size $I \times S$ and $\bA$ is of size $J \times S$.
\end{definition}

If we concatenate the matrices $\bG_r$ into an order-3 tensor of size $I \times S \times R$, the factor matrix $\bX$ is a mode-(1,2) matricization of an order-3 tensor $\tX$ of size $I \times J \times R$, defined as 
\be
	\tX = \tG  \, \times_2 \bA\, .	\notag
\ee
Because of scaling and rotation ambiguities, the matrix $\bA$ can always be normalized to be an orthogonal matrix. However, we do not exploit the orthogonality constraint on $\bA$ in its estimation, but perform orthogonal normalisation after each update. 
With the above interpretation, the matrix of eigenvectors, $\bX$, is considered a block matrix in the tensor train format of only two cores. 
For the GEVD in which the matrix $\bX$ is a block TT-matrix composed from more cores, the problem in (\ref{prob_gevd_lowrank_matrix}) becomes a local problem in an alternating algorithm to estimate the cores.
The problem in (\ref{equ_gevd_lr}) is a constrained GEVD.

For this simple case, we can express the factor matrix as $\bX = [\tX]_{(1,2)} = [\tX]_{(3)}^T$, and
\be
	\bX = (\bA \otimes \bI_I) \bG \label{equ_X_vG}
\ee
where $\bG = [\vtr{\bG_1},\ldots, \vtr{\bG_R}]$ and 
\be
	\bX = \left[ \bI_J \otimes \bG_{1}, \ldots, \bI_J \otimes \bG_{R}\right] \left(\bI_R \otimes \vtr{\bA^T}\right)   \label{equ_X_vA}\,.	\notag
\ee

We now show that $\tG$ can be estimated using a GEVD, and $\bA$ is a solution to a quadratic programming problem with quadratic constraints.
Our proposed algorithm alternates the estimation of $\tG$ and $\bA$.

\subsection{Update of $\bG_r$}\label{sec:update_Gr}

By exploiting the expression in (\ref{equ_X_vG}), while fixing the matrix $\bA$, we can find $\bG$ in a GEVD, given by
\be
	&\min  \quad  & \tr(\, \bG^T \, \bQ_G \, \bG)  \notag \label{equ_gevd_G} \quad 
	\text{s.t.}  \quad \bG^T  \, \bB_G \, \bG = \bI_{R} \notag 
\ee
where
\be
	\bQ_G &=& (\bA^T \otimes \bI_I) \,  \bQ \,  (\bA \otimes \bI_I)\,,\notag  \\
	\bB_G &=& (\bA^T \otimes \bI_I)  \, \bB  \, (\bA \otimes \bI_I).\notag 
\ee

\subsection{Update of $\bA$}\label{sec:update_A}

In order to derive the update rule for $\bA$, from (\ref{equ_X_vA}), we can rewritte the objective function as 
\be
	\tr(\bX^T \bQ \bX) &=&  \sum_{r = 1}^{R} \, \bx_r^T \bQ \, \bx_r \notag \\	
		&=& \sum_{r=1}^{R}  \vtr{\bA^T}^T \, ( \bI_J \otimes \bG_{r}) ^T \bQ \, ( \bI_J \otimes \bG_{r})  \vtr{\bA^T}  \notag \\
		&=&  \vtr{\bA^T}^T \, \bQ_A \,  \vtr{\bA^T} \label{eq_obj_gevd_A}
\ee
where 
\be
	\bQ_A &=& \sum_{r = 1}^{R} \, ( \bI_J \otimes \bG_{r}) ^T \bQ \, ( \bI_J \otimes \bG_{r})  \, .	\notag 
\ee
Similarly, the quadratic constraint  is rewritten for each pair of columns $\bx_r$ and $\bx_s$ as
\be
	\delta_{r,s}  &=& \bx_r ^T \bB \bx_s  \notag \\
		  &=&  \vtr{\bA^T}^T \, ( \bI_J \otimes \bG_{r}) ^T \bB \, ( \bI_J \otimes \bG_{s})  \vtr{\bA^T}   \notag \\
		  &=&  \vtr{\bA^T}^T \,   \bB_{r,s} \,  \vtr{\bA^T}   \,. \label{equ_constraint_A}
\ee
for $r, s  = 1, \ldots, R$, where 
\be
	\bB_{r,s} &=& ( \bI_J \otimes \bG_{r}) ^T \bB \, ( \bI_J \otimes \bG_{s})\, \label{eq_Brs}\, .
\ee
As a result of (\ref{eq_obj_gevd_A}) and (\ref{equ_constraint_A}), the vector $\vtr{\bA^T}$ is a minimiser of a quadratic programming problem with $R(R+1)/2$ quadratic constraints  
\be
	&\min \quad &\vtr{\bA^T}^T \bQ_A \, \vtr{\bA^T} 	\notag \\
	&\text{subject to} \quad &    \vtr{\bA^T}^T \, \bB_{r,r}\, \vtr{\bA^T} = 1,\quad  r = 1, \ldots, R\notag \\
	& &  \vtr{\bA^T}^T \, \bB_{r,s} \,  \vtr{\bA^T} = 0,\quad  1\le  s < r \le R\, . \notag 
\ee
Following the method in Section~\ref{sec:gen_psdmatrix}, we can generate a matrix $\bB$ with a low condition number from the matrices $\bB_{r,s}$, or choose a matrix with the smallest condition number among them, e.g., $\bB_{1,1}$. Denote by $\bF$ the factor matrix in the Cholesky decomposition of the matrix $\bB_{1,1} = \bF \bF^T$,
and introduce the following symmetric matrices of size $JS \times JS$
\be
	\bD_{r,r}  &=&  \bF^{-1} ( \bB_{1,1} - \bB_{r,r}) \bF^{-1 \, T} 	,  \qquad  r = 2, \ldots, R  \notag\\
	\bD_{r,s}  &=& \bF^{-1} (\bB_{r,s} + \bB_{r,s}^T ) \bF^{-1\, T}	,  \qquad  1\le  s < r \le R \notag
\ee
and 
\be
\tilde{\bQ}_A = \bF^{-1} \,  {\bQ}_A \, \bF^{-1 \, T}. \notag
\ee
We can then find $\ba = \bF^T \vtr{\bA^T}$ in the following constrained optimization based on Algorithm~\ref{alg_qpqc}
\be
	&\min \quad &\ba^T  \, \tilde{\bQ}_A \, \ba 	 \notag \label{eq_qpqc_a}\\
	&\text{subject to} \quad &    \ba^T  \ba = 1,   \notag\\
	& &  \ba^T \, \bD_{r,r} \,  \ba = 0,\quad  r = 2,\ldots, R, \notag\\
	& &  \ba^T \, \bD_{r,s} \,  \ba = 0,\quad   1 \le  s  <  r \le R\, .  \notag
\ee

\begin{example}{(\bf Discriminant analysis of hand-written digits)}\label{ex_digit_classify}
\end{example}
In this example, we illustrate an application of the proposed algorithm for solving the constrained GEVD in (\ref{equ_gevd_lr}).
More specifically, we perform a discriminant analysis on the training samples, which comprise handwritten images for digits 0, 1 and 2.
The data is taken from the MNIST dataset. Images are of size $28 \times 28$, and their Gabor features are computed for 8 orientations and 4 scales. 
The Gabor images are scaled down to size $16 \times 16$, then vectorized and concatenated into a matrix of size $256 \times 32$.
All digit images construct an order-3 tensor of size $256 \times 32 \times 900$, 300 images for each digit.
Ten random $10$-fold cross-validations are performed on 900 samples: 810 for training and 90 for the test set.

Denote the matrix of training samples by $\bY_{tr}$, which is of size $8192 \times 810$. We seek a projection matrix $\bX$ of size $8192 \times 2$ to extract 2 feature vectors $\bF = \bY_{tr}^T \,  \bX$
which maximises the Fisher score, a ratio of  the between and with-in distances
\be
\max \quad \frac{ \tr(\, \bF^T   \, \bS_b \, \bF)}{\tr(\bF^T  \, \bS_w \, \bF)}	\notag
\ee
where $\bS_b$ and $\bS_w$ are the between and with-in scattering matrices constructed for the training samples. 
Alternating to the maximisation of the trace-ratio, we solve the GEVD problem 
\be
&\max  \quad  & \tr(\, \bX^T \, \bY_{tr}  \, \bS_b \, \bY_{tr}^T \, \bX)  \, , \notag \label{equ_gevd_da} \\
&\text{subject to}  \quad & \bX^T  \, \bY_{tr}  \, \bS_w \, \bY_{tr}^T \, \bX = \bI_{2} \notag .
\ee

Because each digit is represented by a vector of length 8192 ($=256 \times 32$), which exceeds the number training samples of 810, the above ordinary linear discriminant analysis often leads to over-fitting, and it is not applicable. 
To this end, we apply the constrained GEVD in (\ref{equ_gevd_lr}).   
Columns of the matrix $\bX$ are constrained with a structure
\be
	\bx_r = \vtr{\bG_{r} \, \bA^T} \notag \label{equ_xr_GrA_da} \,.
\ee
In our example, $\bG_1$ and $\bG_2$ are of size $256 \times 2$ and $\bA$ is of size $32 \times 2$.
The extracted features for training samples are computed and used to train a simple LDA classifier. 
Using the proposed algorithm, we obtain a classification accuracy of 97.36\% averaged over $10 \times 10$-fold cross-validations.

Fig~\ref{fig_da_twoblocks} shows the scatter plot of samples plotted using the two feature vectors, demonstrating that digits 0, 1 and 2 are  distinguished.

\begin{figure}
\centering
\includegraphics[width=.48\textwidth, trim = 0.0cm .5cm 0cm 0cm,clip=true]{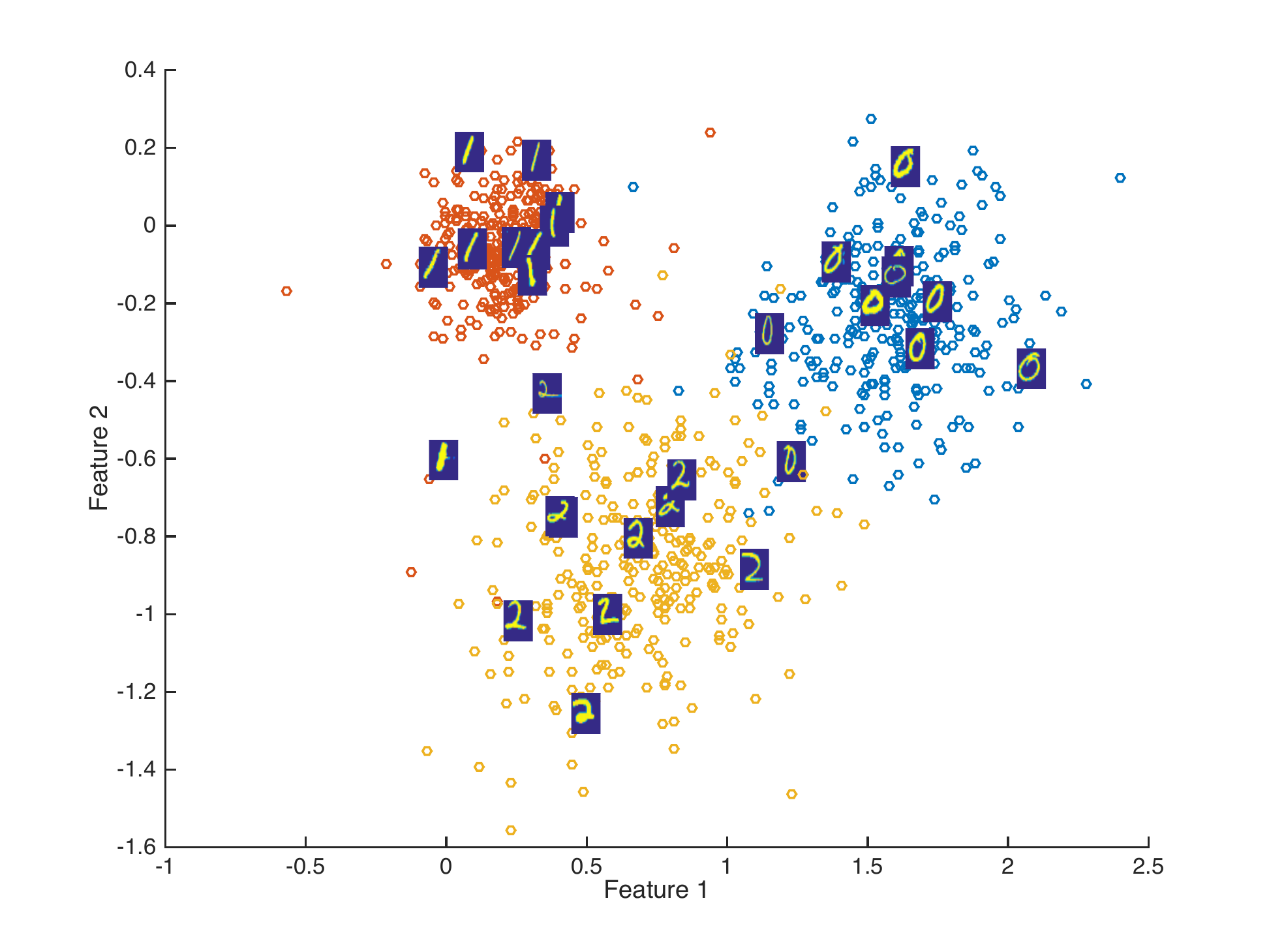}
\caption{Scattering plot of hand-written digits using two extracted features in Example~\ref{ex_digit_classify}.}
\label{fig_da_twoblocks}
\end{figure}

%
%
%
%

%

\section{Conclusions}\label{sec:conclusion}

We have introduced a robust solution to the SCQP problem by imposing 
an error bound on the root of the underlying secular equation. 
The method has been initially derived as SCQP for matrix variate data, together with  the related linear regression with an error bound constraint.
In addition, we proposed an algorithm for the QCQP problem which treats QCQP as  SCQP and an orthogonal projection. In the process, the quadratic term within the quadratic constraint is correctted by a term with a minimum condition number. This correction method has been shown to improve the convergence of the proposed algorithm. Applications of the SCQP and QCQP have been presented for image deconvolution, tensor decomposition and constrained GEVD.

\appendix
\section{Proof of Lemma~\ref{lem_sqp_simplify}}
\label{sec:proof_lem_sqp_simplify}
\begin{proof}
We consider a simple case when some eigenvalues are identical, e.g., $s_1 = s_2 = \cdots = s_L < s_{L+1} < \cdots < s_K$.
If $\bc_{1:L}$ are all zeros, the objective function is independent of $\tilde{\bx}_{1:L} = [\tilde{x}_1, \tilde{x}_2 \ldots, \tilde{x}_L]$. Hence, $\tilde{\bx}_{1:L}$ can be any point on the ball $\|\tilde{\bx}_{1:L}\|^2 = d^2 = 1 - \sum_{k = L+1}^{K} \tilde{x}_{k}^2$.
Otherwise, $\tilde{\bx}_{1:L}$ is a minimiser to a constrained linear programming while fixing the other parameters $\tilde{x}_{L+1}, \ldots, \tilde{x}_K$, in the form
\be
	\min \quad  \bc_{1:L}^T \, \tilde{\bx}_{1:L} \quad \text{subject to} \quad   \|\tilde{\bx}_{1:L}\| = d  		\notag
\ee
which yields 
\be	
\tilde{\bx}_{1:L}= \frac{-d}{\|\bc_{1:L}\|}  \bc_{1:L}\,.	\notag
\ee
For both cases, we can define  
\be
\bz &=& [-d, \tilde{x}_{L+1}, \ldots, \tilde{x}_{K}] ,\notag \\
\tilde{\bc} &=& [\|\bc_{1:L}\|, c_{L+1}, \ldots, c_{K}], \notag \\
\tilde{\bs} &=& [s_1, s_{L+1}, \ldots, s_K], \notag 
\ee
and perform a reparameterization to estimate $\bz$ from a similar SCQP but with distinct eigenvalues $\tilde{\bs}$, as
\be
&\min  \quad  & \frac{1}{2} \, \bz^T \, \diag(\tilde{\bs} )  \, {\bz} + \tilde\bc^T {\bz}\quad 
\text{subject to}  \quad   {\bz}^T {\bz} = 1 \notag \,.
\ee
\end{proof}

\section{Proof of Lemma~\ref{lem_root_lessthan1}}\label{sec:proof_lem_root_lessthan1}
\begin{proof}
It follows from the second derivative of $f(\lambda)$, given by
\be
f^{''}(\lambda) = - 2 \, \sum_{k} \frac{c_k^2}{(s_k - \lambda)^3}		\notag
\ee
that $f^{''}(\lambda)  < 0$ for all $\lambda < s_1 = 1$. That is, 
$f^{\prime}(\lambda)$ monotonically decreases with $\lambda <  s_1 = 1$.

In addition, since $s_k \ge 1$, for all $k$, we have 
\be
f^{\prime}(0) = 1 - \sum_{k = 1}^{K} \frac{c_k^2}{s_k^2}  \ge 1 - \sum_{k = 1}^{K} c_k^2  = 0		\notag 
\ee
and 
\be
f^{\prime}(1-|c_1|) &=& 1 - \sum_{k = 1}^{K} \frac{c_k^2}{(1-|c_1| - s_k)^2}  =  1 - \frac{c_1^2}{c_1^2} - \sum_{k = 2}^{K} \frac{c_k^2}{(1-|c_1| - s_k)^2}  \notag\\
&\le & - \sum_{k = 2}^{K} \frac{c_k^2}{(1-|c_1| - s_k)^2}   
\le 0 \, . 	\notag
\ee
This implies that $f^{\prime}(\lambda)$ has a unique root smaller than $1$. Moreover, the root lies in the interval $[0,1-|c_1|)$.
%
%
\end{proof}

\section{Proof of Lemma~\ref{lem_minimumroot}}
\label{sec:proof_lem_minimumroot}
\begin{proof}
Let $\lambda_1$ be a root which is smaller than $s_1 = 1$, and 
$\lambda_2$ be another root of $f^{\prime}(\lambda) $. Then, according to Lemma~\ref{lem_root_lessthan1}, $\lambda_2 > 1 > \lambda_1$, and 
\be
\sum_{k = 1}^{K} \frac{c_k^2}{(\lambda_1 - s_k)^2} = \sum_{k = 1}^{K} \frac{c_k^2}{(\lambda_2 - s_k)^2} = 1\,. \label{equ_lambda12_fzero}
\ee
It can be shown that 
\be
f(\lambda_2) - f(\lambda_1) &=& \lambda_2 - \lambda_1 + \sum_{k = 1}^{K} \frac{c_k^2}{\lambda_2 - s_k} - \frac{c_k^2}{\lambda_1 - s_k} \notag\\
&=& (\lambda_2 - \lambda_1) \left( 1 -  \sum_{k = 1}^{K} \frac{|c_k|}{s_k-\lambda_1} \frac{|c_k|}{s_k-\lambda_2}\right)  \notag  \\
&\ge &(\lambda_2 - \lambda_1)  \left( 1 - \sqrt{\sum_{k = 1}^{K} \frac{c_k^2}{(\lambda_1 - s_k)^2}}  \, \sqrt{ \sum_{k = 1}^{K} \frac{c_k^2}{(\lambda_2 - s_k)^2}} \right) \notag 
 \\
&=& (\lambda_2 - \lambda_1)  ( 1 - 1 \times 1) = 0.\label{eq_flambda1_2_b}
\ee
This inequality is obtained by applying the Cauchy-Schwarz inequality, whereas (\ref{eq_flambda1_2_b}) is obtained after replacing the optimal conditions in (\ref{equ_lambda12_fzero}).
The equality case does not occur because of $s_1 -\lambda_2 < 0$,
that is, $f(\lambda_2) > f(\lambda_1)$ 
and the minimiser $\lambda^{\star}$ of $f(\lambda)$ is the minimum root  $\lambda_1$ of $f^{\prime}(\lambda)$.
\end{proof}

\section{Proof of Lemma~\ref{lem_bound_lda_1term}}
\label{sec:proof_lem_bound_lda_1term}

\begin{proof}
We first show that the polynomials $p_i(t)$ have unique roots in $\displaystyle \left[{|c_1|},1 \right]$.
The second-derivative of $p_i(t)$ is given by 
\be
p_i^{\prime\prime}(t) = 12t^2 +12d_i t + 2(d_i^2-1) 	\notag
\ee
and has two roots $\bar{t}_{1,2}  = \displaystyle \frac{-3d_i \mp \sqrt{3d_i^2 + 6}}{6}$. 

If $d_i > 1$, 
the roots $\bar{t}_{1,2}$ are negative. Hence, the first derivative $p_i^{\prime}(t)$ monotonically increases in $[0, +\infty)$. In addition, since  
\be
p_i^{\prime}(0) = - 2c_1^2 d_i  \le 0		\notag 
\ee
 $p_i^{\prime}(t)$ has only one root in $[0, +\infty)$. 
 Together with the fact that 
\be
p_i(0) &=& -c_1^2 d_i^2 \le 0\notag \\
p_i(|c_1|) &=&  c_1^2 (c_1^2 - 1) \, \le  \, 0 \notag\\
p_i(1) &=& d_i(d_i+2)  (1 -  c_1^2)  \ge 0, \notag 
\ee
the polynomial $p_i(t)$ has a unique root in $\displaystyle \left[{|c_1|},1 \right]$.

If $d_i\le1$, the second root $\bar{t}_2$ is non-negative, $\bar{t}_2 \ge 0$. However, since $p_i^{\prime}(0) = - 2c_1^2 d_i \le 0$,  the first derivative $p_i^{\prime}(t)$  has only one root in $[0, +\infty)$. Again as for the case $d_1>1$, the polynomial $p_i(t)$ also has unique root in $\displaystyle \left[{|c_1|},1 \right]$.

As the definition of the root $t_2$, we can prove that derivative $f^{\prime}(s_1  - t_2)$ does not exceed zero, that is
\be
f^{\prime}(s_1-t_2) &=& 1 - \frac{c_1^2}{t_2^2} - \sum_{k = 2}^{K} \frac{c_k^2}{(s_k-s_1 + t_2)^2} 
\le 1 - \frac{c_1^2}{t_2^2} - \frac{\sum_{k = 2}^{K}  c_k^2}{(s_K-s_1 + t_2)^2}   
\notag\\
&=& 1 - \frac{c_1^2}{t_2^2} - \frac{1 - c_1^2}{(d_2 + t_2)^2}  
= \frac{p_2(t_2)}{t_2^2 (d_2 + t_2)^2} \notag \\
&=& 0. \label{equ_t1}
\ee
Similarly,  we have 
\be
f^{\prime}(s_1-t_1) &=& 1 - \frac{c_1^2}{t_1^2} - \sum_{k = 2}^{K} \frac{c_k^2}{(s_k-s_1 + t_1)^2}  
\ge  1 - \frac{c_1^2}{t_1^2} - \frac{\sum_{k = 2}^{K}  c_k^2}{(d_1 + t_1)^2} \notag\\
&=& 1 - \frac{c_1^2}{t_1^2} - \frac{1 - c_1^2}{(d_1 + t_1)^2}  
= \frac{p_1(t_1)}{t_1^2 (d_1 + t_1)^2} \notag \\
&=& 0.\label{equ_t2}
\ee
From (\ref{equ_t1}) and (\ref{equ_t2}), it follows that $f^{\prime}(t)$ has a root  in $[1 - t_1, 1-t_2]$. This root is unique and also the global  minimiser of $f(\lambda)$ in (\ref{func_lambda}).
This completes the proof. 
\end{proof}

\section{Proof of Lemma~\ref{lem_bound_lda_Lterms}}
\label{sec:proof_lem_bound_lda_Lterms}

\begin{proof}
First, similar to Lemma~\ref{lem_root_lessthan1}, the roots $\lambda_{l,L}^{\star}$ and $\lambda_{u,L}^{\star}$  are unique in the interval $[0, 1-|c_1|]$.
Taking into account that  $\sum_{k = 1}^{K} c_k^2 = 1$, and $s_K \ge s_k$ for all $k$, we have 
\be
f^{\prime}(\lambda) &=&  1 - \sum_{l = 1}^{L} \frac{c_l^2}{(s_l - \lambda)^2}  -  \sum_{k = L+1}^{K} \frac{c_k^2}{(s_k- \lambda)^2}  \notag \\
&\le &  1 - \sum_{l = 1}^{L} \frac{c_l^2}{(s_l - \lambda)^2}  -  \frac{\sum_{k = L+1}^{K}   c_k^2}{(s_K- \lambda)^2}  =  1 - \sum_{l = 1}^{L} \frac{c_l^2}{(s_l - \lambda)^2}  -  \frac{\tilde{c}_{L+1}^2}{(s_K- \lambda)^2}  \notag \\
&=& f^{(L)}_{u}(\lambda) \, .	\notag 
\ee
Similarly, we can derive 
$f^{\prime}(\lambda)  \ge  f^{(L)}_{l}(\lambda)$.  
It appears that the function values of $f^{\prime}(\lambda)$ at $\lambda_{l,L}^{\star}$ and $\lambda_{u,L}^{\star}$ are nonnegative and non-positive, respectively, 
\be
	f^{\prime}(\lambda_{l,L}^{\star}) &\ge& 	f^{(L)}_{l}(\lambda_{l,L}^{\star}) = 0  \, ,\notag \\
	f^{\prime}(\lambda_{u,L}^{\star}) &\le& 	f^{(L)}_{u}(\lambda_{u,L}^{\star}) = 0,\notag 
\ee
thus implying that
\be
	\lambda_{l,L}^{\star} \le \lambda^{\star} \, \le \lambda_{u,L}^{\star}.  \notag 
\ee
The sequence of inequalities in (\ref{equ_sequence_lambda}) can be proved in a similar way.
\end{proof}

\section{Proof of Lemma~\ref{lem_cn_zero}}
\label{sec:proof_lem_cn_zero}

\begin{proof}
By contradiction, assume that the variable $\tilde{x}_n^{\star}$ is non-zero.
%
%
Since there is only one $c_n = 0$, from (\ref{equ_1st_opt_condtion}), the multiplier $\lambda^{\star}$ must be equal to $s_n$, that is
\be
\lambda^{\star} = s_n,	\notag 
\ee
and the minimiser $\tilde{\bx}^{\star}$ is given by
\be
	\tilde{x}_{k}^{\star} &=& \frac{c_k}{s_n - s_k}  \, ,\quad k \neq n	\notag 
\ee
and $\tilde{x}_{n}^{\star}$ is derived from the unit-length condition of $\tilde{\bx}^{\star}$
\be
\tilde{x}_{n}^2 = 1 - \sum_{k\neq n} \tilde{x}_{k}^2 		\notag 
\ee
with an additional assumption that 
\be
	\sum_{k\neq n} \frac{c_k^2}{(s_n-s_k)^2} < 1\, .	\notag 
\ee



The objective function in (\ref{equ_qp_sphere2}) at ${\tilde\bx}^{\star}$, as well as the Lagrangian function at $({\tilde\bx}^{\star}, \lambda^{\star} = s_n)$ are given by 
\be
\calL({\tilde\bx}^{\star}, \lambda^{\star})   
&=&   \frac{1}{2} \left(s_n -  s_n \, \sum_{k \neq n} \frac{c_k^2 }{(s_n - s_k)^2} + \sum_{k \neq n} \frac{c_k^2 \, s_k}{(s_n - s_k)^2} \right)  + \sum_{k \neq n}   \frac{c_k^2 }{s_n - s_k} \notag\\
&=& \frac{1}{2}\left( s_n  + \sum_{k \neq n} \frac{c_k^2}{s_n - s_k} \right) \, . \label{equ_lagrang_xtlda}
\ee
Now, we consider a vector $\bar{\bx}$ whose $n$-th entry is zero, $\bar{x}_n = 0$, and the rest $(K-1)$ coefficients $\bar{\bx}_{n} = [\bar{x}_1, \ldots, \bar{x}_{n-1}, \bar{x}_{n+1},\ldots, \bar{x}_K]$ are minimiser to a reduced problem 
\be
&\min  \quad & \frac{1}{2} \sum_{k \neq n} s_k \, \tilde{x}_k^2   + \sum_{k \neq n}   c_k \, \tilde{x}_k    \notag \label{equ_qp_sphere2_reduced} \\
&\text{subject to}  \quad & \sum_{k \neq n}  \tilde{x}_k^2 = 1 \, .\notag 
\ee
According to the results in Section~\ref{sec::nonzero_cn}, when $c_k$, $k \neq n$, are non-zeros,
the Lagrangian function for this reduced problem at the minimiser $\bar{\bx}_{n}$ is given by
\be
\calL_n(\bar{\bx}_{n}, \lambda_n^{\star})   = \frac{1}{2}\left( \lambda_n^{\star}  +  \sum_{k \neq n}  \frac{c_k^2}{\lambda_n^{\star} - s_k} \right) \,,
\label{equ_lagrang_xbar}
\ee
where the optimal multiplier $\lambda_n^{\star} < s_1 = 1$. 
From (\ref{equ_lagrang_xtlda}) and (\ref{equ_lagrang_xbar}), it is apparent that 
\be
	\calL({\tilde\bx}^{\star}, \lambda^{\star})   >   \calL_n(\bar{\bx}_{n}, \lambda_n^{\star})  = \calL({\bar\bx}, \lambda^{\star}_n) \,,	\notag 
\ee
which contradicts with the claim that $\tilde{\bx}^{\star}$ is the minimiser to the problem (\ref{equ_qp_sphere2}).
This implies that the $n$-th variable of the minimiser must be zero, i.e., $\tilde{x}_n^{\star} = 0.$
\end{proof}

\section{Proof of Lemma~\ref{lem_c1_zero}}
\label{sec:proof_lem_c1_zero}

\begin{proof}
When $c_1 = 0$, from the first optimality condition in (\ref{equ_1st_opt_condtion}), 
we have 
\be	
	(s_1 - \lambda) \, \tilde{x}_1 =  0\, .	\notag 
\ee
Assume that $\tilde{\bx}^{\star}$ is a minimiser to the problem in (\ref{equ_qp_sphere2}) with a non-zero $\tilde{x}_1^{\star}$, then $\lambda = s_1 = 1$
and 
\be	
	\tilde{x}_k^{\star} = \frac{c_k}{\lambda-s_k} = \frac{c_k}{1-s_k} \notag 
\ee
for $k >1$.
From the unit-length constraint, it follows that $\tilde{x}_1$ can be deduced as 
\be
{(\tilde{x}_1^{\star})}^2 =  1- \sum_{k>1} ({\tilde{x}_k^{\star}})^2  = 1 - d		\notag 
\ee
which requires the condition $ 
d \le 1$.
Implying that, if $ d >1$, $\tilde{x}_1^{\star}$ must be zero, and the rest $(K-1)$ variables $[\tilde{x}_2^{\star},\ldots, \tilde{x}_K^{\star}]$ are minimiser to the reduced problem of (\ref{equ_qp_sphere2}).

When $d\le1$, there exists $\tilde{x}_1^{\star}$, and 
the objective function at $\tilde{\bx}^{\star}$ is given by
\be
\calL(\tilde{\bx}^{\star},s_1) = \frac{1}{2}\left(1 - \sum_{k>2} \frac{c_k^2}{s_k - 1}\right) \, .
\notag 
\ee

Now, we consider a vector $\tilde{\bx}$ whose $\tilde{x}_1 = 0$, and 
$\bar{\bx}  = [\tilde{x}_2, \ldots, \tilde{x}_K]$ is a minimiser to the reduced problem (\ref{prob_x_2_K}).  
Similar to the analysis in Section~\ref{sec::nonzero_cn}, the objective function of the reduced problem (\ref{prob_x_2_K}) 
achieves a global minimum at the minimum root $\bar{\lambda}$ of the first derivative of the Lagrangian function
\be
\calL_1(\bar{\bx},\bar{\lambda}) =  \frac{1}{2}\left(\bar{\lambda} - \sum_{k>2} \frac{c_k^2}{ s_k - \bar{\lambda}} \right)\, ,		\notag 
\ee
where $\bar{\lambda}$ is smaller than $s_2$.

Since the second derivative of $\calL_1(\bar{\bx},\lambda)$ w.r.t. $\lambda$ is negative for all $\lambda < s_2$, the function $\calL_1(\bar{\bx},\lambda)$ is concave in $(-\infty, s_2)$. It then follows that 
\be
\calL(\tilde{\bx}^{\star},s_1) < \calL_1(\bar{\bx},\bar{\lambda})\, ,		\notag 
\ee 
and $\tilde{\bx}^{\star}$ is the global minimiser.
Note that $\tilde{x}_1^{\star}$ can be $\sqrt{1-d}$ or $-\sqrt{1-d}$.

\end{proof}

\section{Proof of Lemma~\ref{lem_linreg_circle}}
\label{sec:proof_lem_linreg_circle}

\begin{proof} 
 Let  $\bx^{\star}$ be a minimiser to the problem (\ref{eq_linreg_bound})
  \be
 	\bx^{\star} = \argmin_{\bx} \quad \|\bx\|^2 \quad \text{s.t.} \quad \|\by - \bA \bx\| \le \delta .	\notag
 \ee
It is obvious that if there are zero entries in $\bx^{\star}$, we can omit columns of $\bA$ corresponding to these entries, and the regression problem formulated for the remaining sub matrix of $\bA$ has a non-zero minimiser. 
 Hence, we can assume that entries of $\bx^{\star}$ are nonzeros.
Let $\bz = \by - \sum_{k = 2}^{K} \ba_k x_k^{\star} $, then $x_1^{\star}$ is a minimiser to the optimisation w.r.t. $x_1$, that is
\be
x_1^{\star}  = \argmin_{x_1}   \quad x_1^2  \quad \text{s.t.} \quad \|\bz - \ba_1 x_1 \| \le \delta  \label{eq_prob_x1}.
\ee
The constraint function can be written as 
\be
c(x_1) =  \|\bz - \ba_1 x_1\|^2 - \delta^2 =   \|\ba_1\|^2 \, x_1^2 - 2 (\ba_1^T \bz)  \, x_1  + \|\bz\|^2 - \delta^2.	\notag
 \ee
 Since $c(x_1^{\star}) \le 0$, $c(x_1)$ must have two roots $t_{-}$ and $t_{+}$. Moreover, it is clear from (\ref{eq_prob_x1}) that $\|\bz\|^2  > \delta^2$ otherwise $x_1^{\star} = 0$. Hence, the two roots $t_{-}$ and $t_{+}$ have the same signs because 
 \be
 t_{-} t_{+} = \frac{\|\bz\|^2 - \delta^2}{\|\ba_1\|^2} > 0\, .		\notag
 \ee 
As a result, the minimiser  to (\ref{eq_prob_x1}) must be one of the two roots, $x_1^{\star} = \min(|t_{-}|,|t_{+}|)$, 
and the inequality condition becomes the equality one.
\end{proof}


\bibliographystyle{spmpsci}      


\end{document}